\documentclass[a4, 12pt, reqno]{amsart}

\pdfoutput=1 

\usepackage[latin1]{inputenc}
\usepackage{amsxtra}
\usepackage{amsmath}
\usepackage{amscd}
\usepackage{amssymb}
\usepackage{amsfonts}
\usepackage[all]{xy}
\usepackage{mathrsfs}
\usepackage{amsthm}
\usepackage{color}
\usepackage{upgreek}
\usepackage{verbatim}
\usepackage{enumerate}
\usepackage{latexsym}
\usepackage{graphicx}
\usepackage{MnSymbol}
\usepackage{todonotes}
\usepackage{setspace}
\usepackage{hyperref}
\usepackage{stmaryrd}
\usepackage{nicefrac}
\usepackage{euscript}

\usepackage{amsthm,amsbsy,amsfonts,amssymb,amscd}
\usepackage{tikz}
\usetikzlibrary{calc,intersections,through, backgrounds,arrows,decorations.pathmorphing,backgrounds,positioning,fit,petri} \usetikzlibrary{calc} \usepackage{tikz-cd} \usepackage[all]{xy} \xyoption{knot} \xyoption{poly}
\usepackage{mathtools} 
\usepackage{hyperref} 
\usepackage[margin=1in]{geometry} \usepackage{etoolbox}\usepackage{ytableau}\usepackage{youngtab}\makeatletter\pretocmd{\appendix}{\addtocontents{toc}{\protect\addvspace{10\p@}}}{}{}\makeatother

\usetikzlibrary{decorations.pathmorphing}

 \definecolor{darkgreen}{HTML}{336633}
 \definecolor{darkred}{HTML}{993333}
\makeatletter

\theoremstyle{plain}
\newtheorem{thm}{Theorem}[section]
\newtheorem{lemma}[thm]{Lemma}
\newtheorem{prop}[thm]{Proposition}

\newtheorem{cor}[thm]{Corollary}
\newtheorem{df-prop}[thm]{Definition-Proposition}

\theoremstyle{definition}
\newtheorem{df}[thm]{Definition}

\theoremstyle{remark}
\newtheorem{rem}[thm]{Remark}

\xyoption{all}
\textheight 44\baselineskip
\textwidth 17cm
\oddsidemargin -0.3cm
\evensidemargin -0.3cm


\def\frakg{\mathfrak{g}}

\def\frakp{\mathfrak{p}}


\def\vep{\varepsilon}


\def\End{\operatorname{End}\nolimits}

\def\tr{\operatorname{tr}\nolimits}

\def\frakgl{\mathfrak{gl}}
\def\gr{\operatorname{gr}}

\def\pn{\frakp (n)}
\def\ov{\overline}

\newcommand{\nc}{\newcommand}

\nc{\mf}{\mathfrak}
\nc{\mc}{\mathcal}
\nc{\VWd}{{\bigdoublevee}_d^{\mf p(n)}}
\nc{\Z}{{\mathbb Z}}
\nc{\C}{{\mathbb C}}
\nc{\N}{{\mathbb N}}
\nc{\bara} {{\bar{a}}}
\nc{\barb} {{\bar{b}}}
\nc{\bari} {{\bar{i}}}
\nc{\barj} {{\bar{j}}}
\nc{\barell} {{\bar{\ell}}}
\nc{\hZ}{{\underline{\mathbb Z}}}
\nc{\F}{{\mf F}}
\nc{\Q}{\mathbb {Q}}
\nc{\ep}{\delta}
\nc{\h}{\mathfrak h}
\nc{\n}{\mf n}
\nc{\A}{{\mf a}}
\nc{\G}{{\mathfrak g}}
\nc{\SG}{\overline{\mathfrak g}}
\nc{\DG}{\widetilde{\mathfrak g}}
\nc{\D}{\mc D}
\nc{\Li}{{\mc L}}
\nc{\La}{\Lambda}
\nc{\is}{{\mathbf i}}
\nc{\V}{\mf V}
\nc{\bi}{\bibitem}
\nc{\NS}{\mf N}
\nc{\dt}{\mathord{\hbox{${\frac{d}{d t}}$}}}
\nc{\E}{\EE}
\nc{\ba}{\tilde{\pa}}
\nc{\half}{\frac{1}{2}}
\nc{\hgl}{\widehat{\mathfrak{gl}}}
\nc{\gl}{{\mathfrak{gl}}}
\nc{\hz}{\hf+\Z}
\nc{\dinfty}{{\infty\vert\infty}}
\nc{\SLa}{\overline{\Lambda}}
\nc{\SF}{\overline{\mathfrak F}}
\nc{\SP}{\overline{\mathcal P}}
\nc{\U}{\mathfrak u}
\nc{\SU}{\overline{\mathfrak u}}
\nc{\wh}{\widehat}
\nc{\sL}{\ov{\mf{l}}}
\nc{\sP}{\ov{\mf{p}}}
\nc{\osp}{\mf{osp}}
\nc{\spo}{\mf{spo}}
\nc{\hosp}{\widehat{\mf{osp}}}
\nc{\hspo}{\widehat{\mf{spo}}}
\nc{\hh}{\widehat{\mf{h}}}
\nc{\even}{{\bar 0}}
\nc{\odd}{{\bar 1}}
\nc{\mscr}{\mathscr}
\nc{\Pd}{{\widehat{P}_d^-}}

\nc{\wti}{\widetilde}
\setcounter{tocdepth}{1}

\title[Affine Periplectic Brauer Algebras]{Affine Periplectic Brauer Algebras}

\author[Chen]{Chih-Whi Chen}
\address{Mathematics Division, National Center for Theoretical Sciences, Taipei, Taiwan 10617} \email{d00221002@ntu.edu.tw}

\author[Peng]{Yung-Ning Peng}
\address{Department of Mathematics, National Central University,
Chung-Li, Taiwan, 32054} \email{ynp@math.ncu.edu.tw}

\begin{document}
\begin{abstract}
  We formulate Nazarov-Wenzl type algebras $\Pd$ for the representation theory of the periplectic Lie superalgebras $\mf p(n)$.  We establish an Arakawa-Suzuki type functor to provide a connection between $\mf p(n)$-representations and $\Pd$-representations. We also consider various tensor product representations for $\Pd$.  The periplectic Brauer algebra $A_d$ developed by Moon is a quotient of $\Pd$. In particular, actions induced by Jucys-Murphy elements can also be recovered under the tensor product representation of $\Pd$. Moreover, a Poincare-Birkhoff-Witt type basis for $ \widehat{P}_d^-$ is obtained. A diagram realization of $\Pd$ is also obtained.
\end{abstract}

\numberwithin{equation}{section}

\maketitle


\let\thefootnote\relax\footnotetext{{\em Key words:} Periplectic Lie
superalgebras, marked Brauer category, periplectic Brauer algebra, affine Nazarov-Wenzl algebra, Arakawa-Suzuki type functor, Jucys-Murphy elements, affine periplectic Brauer algebra, Poincare-Birkhoff-Witt basis.}

\section{Introduction}
\subsection{}The  periplectic Lie superalgebra $\mf p(n)$ is a superanalogue of
the orthogonal or symplectic Lie algebra preserving an odd non-degenerate symmetric or skew-symmetric bilinear form. In the past three decades, there have been some related studies on $\mf p(n)$-representation theory, see, e.g., \cite{Sc}, \cite{PS}, \cite{Go}, \cite{Se1}, \cite{Ch}, \cite{Co}, \cite{B+9}, \cite{CE1} and \cite{CE2}. In particular, the finite-dimensional irreducible character problem has been solved in  \cite{B+9}. However, the representation theory of the periplectic Lie superalgebra is still not well-understood. One of the main reasons is that many classical and traditional methods in representation theory are not
applicable. In particular, the center of its universal enveloping algebra fails to provide
us with information about the blocks in the respective categories (cf. \cite{Go}).

\subsection{} In recent years, diagrammatically defined algebras have naturally appeared in numerous areas of mathematics. In particular,  their representation theory have recently aroused much interest. The very first exposition was studied by Brauer in his celebrated paper \cite{Br} where the Brauer algebras were formulated as  centralizers:
\begin{align*} Br_d(\delta) \longrightarrow \text{End}_G(V^{\otimes d}),
\end{align*} where $G \subset GL(V)$ is the orthogonal or symplectic group. The method of establishing a link between representation theories via diagram algebras has attracted considerable attention and offers new perspectives in representation theory.

 Brauer's theory has been known to admit a generalization in the $\Z_2$-graded setting (see, e.g., \cite{BSR}). In particular, Moon introduced and studied a Brauer type algebra $A_d$ in \cite{Mo} for the first time by giving generators and relations of $A_d$. Furthermore, the connections between tensor product representations of $\mf p(n)$ and $A_d$ were established. Moon used  Bergman's diamond Lemma to prove that the dimension of $A_d$ is identical to the Brauer algebra $Br_d(0)$ and noticed that the generators and relations of $A_d$ bear resemblance to $Br_d(0)$. Later on, Kujawa and Tharp provided certain diagrammatically defined algebras in \cite{KT}, which gave a uniform method to study the algebras defined by Brauer and Moon simultaneously.

In a recent article \cite{Co}, Coulembier studied the {\em periplectic Brauer algebra}, which is exactly the algebra $A_d$ discovered by Moon, for the invariant theory for $\mf p(n)$. As an application, the blocks in the category of finite dimensional weight modules over $\mf p(n)$ have been determined.

\subsection{} The {\em degenerate affine Nazarov-Wenzl algebra} $We(d)$, introduced in \cite[Section 4]{Na}, can be regarded as an affine analogue of the Brauer algebra $Br_d(0)$. In particular, $Br_d(0)$ is a homomorphic image of $We(d)$, where the Jucys-Murphy elements in $Br_d(0)$ are images of the polynomial generators in $We(d)$.

The main purpose of this paper is to study the affine version of periplectic Brauer algebras. We first formulate the definition of the {\em affine periplectic Brauer algebra} $\Pd$ by generators and relations, together with an action of $\Pd$ on $M\otimes V^{\otimes d}$, where $M$ is an arbitrary $\mf p(n)$-module and $V=\C^{n|n}$.
In particular, the construction allows us to define an Arakawa-Suzuki type functor $\mc F$ from the category of $\mf p(n)$-modules to the category of $\Pd$-modules; a similar approach appears also in \cite[Section 4]{B+9}, where the properties and applications of similar functors are studied.
In the second part, we establish a homomorphism from $\Pd$ to the periplectic Brauer algebra $A_d$. In particular, the Jucys-Murphy elements of $A_d$ given in \cite{Co} are precisely the images of certain polynomial generators of $\Pd$. Moreover, we give a Poincare-Birkhoff-Witt type basis for $\Pd$.
Finally we give a diagrammatic interpretation for $\Pd$, extending the result of \cite{Mo} and partially of \cite{KT}.

\subsection{}This paper is organized as follows. In Section~\ref{Section::glnnandpn}, some basic definition and notation for Lie superalgebras $\mf{gl}(n|n)$ and $\mf p(n)$ are introduced. Subsequently, we give an explicit construction of a Casimir-like element $C$.
The definition of the affine periplectic Brauer algebra $\Pd$ and a $\mf p(n)$-analogue of Arakawa-Suzuki type functor $\mc F$ are  established in Section~\ref{Section::AffinePBrandASThm}. Particularly, we evaluate the functor $\mc F$ at various $\mf p(n)$-modules in Section~\ref{Section::Tensors}. A Poincare-Birkhoff-Witt type basis for $\Pd$ is given in Section~\ref{Section::PBWbasis}. Finally, a diagram realization of $\Pd$ is established in Section~\ref{Section::diagram}.


\subsection{}
Throughout the paper, the symbols $\Z$, $\N$, and $\Z_+$ stand for the sets of all,
positive and non-negative integers, respectively. All vector spaces, algebras, tensor
products, are over the field of complex numbers $\C$.  For given non-negative integers $m,n$, we define a partial ordered set $I(m|n):= \{1<2<\cdots <m< \ov 1 <\ov 2 <\cdots <\ov n\}$.
For  a superspace $M = M_{\overline{0}}\oplus M_{\overline{1}}$ and a given homogenous element $m\in M_{\chi}$ ($\chi\in \mathbb{Z}_2$), we let $\ov{m}$= $\chi$.

\vskip 1cm
{\bf Acknowledgement.}
We are grateful to Weiqiang Wang and Shun-Jen Cheng for their suggestion which triggered this collaboration, and for the crucial questions/advice they have given during this project.
We thank Yongjie Wang for providing us with the references \cite{KT, Mo}.
Peng is partially supported by MOST grant 105-2628-M-008-004-MY4 and NCTS Young Theorist Award 2015.  At almost the same time that we posted the first version of the present article, Balagovic et al also posted \cite{B+9} on the arxiv, which contains results overlapping with ours. We are grateful to Jonathan Kujawa and Vera Serganova for communication.

\section{Lie superalgebras $\mf{gl}(m|n)$ and $\pn$} \label{Section::glnnandpn}

\subsection{Basic setting and notation for $\gl(n|n)$ and $\mf p(n)$}
Let $m,n\in\Z_+$. Let $\mathbb{C}^{m|n}$ be the complex superspace of superdimension $(m|n)$. The {\em general linear Lie superalgebra} $\mathfrak{gl}(m|n)$ may be realized as $(m+n) \times (m+n)$ complex matrices:
\begin{align} \label{glrealization}
 X=\left( \begin{array}{cc} A & B\\
C & D\\
\end{array} \right),
\end{align}
where $A,B,C$ and $D$ are respectively $m\times m, m\times n, n\times m, n\times n$ matrices. Let $\text{Str}: \mf{gl}(m|n) \rightarrow \C$ denote the supertrace function given by $\text{Str}(X)=\tr A-\tr D$.

In the rest of this paper, we fix $m=n$ and let $I:= I(n|n) = \{1,2, \cdots ,n ,\overline{1}, \overline{2},\cdots ,\overline{n}\}$ and let $I^0 := \{1,2, \cdots, n\}$.
Let $\{e_i\}_{i\in I}$ be the basis of the standard $\gl(n|n)$-representation $V:= \C^{n|n}$, where $(\mathbb{C}^{n|n})_{\bar{0}} =\sum_{i\in I^{0}}\mathbb{C}e_i$ and $(\mathbb{C}^{n|n})_{\bar{1}} =\sum_{i\in I \setminus I^{0}}\mathbb{C}e_i$. For each $\overline{i} \in I\backslash I^0$, we set $\overline{\overline{i}} =i$.   We equip $V$  with the odd bilinear form $(\cdot, \cdot)$ given by $(e_a, e_b) = \delta_{a,\ov b}$, for all $a,b \in I$.

Throughout this paper, let
$$\mathfrak{g}=\mathfrak{g}_{\overline{0}} \oplus
\mathfrak{g}_{\overline{1}}$$  denote the {\it periplectic Lie
superalgebra} $\mf p(n)$ defined in \cite{Ka}. Recall that $\mathfrak{g}$
admits a $\mathbb{Z}$-gradation
$\mathfrak{g}=\mathfrak{g}_{-1} \oplus \mathfrak{g}_0 \oplus
\mathfrak{g}_{+1}$ where $\mathfrak{g}_0 =
\mathfrak{g}_{\overline{0}} =\mathfrak{gl}(n)$,
$\mathfrak{g}_{-1} \cong \bigwedge^2(\mathbb{C}^{n*})$,
and $\mathfrak{g}_{+1} \cong S^{2}(\mathbb{C}^n)$ as
$\mathfrak{g}_0$-modules.
The standard matrix realization is given
by
\[\mf p(n)=
\left\{ \left( \begin{array}{cc} A & U\\
L & -A^t\\
\end{array} \right),\text{where $U$ is symmetric and $L$ is skew-symmetric} \right\} \subset  \mf{gl}(n|n).
\]
It is well-known that $\gl(n|n)\cong \frak{g}\oplus \frak{g}^*$ as $\G$-modules, and the $\mathbb{Z}$-gradation of $\mf p(n)$ is compatible to the $\mathbb{Z}_2$-grading of $\gl(n|n)$.

\subsection{A Casimir-like element}
As explained in the introduction of \cite{B+9}, one of the difficulties in the study of the representation theory of $\mf p(n)$ is the lack of quadratic Casimir elements in $U(\mf p(n))$. However, a key observation is that the embedding $\mf p(n)\subset \gl(n|n)\cong \mf g\oplus \mf g^*$ allows one to construct a Casimir-like element, see \cite[Section 4.1]{B+9}. As a result, some of the classical approaches can be applied after suitable modifications if necessary.

\begin{prop} \emph{(}\cite[Lemma 4.1.2]{B+9}, \cite{Co} \emph{)}\label{Prop::SC}
Let $\{x_i\} \cup \{x_i^*\} \subset \mf{gl}(n|n)$ be homogenous elements such that $\{x_i\}$ is a basis for $\frak{g}$ and $\{x_i^*\}$ is the dual basis for $\frak{g}^*$ with respect to the supertrace form given by $\emph{Str}(x_i^*x_j)=\delta_{ij}$. Let
 \begin{align} \label{ComesElement}
 C =\sum_{i=1}^{\emph{dim }\G} x_i \otimes x_i^* \in \frakg \otimes \frakgl(n|n).
 \end{align} Then for each $\mf g$-module $M$, we have
  $C\in \emph{End}_{\mf g}(M\otimes V)$.
\end{prop}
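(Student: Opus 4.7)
The plan is to show that $C \in \End_{\G}(M \otimes V)$ by establishing that $C$ is $\G$-invariant as an element of $\G \otimes \gl(n|n)$, where $\G$ acts by the adjoint action induced by the coproduct $y \mapsto y \otimes 1 + 1 \otimes y$. Since $\G$ acts diagonally on $M \otimes V$ via this same coproduct, and the two tensor factors of $C$ operate respectively on $M$ (via $x_i \in \G$) and on $V$ (via $x_i^* \in \gl(n|n) \supset \G^*$), such invariance directly yields $[y \otimes 1 + 1 \otimes y,\, C] = 0$ on $M \otimes V$ for every $y \in \G$.

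The conceptual argument then proceeds as follows. By the $\G$-module decomposition $\gl(n|n) \cong \G \oplus \G^*$ recalled just above the proposition, the element $C$ already lies in $\G \otimes \G^* \subset \G \otimes \gl(n|n)$. The supertrace form is invariant, i.e.\ it vanishes on supercommutators, so its restriction defines a $\G$-invariant non-degenerate pairing between $\G$ and $\G^*$. This produces $\G$-module isomorphisms $\G^* \cong \G^\vee$ and hence $\G \otimes \G^* \cong \G \otimes \G^\vee \cong \End(\G)$. Because $\{x_i^*\}$ is by construction the dual basis of $\{x_i\}$ with respect to $\text{Str}$, the element $C = \sum_i x_i \otimes x_i^*$ corresponds under this chain of isomorphisms to $\id_{\G}$, which is manifestly $\G$-invariant.

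Alternatively, one can verify the invariance by direct super computation: expanding $[y, x_i] = \sum_j a_{ij} x_j$ (in $\G$) and $[y, x_i^*] = \sum_j b_{ij} x_j^*$ (legitimate because $\G^*$ is a $\G$-submodule of $\gl(n|n)$), and applying the identity $\text{Str}([y,a]b) + (-1)^{\bar y\, \bar a}\text{Str}(a[y,b]) = 0$ coming from super-invariance of $\text{Str}$, one checks that the coefficient of each $x_j \otimes x_k^*$ in $[y \otimes 1 + 1 \otimes y,\, C]$ vanishes. The only real technical point is keeping the $\Z_2$-graded signs consistent; conceptually the argument is just the standard observation that the canonical coevaluation element in $\G \otimes \G^*$ is invariant under any Lie superalgebra acting on both factors.
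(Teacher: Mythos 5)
Your proposal is correct, but its main line of argument is genuinely different from the paper's. The paper proves the proposition by brute force: it introduces structure constants $C_{ab}^q$ for $\mf g$, uses invariance of the supertrace form to compute $[x_a, x_b^*]$ in terms of the same constants, and then verifies by a careful sign bookkeeping (using that $C_{ij}^q\neq 0$ forces $\ov{x_i}+\ov{x_j}=\ov{x_q}$) that the coefficient of each $x_i m\otimes x_j^* v$ in $C(x_k(m\otimes v))-x_k(C(m\otimes v))$ vanishes --- this is precisely the ``alternative'' computation you sketch in your last paragraph. Your primary argument instead reduces everything to the observation that $C$ lies in $\mf g\otimes \mf g^*$, that the supertrace pairing identifies the submodule $\mf g^*\subset\gl(n|n)$ with the linear dual module $\mf g^\vee$, and that the coevaluation element $\sum_i x_i\otimes x_i^\vee$ corresponds to $\id_{\mf g}\in\End(\mf g)$ and is therefore invariant; combined with the standard fact that $\mathrm{ad}$-invariance of $C$ under $y\mapsto y\otimes 1+1\otimes y$ is exactly the statement $[\Delta(y),C]=0$ on $M\otimes V$, this gives the result. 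Both routes are valid; the paper's computation is self-contained and makes the sign conventions completely explicit (which matters elsewhere in the paper, e.g.\ in the appendix verifications of $(P.8)$), while your conceptual argument is shorter, explains \emph{why} the statement is true independently of the particular choice of bases, and generalizes immediately to any subalgebra $\mf a\subset\mf b$ with an $\mf a$-stable complement paired non-degenerately with $\mf a$ by an invariant form. The one place where you should be explicit if you write this up is the verification that the super coevaluation element is invariant (equivalently, that $V\otimes V^\vee\to\End(V)$ is a morphism of $\mf g$-modules with your sign conventions and sends $\sum_i x_i\otimes x_i^\vee$ to $\id$); this is standard but is exactly where a sign error would hide, and it uses the homogeneity of the dual basis ($\ov{x_i}=\ov{x_i^*}$), which the paper derives from the evenness of the supertrace form.
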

\begin{proof}
Set $\ell =\dim \G$. For each $1\leq a,b\leq \ell$, write $[x_a,x_b] = \sum_{q=1}^{\ell}C_{ab}^q x_q$. It is a well-known fact that the supertrace form is an even supersymmetric invariant bilinear form on $\gl(n|n)$, and hence we have $\ov{x_i} = \ov{x_i^*}$, $\forall 1\leq i\leq \ell$.

Since $\text{Str}(x_k\sum_{q=1}^{\ell}C_{qa}^b(-1)^{\overline{x_q}} x_{q}^*) =C_{ka}^b  = (-1)^{\overline{x_b^*}}\text{Str}([x_k,x_a]x_{b}^*) =(-1)^{\overline{x_b^*}}\text{Str}(x_k[x_a,x_{b}^*]) $ for all $1\leq a,b,k\leq \ell$, we have that $[x_a, x_{b}^*] = \sum_{q=1}^{\ell}C_{qa}^b(-1)^{\overline{x_b^*}+\overline{x_q}} x_{q}^*$, for all $1\leq a,b\leq \ell$. Now fix $1\leq k\leq \ell$ and homogenous elements $m\in M$, $v\in V$. By using the fact that $C_{ij}^{q} =(-1)^{1+\overline{x_i}\cdot\overline{ x_j}} C_{ji}^q$, together with the following fact, \begin{align} &C_{ij}^{q} \neq 0 \text{ implies } \overline{x_i }+\overline{x_j }=\overline{x_q}, \label{ComesElement::NonZeroCoeff}\end{align} we have
\begin{align*}
  &C(x_k(m\otimes v))) = (\sum_{i=1}^{\ell}x_i\otimes x_i^*)  (x_km\otimes v + (-1)^{\overline{x_k}\cdot \overline{m}}m\otimes x_kv) \\ &=  \sum_{i=1}^{\ell}  \{((-1)^{(\overline{x_k}+\overline{m}) \overline{x_i^*}}[x_i,x_k]m \otimes x_i^*v)+ ((-1)^{\overline{x_k}\overline{m}+\overline{x_i^*} \cdot \overline{m}}x_im \otimes[x_i^*,x_k]v)\}  +  x_k(C(m\otimes v))\\
  &=  \sum_{i,j=1}^{\ell}  \{((-1)^{ (\overline{x_k}+\overline{m}) \overline{x_j^*}}C^{i}_{jk} x_im\otimes x_j^*v)+ ((-1)^{\overline{x_k}\overline{m}+\overline{x_i^*} \cdot \overline{m}+\overline{x_i^*} \cdot \overline{x_k} +1 + \overline{x_j^*} + \overline{x_i^*}} C_{jk}^{i}x_im \otimes x_j^*v)\}  +  x_k(C(m\otimes v))\\
  &=  \sum_{i,j=1}^{\ell}  \{((-1)^{ (\overline{x_k}+\overline{m}) \overline{x_j^*}}C^{i}_{jk}+ (-1)^{\overline{x_k}\overline{m}+\overline{x_i^*} \cdot \overline{m}+\overline{x_i^*} \cdot \overline{x_k} +1 + \overline{x_j^*} + \overline{x_i^*}} C_{jk}^{i})(x_im \otimes x_j^*v)\}  +  x_k(C(m\otimes v)) \\ &= x_k(C(m\otimes v)).
\end{align*}
The last equality follows from the following calculation:
If $C_{jk}^{i} \neq 0$ then by \eqref{ComesElement::NonZeroCoeff} we have that
\begin{align*}
  &(\overline{x_k}+\overline{m}) \overline{x_j^*} \equiv    (\overline{x_k}+\overline{m}) (\overline{x_i}+ \overline{x_k}) \equiv \overline{x_k}+ \overline{x_i^*}  \overline{x_k}+ \overline{x_k}\overline{m}+\overline{x_i^*}  \overline{m}  \\   &\equiv  \overline{x_j} + \overline{x_i}  +\overline{x_i^*}  \overline{x_k} + \overline{x_k}\overline{m}+\overline{x_i^*}  \overline{m} \text{ (mod 2)}.
\end{align*}
\end{proof}

\begin{rem}
	The Casimir-like element for the queer Lie superalgebra $\mathfrak{q}(n)$ appeared in \cite{HKS}, which was inspired by \cite{Ol}. The Casimir-like element in Proposition \ref{Prop::SC} is due to Dimitar Grantcharov and Jonathan Kujawa. We have since learned that the existence of this element was also known to Dimitar Grantcharov and Jonathan Kujawa.
\end{rem}

We decompose $\frakgl(n|n)$ into a direct sum of $\mathfrak{g}$-submodules $\mathfrak{g}$ and $\mathfrak{g}^*$ as follows:
 \begin{align*}
 & \frakgl(n|n) =\mathfrak{g} \oplus \mathfrak{g}^*  = \mathfrak{g}\oplus
\left\{ \left( \begin{array}{cc} A & J\\
K & A^t\\
\end{array} \right) ,
\text{where $J$ is skew-symmetric and $K$ is symmetric } \right\}.
 \end{align*}
 Let $\ell=\dim \G$. A choice of basis $\{x_i\}$ for $\G$ and its dual basis $\{x_i^*\}$ for $\G^*$ is given as follows with respect to the supertrace form on $\mf{gl}(n|n)$ given by $\langle x_i, x^*_j\rangle := \text{Str}(x^*_jx_i)$, for all $1\leq i,j \leq \ell$.
 \begin{align*}
 \{x_i\}_{i=1}^{\ell}:=\left\{ E_{st}:= \left( \begin{array}{cc} e_{st} & 0\\
0 & -e_{ts}\\
\end{array} \right) \right\} \bigcup
\left\{ X_{st} :=\left( \begin{array}{cc}  0 & e_{st}+e_{ts}\\
0 & 0 \\
\end{array} \right) \right\} \bigcup \\
\left\{X_{ss}:= \left( \begin{array}{cc}  0 & e_{ss}\\
0 & 0 \\
\end{array} \right) \right\} \bigcup
 \left\{ Y_{st}:=\left( \begin{array}{cc} 0 & 0\\
e_{st}-e_{ts} & 0\\
\end{array} \right) \right\},
 \end{align*}
 \begin{align*}
 \{x_j^*\}_{j=1}^{\ell}:=\left\{ E_{st}^*:=\frac{1}{2} \left( \begin{array}{cc} e_{ts} & 0\\
0 & e_{st}\\
\end{array} \right) \right\} \bigcup
\left\{ X_{st}^*:= -\frac{1}{2} \left( \begin{array}{cc}  0 &0\\
 e_{ts}+e_{st} & 0 \\
\end{array} \right) \right\} \bigcup \\
\left\{ X_{ss}^*:=-\left( \begin{array}{cc}  0 & 0\\
e_{ss} & 0 \\
\end{array} \right) \right\} \bigcup
 \left\{ Y_{st}^*:= - \frac{1}{2} \left( \begin{array}{cc} 0 & e_{st}-e_{ts} \\
0 & 0\\
\end{array} \right) \right\},
 \end{align*} where $1\leq s\neq t \leq n$. Here $e_{st}$ denotes the elementary $n\times n$ matrix with $(s,t)$-entry $1$ and zero elsewhere. Throughout this article, we shall consider the element $C$ with respect to the above choice of bases.

\vskip 1cm

\section{Affine Periplectic Brauer algebras} \label{Section::AffinePBrandASThm}

 \subsection{Generators and Relations}
 In this section, we give the definition of the affine periplectic Brauer algebra $\Pd$, which is the main object studied in this article.
It can be regarded as an affine analogue of the periplectic Brauer algebra $A_d$ discussed in \cite{Co, Mo} (also known as the marked Brauer algebra in \cite{KT}), or a periplectic analogue of the degenerate affine Nazarov-Wenzel algebra $We(d)$ studied in \cite{ES, Na}.

\begin{df}
\label{def:VW}
Let $d \in \N$. The {\em affine periplectic Brauer algebra}, denoted by $\Pd$, is the associative algebra over $\C$ generated by the elements
\begin{eqnarray*}
 s_a, \,\,   \vep_a, \,\,   y_j, & \quad &1 \leq a \leq d-1, \,\,1 \leq j \leq d,
\end{eqnarray*}
subject to the relations (for all $1
\leq a,b,c \leq d-1$ and $1 \leq i,j \leq d$):
\begin{enumerate}[($ P$.1)]
\item $ s_a^2 = 1$;\label{df1} 
\item 	
\begin{enumerate} 	
\item $ s_a s_b =  s_b s_a$ for $\mid a-b \mid > 1$; 	 \label{df2a}
\item $ s_c  s_{c+1} s_c = s_{c+1} s_c s_{c+1}$; 	 \label{df2b}
    \item $ s_a y_i = y_i s_a$ for $i \not\in \{a,a+1\}$; \label{df2c}
    \end{enumerate}
\item $ \vep_a^2 =0$; \label{df3}
\item $ \vep_1y_1^k \vep_1 = 0$ for $k \in \N$; \label{df4}
\item 	
\begin{enumerate}
\item $ s_a \vep_b =  \vep_b s_a$ and $ \vep_a \vep_b =  \vep_b \vep_a$ for $\mid a-b \mid > 1$; \label{df5a}	
\item $ \vep_ay_i =  y_i \vep_a$ for $i \not\in \{a,a+1\}$; \label{df5b}	
\item $ y_i y_j =  y_j y_i$; \label{df5c}	
\end{enumerate}
\item
\begin{enumerate} 	
\item $ \vep_as_a = - \vep_a = - s_a \vep_a$; 	\label{df6a}
\item $ s_c \vep_{c+1} \vep_c = - s_{c+1}\vep_c$ and
    $ \vep_c \vep_{c+1} s_c =  \vep_c s_{c+1}$; \label{df6b}	
\item $ \vep_{c+1} \vep_cs_{c+1} = - \vep_{c+1} s_c$ and $ s_{c+1} \vep_c \vep_{c+1} =  s_c \vep_{c+1}$; \label{df6c}	
\item $ \vep_{c+1} \vep_c \vep_{c+1} =
    -\vep_{c+1}$ and $\vep_c\vep_{c+1}\vep_c = -\vep_c$; 	\label{df6d}
\end{enumerate}
\item $ s_a y_a -  y_{a+1} s_a =  \vep_a - 1$ and $ y_a s_a - s_a y_{a+1} = - \vep_a - 1$; \label{df7}
\item 	
\begin{enumerate} 	
    \item $ \vep_a( y_a-y_{a+1}) =  \vep_a$; \label{df8a}	
    \item $( y_a-y_{a+1}) \vep_a = - \vep_a$. \label{df8b}	
\end{enumerate}
\end{enumerate}
\end{df}
Note that the relations without the appearance of any $y_j$ are precisely the defining relations of the {\em periplectic Brauer algebra} $A_d$ studied in \cite{Co, KT, Mo}. On the other hand, these relations appeared in \cite[Definition 2.1]{ES}, \cite{Na} by setting all $w_k=0$ there, except for few differences in signs.

One may actually exclude $(P.4)$ from the defining relations as indicated in the following lemma. We keep it here so that one can compare with the relations in \cite[Definition 2.1]{ES}.
\begin{lemma} \label{relation4}
The defining relation $(P.4)$ can be derived from relations $(P.3)$, $(P.5)$ and $(P.8)$.
\end{lemma}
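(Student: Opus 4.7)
The strategy is to use (P.8a) and (P.8b) together with the commutativity of the $y_i$'s (relation (P.5c)) to move $y_1$ past $\varepsilon_1$ in a controlled way. From (P.8a) one has $\varepsilon_1 y_1 = \varepsilon_1(1+y_2)$, and since $y_1$ commutes with $y_2$ by (P.5c), a straightforward induction on $k$ yields
\begin{equation*}
\varepsilon_1 y_1^k \;=\; \varepsilon_1 (1+y_2)^k \qquad \text{for every } k \geq 0.
\end{equation*}
Symmetrically, (P.8b) together with (P.5c) gives
\begin{equation*}
y_1^k \varepsilon_1 \;=\; (y_2 - 1)^k\,\varepsilon_1.
\end{equation*}

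Sandwiching $y_1^k$ between two copies of $\varepsilon_1$ and applying both formulas produces two \emph{a priori} distinct expressions for the same element:
\begin{equation*}
\varepsilon_1 (1+y_2)^k \varepsilon_1 \;=\; \varepsilon_1 y_1^k \varepsilon_1 \;=\; \varepsilon_1 (y_2 - 1)^k \varepsilon_1.
\end{equation*}
Setting $a_j := \varepsilon_1 y_2^j \varepsilon_1$, binomial expansion of the outer terms and comparison yields, for each $k \geq 1$, the identity
\begin{equation*}
\sum_{\substack{0 \leq j \leq k \\ k-j \text{ odd}}} \binom{k}{j}\, a_j \;=\; 0.
\end{equation*}

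I then show by strong induction on $j$ that $a_j = 0$ for every $j \geq 0$. The base case $a_0 = \varepsilon_1^2 = 0$ is precisely (P.3). For the inductive step, apply the identity above at $k = j+1$: the largest index appearing in the sum is $i = j$ (since $(j+1) - j = 1$ is odd), and it carries the nonzero scalar coefficient $\binom{j+1}{j} = j+1$, while every other term has $i < j$ and hence vanishes by the inductive hypothesis. Thus $(j+1)\, a_j = 0$, forcing $a_j = 0$. Substituting back into $\varepsilon_1 y_1^k \varepsilon_1 = \sum_{j=0}^{k} \binom{k}{j} a_j$ gives $\varepsilon_1 y_1^k \varepsilon_1 = 0$, which is (P.4).

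The main point is more conceptual than computational: one must notice that the sign asymmetry between (P.8a) (producing $+\varepsilon_a$) and (P.8b) (producing $-\varepsilon_a$) forces the two expressions for $\varepsilon_1 y_1^k \varepsilon_1$ to disagree by a nontrivial polynomial in $y_2$ unless all $a_j$ vanish. Together with the nilpotency $\varepsilon_1^2 = 0$ supplied by (P.3), the resulting linear system is just tight enough, over $\C$, to kill every $a_j$ inductively. The only mild obstacle is ensuring there is always a leading coefficient one can invert, which is guaranteed by working in characteristic zero.
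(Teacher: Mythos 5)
Your proof is correct and rests on the same mechanism as the paper's: exploiting the sign discrepancy between (P.8)(a) and (P.8)(b), together with (P.5)(c) and $\vep_1^2=0$, to compute $\vep_1 y_1^k\vep_1$ in two ways and force the sandwiched elements to vanish (the paper does the case $k=1$ via $\vep_1 y_2^2\vep_1$ and leaves the general $k$ to "a similar argument and induction"). Your binomial reformulation with $a_j=\vep_1 y_2^j\vep_1$ and the strong induction extracting the coefficient $\binom{j+1}{j}=j+1$ is a clean, fully explicit way of carrying out exactly that induction.
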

\begin{proof}
By relations $(P.8)$(a), $(P.5)$(c) and $(P.3)$, we have $$\vep_1 y_2^2 \vep_1 = \vep_1 y_2(y_2 \vep_1) =  \vep_1 y_2(y_1 \vep_1+\vep_1) =  \vep_1 y_1(y_1 \vep_1+\vep_1)+ \vep_1(y_1 \vep_1+\vep_1) = \vep_1y_1^2\vep_1 +2\vep_1y_1\vep_1.$$
 On the other hand,  by $(P.8)$(b), $(P.5)$(c) and $(P.3)$, we have $$\vep_1 y_2^2 \vep_1 = (\vep_1 y_2)y_2 \vep_1 =  (\vep_1y_1-\vep_1)y_2\vep_1 = (\vep_1y_1-\vep_1)y_1\vep_1- (\vep_1y_1-\vep_1)\vep_1 = \vep_1y_1^2\vep_1 -2\vep_1y_1\vep_1.$$ Therefore, $\vep_1y_1\vep_1 =0$.
The general case follows from a similar argument and induction on $k$.
\end{proof}

\subsection{An Arakawa-Suzuki type functor $\mc F$}
Let $M$ be an arbitrary $\frak{g}$-module. Denote by $C_{M, V} \in \text{End}_{\frak{g}}(M\otimes V)$ the corresponding image of the Casimir-like element $C$  in the action $C\curvearrowright M\otimes V$.
Denote by $s \in \text{End}_{\G}(V^{\otimes 2})$ the super-permutation
  \begin{align}
  s(e_a\otimes e_b) = (-1)^{\overline{e_a}\cdot \overline{e_b}}e_b\otimes e_a, \ \ \text{ for }a,b\in I.
  \end{align}
Furthermore, define $\vep \in \text{End}_{\G}(V^{\otimes 2})$ by \begin{align} \label{def::vep}\vep=2C_{V, V} -s.\end{align}

  \begin{lemma} \label{FormulaForelemente}
  \begin{align*}
  \vep(e_a\otimes e_b) = (e_a,e_b) \sum_{i=1}^n (e_i\otimes e_{\overline{i}} - e_{\overline{i}} \otimes e_i)=  \left\{ \begin{array}{ll} 0,  \text{ if } a \not = \overline{b},
\\  \sum_{i\in I} (-1)^{\overline{e_{i}}} (e_i\otimes e_{\overline{i}}), \text{ if }  a = \overline{b}.
 \end{array} \right.
  \end{align*}
 \end{lemma}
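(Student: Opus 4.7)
\medskip
\noindent\textbf{Proof plan.} The strategy is a direct case-by-case verification: expand
\[
\vep(e_a\otimes e_b) \;=\; 2\sum_i (-1)^{\bar{x_i^*}\,\bar{e_a}}\, x_i(e_a)\otimes x_i^*(e_b) \;-\; (-1)^{\bar{e_a}\,\bar{e_b}}\, e_b\otimes e_a
\]
using the explicit basis $\{x_i\}=\{E_{st}\}\sqcup\{X_{st}\}\sqcup\{X_{ss}\}\sqcup\{Y_{st}\}$ and its dual given just before the statement, and collect terms. As a first step I would record the action of each generator on $\{e_i\}_{i\in I}$: the even $E_{st}$ preserves parity with $E_{st}(e_t)=e_s$ and $E_{st}(e_{\bar s})=-e_{\bar t}$; the odd generators $X_{st},X_{ss}$ send the odd subspace into the even one; and $Y_{st}$ sends the even into the odd one (with analogous statements, up to the scalars $\pm\tfrac12,-1$, for the duals). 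To avoid the $X_{st}=X_{ts}$, $Y_{st}=-Y_{ts}$ double-counting I would let the sums over $X_{st},Y_{st}$ run over pairs $s<t$, which gives exactly the $2n^2$ basis elements of $\mf p(n)$.

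Next, I would split into the four parity cases for $(\bar{e_a},\bar{e_b})$. In cases (even, even) and (odd, odd) one has $(e_a,e_b)=0$, and all odd generators kill the first factor of $e_a\otimes e_b$ (since $X_{st},X_{ss}(e_k)=0$ and $Y_{st}(e_{\bar k})=0$), so only the $E_{st}$-summands survive; a short calculation yields $C_{V,V}(e_a\otimes e_b)=\tfrac12\, s(e_a\otimes e_b)$ in both cases, so $\vep(e_a\otimes e_b)=0$, matching the formula. In case (even, odd), with $(e_a,e_b)=(e_k,e_{\bar l})$, the $E_{st}$-summand contributes $\tfrac12\delta_{kl}\sum_s e_s\otimes e_{\bar s}$, the $X_{st}, X_{ss}$ summands vanish, and the $Y_{st}$-summand produces a diagonal part $-\tfrac12\delta_{kl}\sum_{s\neq k} e_{\bar s}\otimes e_s$ together with an off-diagonal part $\tfrac12 e_{\bar l}\otimes e_k$ when $k\neq l$. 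Subtracting $s(e_k\otimes e_{\bar l})=e_{\bar l}\otimes e_k$ gives $0$ when $k\neq l$ and $\sum_s e_s\otimes e_{\bar s}-\sum_s e_{\bar s}\otimes e_s=\omega$ when $k=l$. Case (odd, even) is entirely parallel, with $Y_{st}$ replaced by the pair $X_{st}, X_{ss}$; here the diagonal term from $X_{ss}$ is what turns $\tfrac12\sum_{s\neq k}\,(\cdots)$ into the full sum $\tfrac12\sum_s(\cdots)$ needed to recover $\omega$.

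The main obstacle will be the bookkeeping in the two mixed-parity cases: one has to track the super-sign $(-1)^{\bar{x_i^*}\,\bar{e_a}}$ in every summand, keep the ranges $s<t$ (so that one is actually summing over a basis and not double-counting), and then correctly separate the diagonal $k=l$ contributions from the off-diagonal $k\neq l$ contributions of the $Y_{st}$, $X_{st}$, and $X_{ss}$ families. Once the arithmetic is carried out, the equality
\[
\vep(e_a\otimes e_b)\;=\;(e_a,e_b)\sum_{i=1}^n\bigl(e_i\otimes e_{\bar i}-e_{\bar i}\otimes e_i\bigr)
\]
drops out, and the second equality in the statement follows from $(e_a,e_b)=\delta_{a,\bar b}$ together with the identity $\sum_{i=1}^n(e_i\otimes e_{\bar i}-e_{\bar i}\otimes e_i)=\sum_{i\in I}(-1)^{\bar{e_i}}e_i\otimes e_{\bar i}$, obtained by reindexing the second sum via $j=\bar i$.

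As a sanity check, one could alternatively invoke the $\mf g$-equivariance of $\vep$ established in Proposition~\ref{Prop::SC} and the $\mf g$-equivariance of the map $e_a\otimes e_b\mapsto (e_a,e_b)\omega$ (which follows from invariance of the form $(\cdot,\cdot)$ and the direct $\mf g$-invariance of $\omega$) to reduce the verification to a single vector in each $\mf g$-orbit; but since $V\otimes V$ is not multiplicity-free as a $\mf g$-module, the direct computation above is still needed on at least one representative per parity case, and it is as quick to just do all four.
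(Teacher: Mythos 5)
The paper states this lemma without proof, so the natural comparison is against the intended direct verification, and that is exactly what you carry out: your expansion of $\vep = 2C_{V,V}-s$ over the explicit basis, with the $s<t$ convention for the $X_{st},Y_{st}$ families, and your case analysis (including the diagonal contributions $-\tfrac12\sum_{s\ne k}e_{\bar s}\otimes e_s$ from the $Y$'s and the completion of the sum via $X_{ss}$ in the odd--even case) all check out. Your proposal is correct and is essentially the computation the paper leaves to the reader.
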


By abusing notation, we define the following elements in $\text{End} (M\otimes V^{\otimes d})$:
\begin{align}\label{VWact}
s_a = \text{Id}^{\otimes a} \otimes s \otimes \text{Id}^{\otimes {d-a-1}}, \ \ \vep_a= \text{Id}^{\otimes a} \otimes \vep \otimes \text{Id}^{\otimes {d-a-1}}, \ \  y_j = 2C_{M\otimes V^{\otimes {j-1}}, V} \otimes \text{Id}^{\otimes {d-j}},
\end{align} for $1\leq a \leq d-1$, $1\leq j\leq d$. It shall be clear from the context when we regard these elements as elements in $\Pd$ and when as elements in $\text{End} (M\otimes V^{\otimes d})$.

Moreover,
for $1\leq i<j\leq d$ and homogenous $m\in M$
, we define $\Omega_{i,j}$ and $\Omega_{0,j}$ in  $\End (M\otimes V^{\otimes d}) $ by
\begin{align*} \Omega_{i,j}&(m\otimes e_{t_1}\otimes e_{t_2}\otimes \cdots \otimes e_{t_d}) \\
&:=2\sum_{k=1}^{\dim\frak{g}}(-1)^{\ov{x_k}(\sum_{s=1}^{i-1}\ov{e_{t_s}}+\ov{m})+\ov{x_k}(\sum_{s=1}^{j-1}\ov{e_{t_s}}+\ov{m})}(m\otimes e_{t_1}\otimes  \cdots  \otimes x_{k}e_{t_i} \otimes  \cdots \otimes x_k^*e_{t_j}\otimes  \cdots  \otimes e_{t_d}),\\
\Omega_{0,j}&(m\otimes e_{t_1}\otimes e_{t_2}\otimes \cdots \otimes e_{t_d}) \\
&:=2\sum_{k=1}^{\dim\frak{g}}(-1)^{\ov{x_k}(\sum_{s=1}^{j-1}\ov{e_{t_s}})}(x_km\otimes e_{t_1}\otimes e_{t_2}\otimes  \cdots  \otimes x_{k}^*e_{t_j} \otimes \cdots \otimes e_{t_d}). \end{align*}
We may observe that \begin{align}\label{def::PolynomialVars}y_j=\sum_{0\leq k\leq j-1}\Omega_{k,j},\end{align} for all $1\leq j\leq d.$

 \begin{lemma} \label{BrauerAction}
  The elements $s_a$, $\vep_a$, $y_j$ defined by (\ref{VWact}) commute with the natural $\mf g$-action on $M\otimes V^{\otimes d}$. In other words, they belong to $\End_{\mf g} (M\otimes V^{\otimes d})$.
  \end{lemma}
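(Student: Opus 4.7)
The plan is to treat each of the three families of generators separately, reducing in every case to a fact already available in the paper. The single underlying observation I will use throughout is that if $\phi \in \End_{\mf g}(N)$ is an even morphism, then for any $\mf g$-module $W$ the operators $\phi \otimes \text{Id}_W$ and $\text{Id}_W \otimes \phi$ lie in $\End_{\mf g}(N \otimes W)$ and $\End_{\mf g}(W \otimes N)$ respectively. This is a one-line verification using the graded Leibniz coproduct $x(n \otimes w) = xn \otimes w + (-1)^{\bar x \bar n} n \otimes xw$, where parity-preservation of $\phi$ ensures the signs match up. All the operators appearing in \eqref{VWact} are manifestly even, so this observation will apply.

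First I would dispose of $s_a$: the super-permutation $s \in \End(V \otimes V)$ is the standard graded flip, well known to be $\mf{gl}(n|n)$-equivariant and hence $\mf g$-equivariant. Sandwiching between identities then yields $s_a \in \End_{\mf g}(M \otimes V^{\otimes d})$.

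Next, for $\vep_a$, I combine the definition $\vep = 2C_{V,V} - s$ from \eqref{def::vep} with Proposition~\ref{Prop::SC} applied to $M = V$, which produces $C_{V,V} \in \End_{\mf g}(V \otimes V)$. Together with the previous step this gives $\vep \in \End_{\mf g}(V \otimes V)$, and tensoring with identities again gives $\vep_a$.

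Finally, for $y_j$, I apply Proposition~\ref{Prop::SC} directly with $M \otimes V^{\otimes{j-1}}$ in place of $M$, obtaining $C_{M \otimes V^{\otimes{j-1}},V} \in \End_{\mf g}\bigl((M \otimes V^{\otimes{j-1}}) \otimes V\bigr)$; tensoring on the right with $\text{Id}^{\otimes{d-j}}$ yields $y_j$. There is no real obstacle here: the Casimir-like element was constructed in Proposition~\ref{Prop::SC} precisely so that it intertwines the $\mf g$-action on the first $j$ factors, and the only bookkeeping is to identify, for each generator, which tensor factors are acted on nontrivially and which are fixed.
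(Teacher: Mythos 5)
Your proof is correct and follows essentially the same route as the paper: $s_a$ via equivariance of the graded flip under $\gl(n|n)$ and hence under $\mf g$, and $y_j$ by applying Proposition~\ref{Prop::SC} with $M$ replaced by $M\otimes V^{\otimes j-1}$. The only divergence is at $\vep_a$, where the paper invokes the explicit formula of Lemma~\ref{FormulaForelemente}, whereas you deduce equivariance directly from the definition $\vep=2C_{V,V}-s$ together with Proposition~\ref{Prop::SC} applied to $M=V$; both are valid, and your justification is arguably the more self-contained of the two.
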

 \begin{proof}
 For $\vep_a$, the statement follows from Lemma~\ref{FormulaForelemente}.
 For $s_a$, a similar operator is defined in \cite[Section 5.1]{CW}, which is known to commute with the action of $\gl(n|n)$ and hence commutes with the action of $\frak{g}$.
 For $y_j$, replacing $M$ by $M\otimes V^{\otimes j-1}$ in Proposition \ref{Prop::SC}, and the lemma follows.
  \end{proof}

\begin{prop}\label{not4}
The elements $\{s_a, \vep_a, y_j \,|\, 1\leq a\leq d-1,\,\, 1\leq j\leq d\}$ in $\End_\frak{g}(M\otimes V^{\otimes d})$ satisfy the relations (P.1)--(P.3) and (P.5)--(P.8).
\end{prop}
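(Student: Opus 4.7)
The plan is to verify the eight families of defining identities one at a time, organised by how strongly they interact with the polynomial generators $y_j$.

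A first group, consisting of (P.1), (P.2)(a)(b), (P.3), (P.5)(a), and (P.6), involves only $s_a$ and $\vep_a$. Since these operators are supported on at most two adjacent tensor factors and act by the explicit formulas of Lemma~\ref{FormulaForelemente} and the super-twist on $V^{\otimes 2}$, each relation reduces to a finite check in $\End(V^{\otimes 2})$ or $\End(V^{\otimes 3})$; these are essentially the identities defining the periplectic Brauer algebra $A_d$ acting on $V^{\otimes d}$, which were already established in \cite{Co, KT, Mo}. A second group, (P.2)(c) and (P.5)(b), asserts that $y_i$ commutes with $s_a$ and with $\vep_a$ whenever $i \notin \{a, a+1\}$. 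If $i < a$ then $s_a$ and $\vep_a$ act as identity on $M \otimes V^{\otimes i}$, and commutativity is immediate from the tensor factorisation. If $i \geq a+2$, then $s_a$ and $\vep_a$ are $\mf g$-equivariant endomorphisms of $M \otimes V^{\otimes (i-1)}$ by Lemma~\ref{BrauerAction}, tensored with identity on the remaining factors, while $y_i = 2 C_{M \otimes V^{\otimes i-1}, V} \otimes \text{Id}^{\otimes d-i}$. The key observation is that $C_{N,V}$ commutes with $\phi \otimes \text{Id}_V$ for any $\mf g$-equivariant $\phi \in \End(N)$; this is the same super-sign manipulation as in the proof of Proposition~\ref{Prop::SC}, with $\phi$ in place of the $x_k$-action on the first slot.

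The main obstacle is the commutativity relation (P.5)(c) of the polynomial generators. My approach is to expand $y_j = \sum_{k<j} \Omega_{k,j}$ using (\ref{def::PolynomialVars}), so that $[y_i,y_j] = \sum_{a<i,\, b<j} [\Omega_{a,i}, \Omega_{b,j}]$, and then verify a suitable super-version of the infinitesimal braid relations: pairs $\Omega_{a,b}$ and $\Omega_{c,d}$ with $\{a,b\} \cap \{c,d\} = \emptyset$ super-commute (after the Koszul signs cancel), while for a triple $a < b < c$ of distinct indices one has $[\Omega_{a,b},\, \Omega_{a,c} + \Omega_{b,c}] = 0$. The triple identity comes ultimately from the $\mf g$-invariance of the Casimir-like element $C \in \mf g \otimes \mf{gl}(n|n)$, combined with the structure-constant identities already exhibited in the proof of Proposition~\ref{Prop::SC}. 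Once these identities are in hand, the double sum partitions into disjoint and overlapping cases and telescopes to zero; the delicate sign bookkeeping inherited from the supertrace form on $\mf{gl}(n|n)$ is the heart of this step.

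Finally, for (P.7) and (P.8) I would exploit the identification $\Omega_{a,a+1} = 2 C_{V,V}|_{a,a+1} = \vep_a + s_a$, which is immediate from (\ref{def::vep}). Since conjugation by $s_a$ merely swaps the roles of positions $a$ and $a+1$ in the indexing set of the sum, one gets $s_a y_a s_a = \sum_{k<a} \Omega_{k,a+1} = y_{a+1} - \Omega_{a,a+1} = y_{a+1} - \vep_a - s_a$; multiplying by $s_a$ on either side and applying (P.6)(a) yields both halves of (P.7). For (P.8)(a) I would write $y_{a+1} - y_a = \Omega_{a,a+1} + \sum_{k<a}(\Omega_{k,a+1} - \Omega_{k,a})$; the first summand gives $\vep_a \Omega_{a,a+1} = \vep_a(\vep_a + s_a) = -\vep_a$ by (P.3) and (P.6)(a), while each $\vep_a(\Omega_{k,a+1} - \Omega_{k,a})$ with $k<a$ vanishes by a cap-slide identity: $\vep_a$ factors through the one-dimensional pairing on positions $a, a+1$, and pulling a Casimir summand across the cap converts its action on position $a+1$ into its action on position $a$ with a sign compatible with the supersymmetry of the form, so the two contributions cancel. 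The mirror identity (P.8)(b) is obtained by applying $\vep_a$ on the right instead of the left.
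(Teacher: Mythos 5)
Your proposal is correct in substance and reaches all of (P.1)--(P.3), (P.5)--(P.8), but it takes a genuinely different route from the paper at several points. For (P.2)(c), (P.5)(b) and (P.5)(c) you argue structurally: $s_a,\vep_a$ (and $y_i$ itself) are even $\mf g$-equivariant endomorphisms of $M\otimes V^{\otimes (i-1)}$, hence commute with $C_{M\otimes V^{\otimes(i-1)},V}\otimes\mathrm{Id}$, supplemented by the super infinitesimal-braid identities $[\Omega_{a,b},\Omega_{a,c}+\Omega_{b,c}]=0$ coming from the invariance of $C$ (Proposition~\ref{Prop::SC}). The paper instead declares (P.5)(c) a routine check and proves (P.5)(b) by decomposing $y_i=\sum_k\Omega_{k,i}$ and invoking the vanishing statement of Lemma~\ref{lem::ComLem1}; your equivariance argument is cleaner and in fact makes your infinitesimal-braid treatment of (P.5)(c) unnecessary, since $y_i$ restricted to $M\otimes V^{\otimes(j-1)}$ is already equivariant by Lemma~\ref{BrauerAction}. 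For (P.7) you use the conjugation identity $s_ay_as_a=y_{a+1}-\Omega_{a,a+1}=y_{a+1}-\vep_a-s_a$ together with (P.1) and (P.6)(a), where the paper expands $y_{a+1}\circ s_a$ directly; both are fine, and your version makes the origin of the $\vep_a-1$ term transparent (modulo keeping track of the opposite-algebra convention of \eqref{Thm::ASThm}, an issue the paper's own computation shares).

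The one place where your argument is materially thinner than the paper's is (P.8). Your reduction $y_{a+1}-y_a=\Omega_{a,a+1}+\sum_{k<a}(\Omega_{k,a+1}-\Omega_{k,a})$ with $\vep_a\Omega_{a,a+1}=\vep_a(\vep_a+s_a)=-\vep_a$ is exactly the paper's strategy, but the remaining "cap-slide" identities --- that $\vep\circ(x^*\otimes 1)=\vep\circ(1\otimes x^*)$ and dually $(x^*\otimes 1)\omega=(1\otimes x^*)\omega$ for $x^*\in\mf g^*$ and $\omega=\sum_{i\in I}(-1)^{\ov{e_i}}e_i\otimes e_{\ov i}$, with the Koszul signs matching those in the definition of $\Omega_{k,j}$ --- are the entire computational content of (P.8). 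The paper verifies them case by case over the basis $\{E_{st}^*,X_{st}^*,X_{ss}^*,Y_{st}^*\}$ in Lemmas~\ref{lem::Eq::Cal::VW82} and~\ref{lem::Eq::Cal::VW81}. Your justification ("a sign compatible with the supersymmetry of the form") is morally the right explanation --- elements of $\mf g^*$ are self-adjoint for the odd symmetric form, dually to the skew-adjointness defining $\mf p(n)$, which is why the two contributions agree rather than differ by a sign --- but as written it is an assertion, not a verification; you would need to either prove the self-adjointness characterization of $\mf g^*$ or do the basis check as the paper does. With that point filled in, your proof is complete.
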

\begin{proof}
We check by definition that these relations hold in $\End_\frak{g}(M\otimes V^{\otimes d})$.
Relations $(P.1)$, $(P.2)$, $(P.5)$(a), $(P.5)$(c), $(P.6)$ are straightforward to check, where some of them can be found in \cite{Co, Mo}; $(P.3)$ is an immediate consequence of Lemma~\ref{FormulaForelemente}; $(P.8)$(a) and $(P.8)$(b) are somehow involved and they are verified in Appendixes \ref{PfOf8a} and \ref{PfOf8b}.

Consider $(P.5)$(b). The proof is similar to \cite[Lemma 8.4]{ES}. If $i\leq a+1$ then it obviously holds. Assume that $i> a+1$.
Since the parity of the basis element $x_j\in \frak{g}$ and its dual basis element $x_j^*\in\frak{g}^*$ must be the same, we may conclude that
\begin{center} $\Omega_{k,i}\circ \vep_a=\vep_a\circ \Omega_{k,i},$
for all $k\neq a, a+1$.\end{center}
Consequently, $(P.5)$(b) is equivalent to
$$(\Omega_{a,i}+\Omega_{a+1,i})\circ \vep_a=\vep_a\circ (\Omega_{a,i}+\Omega_{a+1,i}).$$
The last equality follows from Lemma \ref{lem::ComLem1} and $(P.5)$(b) is verified.

We now check $(P.7)$. Let $m\in M\otimes V^{\otimes a-1}$ and $v\in V^{\otimes d-a-1}$ be homogenous elements. Then for all $i,j \in I$, we have
  \begin{align*}
  &(y_{a+1}\left(s_{a}(m\otimes e_i\otimes e_j\otimes v)\right)) = y_{a+1}\left((-1)^{\ov{e_i}\ov{e_j}}m\otimes e_j \otimes e_i\otimes v\right)\\
  & =2\sum_{k}\left((-1)^{\ov{e_i}\ov{e_j}+(\ov{m}+\ov{e_j})\ov{x_k}}x_k(m\otimes e_j) \otimes x_k^*e_i\otimes v\right)  \\
  & =2\sum_{k}\left((-1)^{\ov{e_i}\ov{e_j}+(\ov{m}+\ov{e_j}) \ov{x_k} }x_km\otimes e_j \otimes x_k^*e_i\otimes v\right) + 2 \sum_{k}\left((-1)^{\ov{e_i}\ov{e_j}+\ov{e_j}\ov{x_k}}m\otimes x_ke_j \otimes x_k^*e_i\otimes v\right) \\
  &=  s_ay_a(m\otimes e_i\otimes e_j\otimes v)+ 2\sum_{k}((-1)^{\ov{e_i}\ov{e_j}+\ov{e_j}\ov{x_k}}m\otimes x_ke_j \otimes x_k^*e_i\otimes v) \\
    &=  s_ay_a(m\otimes e_i\otimes e_j\otimes v)+ m\otimes \left((-1)^{\ov{e_i}\ov{e_j}}2C_{V,V}(e_j\otimes e_i)\right)\otimes v. \\
    &=  s_ay_a(m\otimes e_i\otimes e_j\otimes v)+ m\otimes \left((s+\vep)\circ s(e_i\otimes e_j)\right)\otimes v.\\
    &=  (s_ay_a+\text{Id}+\vep_a)(m\otimes e_i\otimes e_j\otimes v),
  \end{align*} where the last equality comes from $(P.6)$(a).
  The other equality of $(P.7)$ can be proved in a similar method.
\end{proof}


As a consequence, the following $\mf p(n)$ analogue of the Arakawa-Suzuki functor (cf.~\cite{AS}) is established.
 \begin{thm} \label{thm:first} Let $M$ be any $\mf p(n)$-module.
Then we have the following right action of $\Pd$ on $M\otimes V^{\otimes d}$:
 \begin{align} v\cdot  s_a:=s_a(v),  \quad  v\cdot  \vep_a:=\vep_a(v), \quad   v\cdot  y_j:=y_j(v),\label{AffinePnAction} \end{align}
for all $v\in M\otimes V^{\otimes d}$, $1\leq a\leq d-1$, $1\leq j\leq d$.
In other words, there is an algebra homomorphism
\begin{align} \label{Thm::ASThm}
\Psi_M:\Pd  \longrightarrow \End_{\mf g}(M \otimes V^{\otimes d})^{\emph{opp}}.
\end{align}
 \end{thm}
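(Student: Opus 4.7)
The proof plan is essentially to package the preceding results into the universal property of the presentation of $\Pd$. By Lemma \ref{BrauerAction}, the operators $s_a,\vep_a,y_j$ defined in \eqref{VWact} all belong to $\End_{\mf g}(M\otimes V^{\otimes d})$, so the target of $\Psi_M$ makes sense. Proposition \ref{not4} then verifies the relations $(P.1)$--$(P.3)$ and $(P.5)$--$(P.8)$ as operator identities on $M\otimes V^{\otimes d}$. The only remaining relation, $(P.4)$, is derived in Lemma \ref{relation4} as a purely formal algebraic consequence of $(P.3)$, $(P.5)$, and $(P.8)$; the identical manipulation, applied to the operators themselves, therefore yields $(P.4)$ automatically at the operator level.

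Given that every defining relation of $\Pd$ holds for these operators, the universal property of the presentation-by-generators-and-relations produces a unique algebra homomorphism into $\End_{\mf g}(M\otimes V^{\otimes d})$. The ``opposite'' symbol in the target simply records the convention of a right action: setting $v \cdot x := x(v)$ forces $v \cdot (xy) = (xy)(v)$ to equal $(v \cdot x) \cdot y = y(x(v)) = (y \circ x)(v)$, so the multiplication in $\Pd$ corresponds to reversed composition of operators. Note that the relations in Proposition \ref{not4} are stated in a mirror-symmetric form that is automatically compatible with this reversal: the two equations of $(P.7)$ are images of each other under it, and similarly for the pairings within $(P.6)$ and within $(P.8)$. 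Together this yields the desired $\Psi_M$, and transposing gives the right action \eqref{AffinePnAction}.

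The only real work sits upstream, inside Proposition \ref{not4} (and its appendices handling $(P.8)$), where the signs coming from the $\Z_2$-grading and the explicit formula for $C$ must be tracked carefully; the present theorem is then a bookkeeping step that repackages those verifications as an algebra homomorphism to the opposite algebra.
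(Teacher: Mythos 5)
Your overall structure matches the paper's proof exactly: Theorem \ref{thm:first} is obtained by combining Lemma \ref{BrauerAction} (the operators lie in $\End_{\mf g}(M\otimes V^{\otimes d})$), Proposition \ref{not4} (relations $(P.1)$--$(P.3)$, $(P.5)$--$(P.8)$), and Lemma \ref{relation4} (so $(P.4)$ comes for free), and then invoking the universal property of the presentation. That part is fine.

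However, your justification for why the target is the \emph{opposite} algebra contains a genuine error. You claim the defining relations are ``mirror-symmetric,'' i.e.\ invariant under the anti-automorphism that fixes the generators and reverses words, and that the two equations in each of $(P.6)$, $(P.7)$, $(P.8)$ are exchanged by it. They are not: reversing $(P.8)$(a), $\vep_a(y_a-y_{a+1})=\vep_a$, yields $(y_a-y_{a+1})\vep_a=\vep_a$, whereas $(P.8)$(b) asserts $(y_a-y_{a+1})\vep_a=-\vep_a$; if both held one would get $2\vep_a=0$. The same sign discrepancy occurs for the pair in $(P.6)$(b) and for the two equations of $(P.7)$ (reversing $s_ay_a-y_{a+1}s_a=\vep_a-1$ gives $y_as_a-s_ay_{a+1}=\vep_a-1$, not $-\vep_a-1$). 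So the generator-fixing reversal does \emph{not} extend to an anti-automorphism of $\Pd$, and one cannot pass freely between a left and a right action. The reason the theorem nevertheless holds is that the operator identities actually established in Proposition \ref{not4} and the appendices are already the ones required for the right-action convention $v\cdot(xy)=y(x(v))$: for instance, the displayed computation for $(P.7)$ proves $y_{a+1}\circ s_a=s_a\circ y_a+\mathrm{Id}+\vep_a$, which is precisely the relation $y_as_a-s_ay_{a+1}=-\vep_a-1$ read in $\End_{\mf g}(M\otimes V^{\otimes d})^{\mathrm{opp}}$, and the appendix computes $(y_a-y_{a+1})\circ\vep_a=\vep_a$ as the opposite-algebra form of $(P.8)$(a). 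You should replace the symmetry argument by this observation; as written, that step of your proof would fail.
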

\begin{proof}
By Lemma~\ref{BrauerAction}, Proposition~\ref{not4}, and Lemma~\ref{relation4}, all defining relations of $\Pd$ are preserved under $\Psi_M$.
\end{proof}

Let $\Pd$-mod denote the category of $\Pd$-modules. Observe that Theorem \ref{thm:first} defines a functor 
$$\mc F(\bullet):=\bullet \otimes V^{\otimes d}$$ 
from the category of $\frak{g}$-modules to $\Pd$-mod. The properties of $\mc F$ are studied in \cite{B+9}.

\section{Tensor product representations}\label{Section::Tensors}
In this section, we evaluate the functor $\mc F$ in Theorem \ref{thm:first} at various $\frak{g}$-modules $M=V^{\otimes k}$ for $k\in\Z_+$ to obtain a connection between tensor product representations of $\mf p(n)$ and $\Pd$.

We start with the simplest case where $M=\C$, the trivial representation. It turns out that the image $\Psi_\C(\Pd)$ is in fact the periplectic Brauer algebra in \cite{Co, Mo}.

Firstly we recall the Brauer $(d,d)$-diagram realization of $A_d$ in \cite{Co, KT, Mo}. A Brauer $(d,d)$-diagram is a graph with two rows of $d$ vertices $\{1,2,\ldots, d\}$ and $\{\ov 1, \ov 2, \ldots, \ov d\}$, $i$ above $\ov i$ for all $1\leq i \leq d$, and $d$-edges such that each vertex is connected to precisely one edge. That is, Brauer $(d,d)$-diagrams correspond to all partitions of $2d$-dots into pairs. By definition, the set $\mc G(d)$ of all Brauer $(d,d)$-diagrams forms a basis of Brauer algebras in \cite{Br}.

 Let $\mc A$ be the periplectic Brauer category in which objects are positive integers and morphisms are $(d,d)$-Brauer diagrams, see, e.g., \cite[Section 2.1]{Co}. Then $$A_d = \text{End}_{\mc A}(d).$$  We recall the generators $\widehat{s}_i$ and $\widehat{\vep}_i$ for $A_d$. For each $1\leq i \leq d-1$, denote by $\widehat{s}_i:= (i, i+1)\in A_d$ the Brauer diagram with a line connecting the upper vertex $i$ to the lower vertex $\ov{i+1}$, and with a line connecting the upper vertex $i+1$ to the lower vertex $\ov{i}$, and a line connecting $j$ to $\ov{j}$ for each $j\neq i,i+1$. Also, let $\widehat{\vep}_i:=\ov{(i,i+1)} \in A_d$ correspond to the Brauer diagram which consists only non-crossing propagating lines except for one cup and cap, connecting $\{i,i+1\}$ and $\{\ov i, \ov {i+1}\}$, respectively. The other elements $(i,j)$ and $\ov{(i,j)}$ in $A_d$ are defined analogously (see, e.g., \cite[Section 2.1.5]{Co}).
For example,

$$
\begin{tikzpicture}[scale=0.7,thick,>=angle 90]
\begin{scope}[xshift=4cm]
\draw (-6,0.5) node[] {$\widehat{\varepsilon}_i:=$};
\draw  (-5,0) -- +(0,1);
\draw [dotted] (-4.65,.5) -- +(0.4,0);
\draw  (-4,0) -- +(0,1);
\draw (-3.2,0) to [out=90, in=180] +(0.3,0.3);
\draw (-2.6,0) to [out=90, in=0] +(-0.3,0.3);
\draw (-3.2,1) to [out=-90, in=180] +(0.3,-0.3);
\draw (-2.6,1) to [out=-90, in=0] +(-0.3,-0.3);
\draw  (-1.8,0) -- +(0,1);
\draw [dotted] (-1.5,.5) -- +(0.4,0);
\draw  (-0.85,0) -- +(0,1);
\put(12,22){\tiny{$i$}}\put(22,22){\tiny{$i+1$}}
\put(12,-10){\tiny{$\overline{i}$}}\put(22,-10){\tiny{$\overline{i+1}$}}

\draw (2,0.5) node[] {$\widehat{s}_i:=$};
\draw  (3,0) -- +(0,1);
\draw (4.6,0) to [out=65,in=-115] +(0.6,1);
\draw (5.2,0) to [out=115,in=-65] +(-0.6,1);
\draw [dotted] (3.35,.5) -- +(0.4,0);
\draw  (4,0) -- +(0,1);
\draw  (6,0) -- +(0,1);
\draw [dotted] (6.3,.5) -- +(0.4,0);
\draw  (7,0) -- +(0,1);
\put(167,22){\tiny{$i$}}\put(178,22){\tiny{$i+1$}}
\put(167,-10){\tiny{$\overline{i}$}}\put(178,-10){\tiny{$\overline{i+1}$}}
\end{scope}
\end{tikzpicture}
$$

 In \cite{Mo}, Moon proved that there is a tensor product representation for $A_d$ acting as a subalgebra of centralizer:
\begin{align} \label{Moonsmapping}
\psi:A_d \rightarrow \text{End}_{\mf p(n)}(V^{\otimes d }).
\end{align}
The homomorphism $\psi$  in \eqref{Moonsmapping} is surjective for all $n,d\in \N$ (see, e.g., \cite[Lemma 8.3.3]{Co}). Furthermore, $\psi$ is an isomorphism if $n \geq d$  \cite[Theorem~4.5]{Mo}.

Set $M=\C$, the trivial representation, in Theorem~\ref{thm:first} and we have the tensor product representation of $\Pd$ on $\C \otimes V^{\otimes d} \cong V^{\otimes d}$.
In particular, by (\ref{VWact}), $\Psi_\C(s_a), \Psi_\C(\vep_a)$ coincide with the image of the generators $\widehat{s}_a$, $\widehat{\vep}_a$ in \eqref{Moonsmapping}.
That is, we have  $\Psi_\C(s_a) = \psi(\widehat{s}_a), \Psi_\C(\vep_a) = \psi(\widehat{\vep}_a)$.

Next we consider the image $\Psi_\C(y_j)$. By (\ref{VWact}) again, $\Psi_\C(\Omega_{0,j})=0$ for all $j$. In particular, $\Psi_\C(y_1)$ is the zero operator.
Using the defining relations in $A_d$ \cite[Proposition 2.1]{Mo}, we have the following identities for any $1\leq i<j\leq d$:
$$\psi(\,\,\ov{(i,j)}\,\,) = \psi((i,i+1))\cdots \psi((j-2,j-1))\cdot\psi(\,\,\ov{(j-1,j)}\,\,)\cdot\psi((j-2,j-1))\cdots \psi((i,i+1)).$$
This implies that $\Psi_\C(\Omega_{ij})=\psi((i,j))+\psi(\ov{(i,j)})$, for all $1\leq i<j\leq d$.
By (\ref{def::PolynomialVars}), we have
$$
\Psi_\C(y_j)=\sum_{0\leq k\leq j-1}\Psi_\C(\Omega_{k,j})=\sum _{1\leq k\leq j-1}\psi((k,j))+\psi(\,\,\ov{(k,j)}\,\,).
$$

Recall that the Jucys-Murphy elements $\{z_j \,|\, 1\leq j\leq d\}$ of $A_d$ are defined in \cite[Section 6.1.1]{Co} by setting $z_1=0$ and
\begin{align} \label{Eq::JM-element} &z_j:= \sum_{1\leq k\leq j-1}(k,j)+\ov{(k , j)}.\end{align}
Our discussion above shows that $\Psi_{\C}(y_j) = \psi(z_j)$ for all $1\leq j\leq d$.

In fact, $A_d$ is a homomorphic image of $\Pd$, where the Jucys-Murpy elements $z_j\in A_d$ are precisely the image of the polynomial generators $y_j\in \Pd$. This is parallel to the same phenomenon appeared in the degenerate affine Hecke algebra and the group algebra of the symmetric group. A similar phenomenon also appeared in the degenerate affine Nazarov-Wenzel algebra and the Brauer algebra, see \cite[Section~4]{Na}.

\begin{thm} \label{Thm::Quotient}
The map $\pi:\Pd\longrightarrow A_d$ given by \begin{align} \pi(s_a)=\widehat{s}_a, ~\pi(\vep_a)=\widehat{\vep}_a,~ \pi(y_j)=z_j, \end{align} is an algebra epimorphism.
\end{thm}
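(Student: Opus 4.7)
The plan is to verify that each defining relation of $\Pd$ in Definition \ref{def:VW} survives the substitution $s_a \mapsto \widehat{s}_a$, $\vep_a \mapsto \widehat{\vep}_a$, $y_j \mapsto z_j$ as an identity in $A_d$; surjectivity will then be automatic, since $\widehat{s}_a$ and $\widehat{\vep}_a$ alone generate $A_d$. First, I would dispatch the relations involving no $y_j$ -- namely $(P.1)$, $(P.2)(a)(b)$, $(P.3)$, $(P.5)(a)$, and $(P.6)$ -- by observing that these are literally the defining relations of the periplectic Brauer algebra $A_d$ as presented in \cite{Mo, Co, KT}. Relation $(P.4)$ is redundant by Lemma \ref{relation4}. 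The real work is therefore to verify $(P.2)(c)$, $(P.5)(b)(c)$, $(P.7)$, and $(P.8)(a)(b)$, i.e.\ the ones that actually involve the polynomial generators $y_j$.

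For these remaining relations I would exploit the tensor product representation already at our disposal. Specializing Theorem \ref{thm:first} to $M = \mathbb{C}$ yields an algebra homomorphism $\Psi_{\mathbb{C}} \colon \Pd \to \End_{\mf g}(V^{\otimes d})^{\text{opp}}$, and the discussion preceding this theorem has already identified the images on generators:
\[
\Psi_{\mathbb{C}}(s_a) = \psi(\widehat{s}_a), \qquad \Psi_{\mathbb{C}}(\vep_a) = \psi(\widehat{\vep}_a), \qquad \Psi_{\mathbb{C}}(y_j) = \psi(z_j),
\]
where $\psi$ is Moon's representation from \eqref{Moonsmapping}. Consequently, applying $\Psi_{\mathbb{C}}$ to any defining relation of $\Pd$ produces the $\psi$-image of the corresponding expression in $A_d$ obtained by applying $\pi$ -- so each such $A_d$-expression vanishes in $\End_{\mf p(n)}(V^{\otimes d})$.

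To transport this vanishing back from $\End(V^{\otimes d})$ to $A_d$ itself, I would fix $n \geq d$; by \cite[Theorem 4.5]{Mo}, $\psi$ is then an isomorphism onto its image. Since $A_d$ is defined abstractly by generators and relations independently of $n$, the injectivity of $\psi$ forces the corresponding identities to hold already in $A_d$, which gives the well-definedness of $\pi$. The main technical point I expect to require care is bookkeeping around the opposite multiplication on $\End_{\mf g}(V^{\otimes d})^{\text{opp}}$ (arising from the right-action convention in Theorem \ref{thm:first}): one must confirm that $\psi$ is intertwined with this opp-convention -- equivalently, that $\psi$ and $\Psi_{\mathbb{C}}$ share the same side-of-action convention -- so that the identification on generators promotes consistently to arbitrary products. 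A cleaner alternative, should this subtlety prove cumbersome, is to verify the $y_j$-relations directly inside $A_d$ by diagrammatic manipulation, using the Jucys-Murphy identities recorded in \cite[Section 6.1]{Co}; this bypasses the opp-issue at the cost of more explicit case-by-case checking.
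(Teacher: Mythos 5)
Your proposal is correct, but it follows a genuinely different route from the paper's own proof. The paper verifies the $y$-involving relations directly inside $A_d$ by citing Coulembier's identities for the Jucys--Murphy elements: $(P.2)$(c) and $(P.5)$(b) from \cite[Lemma 6.3.3]{Co}, $(P.5)$(c) from \cite[Lemma 6.1.2]{Co}, $(P.4)$ from \cite[Corollary 6.3.4]{Co}, and $(P.7)$, $(P.8)$ from \cite[Lemma 6.3.1]{Co} together with $(P.6)$(a) --- essentially the ``cleaner alternative'' you sketch at the end. Your main argument, factoring $\pi$ as $\psi^{-1}\circ\Psi_\C$ after fixing $n\geq d$, is precisely what the authors record as ``a simple proof'' in the remark immediately following the theorem; it is legitimate because $A_d$ is presented by generators and relations independently of $n$, so verifying the relations in one faithful representation suffices, and it reuses the computation $\Psi_\C(y_j)=\psi(z_j)$ already carried out before the theorem. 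What the paper's route buys is independence from Moon's isomorphism theorem and from the side-of-action bookkeeping you rightly flag: the paper maps $\Psi_\C$ into $\End_{\mf g}(V^{\otimes d})^{\mathrm{opp}}$ but writes $\psi$ into $\End_{\mf p(n)}(V^{\otimes d})$ with no $\mathrm{opp}$, and since $(P.8)$(a) and $(P.8)$(b) are not order-reversals of one another (they differ by a sign), a convention mismatch would genuinely change which identities you end up verifying --- so that check is not merely cosmetic. Your observation that $(P.4)$ need not be verified separately, being a consequence of $(P.3)$, $(P.5)$ and $(P.8)$ by Lemma \ref{relation4}, is a small simplification over the paper, which checks it via \cite[Corollary 6.3.4]{Co}.
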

\begin{proof}
We check that the map $\pi$ preserves the defining relations $(P.1)$--$(P.8)$.
It suffices to check only the relations $(P.2)$(c), $(P.4)$, $(P.5)$(b), $(P.5)$(c), $(P.7)$, $(P.8)$(a), $(P.8)$(b). Relations $(P.2)$(c) and $(P.5)$(b) follow from \cite[Lemma 6.3.3]{Co}; $(P.5)$(c) follows from \cite[Lemma 6.1.2]{Co}; $(P.4)$ follows from \cite[Corollary 6.3.4]{Co}; $(P.7)$ follows from \cite[Lemma 6.3.1]{Co} together with $(P.6)$(a); $P.8$(a) and $(P.8)(b)$ follow from \cite[Lemma 6.3.1]{Co}.
\end{proof}

\begin{rem}
 When $n\geq d$, the homomorphism \eqref{Moonsmapping} is an isomorphism \cite[Theorem 4.5]{Mo}. In this situation, the map $\pi=(\psi)^{-1}\circ \Psi_\C$ gives a simple proof of the above theorem.
\end{rem}

Next we are concerned with more general tensor product representations of $\Pd$. Namely, we consider $\Psi_{M}$ with $M = V^{\otimes m}$, $m \in \Z_{+}$. The following is a $\mf{p}(n)$-analogue of results in \cite[Section~4]{Na}. In particular, the map $\pi_0$ is exactly the map $\pi$ in Theorem \ref{Thm::Quotient}.
\begin{thm}\label{shiftpi}
  For each $m \in \Z_+$, there is an algebra homomorphism $\pi_m:\Pd\longrightarrow A_{m+d}$ such that
  \begin{align}
  \pi_m(s_a)= \widehat{s}_{m+a}, ~\pi_m(\vep_a)=\widehat{\vep}_{m+a}, ~ \pi_m(y_j)=z_{m+j},
    \end{align} for each $1\leq a \leq d-1$ and $1\leq j\leq d$.
\end{thm}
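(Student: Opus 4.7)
The plan is to mimic the proof of Theorem~\ref{Thm::Quotient} (the $m=0$ case) by shifting all relevant indices by $m$. Writing $\pi_m(s_a)=\widehat{s}_{m+a}$, $\pi_m(\vep_a)=\widehat{\vep}_{m+a}$, $\pi_m(y_j)=z_{m+j}$, I would verify that the defining relations $(P.1)$--$(P.8)$ of $\Pd$ are satisfied by these images in $A_{m+d}$.

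I would first dispose of the relations not involving any $y_j$, namely $(P.1)$, $(P.2)(a,b)$, $(P.3)$, $(P.5)(a)$, and $(P.6)$. In the diagrammatic (or abstract) presentation of $A_{m+d}$ these relations are \emph{local}: they only constrain $\widehat{s}_b, \widehat{\vep}_b$ at two or three consecutive positions $b$. Hence the family $\{\widehat{s}_{m+a},\widehat{\vep}_{m+a}\}_{1\leq a\leq d-1}$ inside $A_{m+d}$ satisfies exactly the same relations as $\{\widehat{s}_a,\widehat{\vep}_a\}_{1\leq a\leq d-1}$ inside $A_d$, and there is nothing new to check.

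The main content is the verification of the relations that involve the polynomial generators $y_j$, namely $(P.2)(c)$, $(P.5)(b,c)$, $(P.7)$, and $(P.8)$. Here the idea is that the lemmas from \cite[\S 6]{Co} used in the proof of Theorem~\ref{Thm::Quotient} are formulated for the Jucys--Murphy elements $z_j$ at \emph{arbitrary} indices $j$ in an \emph{arbitrary} $A_N$, and in particular apply verbatim inside $A_{m+d}$ with the indices $a,a+1,j$ replaced everywhere by $m+a,m+a+1,m+j$. Concretely, $(P.2)(c)$ and $(P.5)(b)$ are the shifted form of \cite[Lemma~6.3.3]{Co}; $(P.5)(c)$ is commutativity of the $z_{m+j}$ from \cite[Lemma~6.1.2]{Co}; $(P.7)$ comes from \cite[Lemma~6.3.1]{Co} together with $(P.6)(a)$; and $(P.8)(a,b)$ are instances of \cite[Lemma~6.3.1]{Co}. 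Finally $(P.4)$ does not need a separate argument: by Lemma~\ref{relation4}, it is a formal consequence of $(P.3)$, $(P.5)$ and $(P.8)$, all of which have already been verified.

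The step I expect to require the most care is confirming that each Coulembier lemma I invoke is genuinely stated (or proved) for general indices rather than only for the small indices $1,2$; in \cite{Co} these identities are derived by inductive/diagrammatic arguments that are transparently index-uniform, so in practice no new computation should be needed. As a sanity check on the overall statement, one can also argue representation-theoretically: for $n\geq m+d$ the Moon map \eqref{Moonsmapping} is an isomorphism by \cite[Theorem~4.5]{Mo}, so taking $M=V^{\otimes m}$ in Theorem~\ref{thm:first} and composing with $\psi^{-1}$ produces a homomorphism $\Pd\to A_{m+d}$ whose action on generators, using the definition \eqref{VWact} and the computation $\Psi_{V^{\otimes m}}(y_j)=2C_{V^{\otimes(m+j-1)},V}=\psi(z_{m+j})$ (exactly as in the paragraph preceding Theorem~\ref{Thm::Quotient}), coincides with $\pi_m$. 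Since the presentation of $A_{m+d}$ is independent of $n$, this gives the desired $\pi_m$ for all $n$.
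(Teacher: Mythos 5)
Your proposal is correct and follows essentially the same route as the paper: the paper's proof likewise reduces everything to the observation that Coulembier's identities for the Jucys--Murphy elements hold at arbitrary indices in $A_{m+d}$, with the only genuinely new relation being the shifted form of $(P.4)$, which the paper gets directly from \cite[Corollary~6.3.4]{Co} (namely $\widehat{\vep}_i\, z_i^k\, \widehat{\vep}_i=0$ for all $i$) while you derive it formally from Lemma~\ref{relation4}. Your closing representation-theoretic argument via $M=V^{\otimes m}$ and $\psi^{-1}$ for $n\geq m+d$ is also sound and matches the remark the paper makes after Theorem~\ref{Thm::Quotient} in the $m=0$ case.
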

\begin{proof} In $A_d$, we have $\widehat{\vep_i} \, z_i^k \, \widehat{\vep_i} =0$ for all $1\leq i \leq m+d$ and $k\in \N$ by \cite[Corollary~6.3.4]{Co}. It implies that defining relations of $\Pd$ are all preserved under $\pi_m$ for any $m\in\Z_+$.
\end{proof}

 \vskip 1cm

\section{A PBW basis theorem for $\Pd$} \label{Section::PBWbasis}
In this section, we give a PBW type basis for $\Pd$ by adapting the approach in \cite[Theorem~4.6]{Na}.

\subsection{Regular monomials}
We first recall the notion of regular monomials defined by Nazarov in \cite[(4.18)]{Na}.
Recall that $A_d$ is a diagram algebra with basis $\mc G(d)$ of Brauer $(d,d)$-diagrams.
In the rest of this article, adapting the notation in $A_d$, we set
$(i,i+1):=s_i$, $(\ov{i,i+1}):=\vep_i$ to be the generators of $\Pd$, while
the general elements $(i,j)$ and $\ov{(i,j)}$ for $1\leq i<j\leq d$ in $\Pd$ are defined analogously.

Recall that any edge of a graph in $\mc G(d)$ connecting the vertices $\{i,j\}$ or connecting $\{\ov i, \ov j\}$ for some $i<j$ will be called a {\em horizontal edge}, while the vertex $j$ or $\ov{j}$ will be called a {\em right end}. A monomial $u\in \Pd$ in $s_a, \vep_b, y_i$ is called {\em regular} (see also Definition \ref{Der::RegularDotBrauerDiagram}) if  \begin{align}\label{def::RegularMonomials} u= y_1^{i_1}y_2^{i_2}\cdots y_d^{i_d} \cdot \gamma \cdot y_1^{j_1}y_2^{j_2}\cdots y_d^{j_d},\end{align} where  $\gamma \in \mc G(d)$ satisfying the following conditions:
\begin{align}
\text{if } &k\in \{r_1, \ldots r_q\} \text{ then } i_k =0, \label{1stRegularity}\\
\text{if } &j_t \neq 0 \text{ then } t\in \{\overline{r'_1}, \ldots \overline{r'_q}\},\label{2ndRegularity}
\end{align} where $r_1,r_2,\ldots ,r_q\in \{1,2,\ldots ,d\}$ (resp. $\overline{r'_1}, \ldots \overline{r'_q} \in \{\ov 1,\ov 2, \ldots ,\ov d\}$) are all upper (resp. lower) right ends of horizontal edges of $\gamma$. 
The following theorem is  the main result in this section.

\begin{thm}\label{thm::PBW}
  The set of all regular monomials forms a linear basis for $\Pd$.
\end{thm}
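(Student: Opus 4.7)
I will prove the theorem in two stages, following the blueprint of Nazarov's PBW theorem for the affine Wenzl algebra \cite[Theorem~4.6]{Na} adapted to the periplectic signs: first that the regular monomials span $\widehat{P}_d^-$, then that they are linearly independent.

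For spanning, I would argue by induction on word length together with an auxiliary statistic on total $y$-degree and diagram complexity, carried out in three stages. \emph{(a) Segregation of the $y$'s.} Using the commutation relations $(P.5)(c)$, $(P.2)(c)$, $(P.5)(b)$, which let $y_i$ pass through any generator with non-interacting index, together with the exchange relations $(P.7)$ and $(P.8)$, which swap $y_i$ past the remaining generators at the cost of an error of strictly smaller $y$-degree, every word in the generators reduces to a $\C$-linear combination of words of the form $y^\alpha\, w\, y^\beta$ with $w$ a word in $s_a,\vep_a$ only. \emph{(b) Diagram normalization.} The subalgebra of $\Pd$ generated by $\{s_a,\vep_a\}$ satisfies the defining relations of $A_d$ (those among $(P.1)$--$(P.8)$ with no $y$), so by Theorem~\ref{Thm::Quotient} and the diagram basis $\mc G(d)$ of $A_d$ from \cite{Mo,Co}, each $w$ can be rewritten as a linear combination of Brauer diagrams $\gamma\in\mc G(d)$. \emph{(c) Enforcing regularity.} If $y^\alpha\gamma y^\beta$ violates $(\ref{1stRegularity})$ because $\alpha_k\neq 0$ for some upper right-end $k$ of a horizontal edge $\{r,k\}$ of $\gamma$, I would commute $y_k^{\alpha_k}$ to the right through the vertical strands to reach the $\vep$-slice realizing the edge, then apply $(P.8)(a)$ to transfer the dot to the left endpoint, strictly decreasing $\alpha_k$ modulo regular monomials of smaller statistic. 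Symmetrically for $(\ref{2ndRegularity})$ using $(P.8)(b)$, with $(P.4)$ killing the terms in which the dot is trapped between matching cup and cap.

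For linear independence, the natural strategy is to exhibit a faithful representation separating distinct regular monomials. The cleanest candidate is the Arakawa--Suzuki functor $\Psi_M$ from Theorem~\ref{thm:first} evaluated at a generic Verma module $M(\lambda)$ over $\mf p(n)$ with $n\geqslant d$, regarded as a $\C[\lambda]$-module where $\lambda$ ranges over the highest-weight parameters. Under the decomposition $y_j=\sum_{0\leqslant k\leqslant j-1}\Omega_{k,j}$ from \eqref{def::PolynomialVars}, the term $\Omega_{0,j}$ contributes a linear function of $\lambda$, so on a fixed weight space of $M(\lambda)\otimes V^{\otimes d}$ the operators $y_1,\dots,y_d$ have eigenvalues that are polynomially independent in $\lambda$; combined with the faithfulness of $\psi:A_d\to\End_{\mf g}(V^{\otimes d})$ for $n\geqslant d$ from \cite[Theorem~4.5]{Mo} and the factorization $\pi_0=\psi^{-1}\circ\Psi_\C$, this separates the $\gamma$-component from the $y^\alpha,y^\beta$-components. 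An alternative is Bergman's diamond lemma applied to the presentation in Definition~\ref{def:VW}: list all overlap ambiguities and verify that each resolves to the same regular form; this is the route Moon used in the unaffinized case.

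The main obstacle is linear independence. The spanning reduction, though intricate, is a mechanical consequence of the defining relations and a well-founded statistic. Ruling out hidden linear relations, on the other hand, requires an honest faithful realization: in the tensor-product approach one must make the eigenvalue computation on $M(\lambda)\otimes V^{\otimes d}$ explicit enough to extract a triangular change-of-basis matrix against the regular monomials; in the diamond-lemma approach one must tabulate a substantial number of overlap ambiguities, since $(P.7)$ and $(P.8)$ interact nontrivially with the mixed relations $(P.6)$ and with $(P.4)$.
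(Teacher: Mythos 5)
Your spanning argument is essentially the paper's: the segregation of the $y$'s, the reduction of the diagram part to $\mc G(d)$, and the use of $(P.8)$ to push dots off forbidden right ends are exactly the content of Lemma \ref{NazarovLemma44}, Lemma \ref{NazarovLemma45} and Corollary \ref{NazarovLemma46}, carried out in the associated graded algebra for the filtration with $\deg y_j=1$. That half of your proposal is sound.

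The gap is in linear independence, which you correctly identify as the crux but do not actually prove; moreover, neither of your two candidate strategies is the one that works here, and both face real obstacles. The Verma-module route rests on the claim that the $y_j$ act on weight spaces of $M(\lambda)\otimes V^{\otimes d}$ with eigenvalues ``polynomially independent in $\lambda$.'' For $\mf p(n)$ this is far from automatic: the element $C$ of Proposition \ref{Prop::SC} lives in $\frakg\otimes\frakgl(n|n)$, not in $U(\frakg)$, so $y_1=2C_{M,V}$ is not a central element acting by scalars on highest-weight constituents, the relevant categories are not semisimple, and no spectral decomposition is supplied; the phrase ``separates the $\gamma$-component from the $y^\alpha,y^\beta$-components'' is not an argument. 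The diamond-lemma route is likewise only named, and the ambiguity check (which is what would constitute the proof) is not performed. The paper avoids both difficulties by a purely combinatorial device: given $F=\sum_k\alpha_kF_k$ with $m$ the maximal total $y$-degree, it applies the \emph{shifted} epimorphism $\pi_m:\Pd\to A_{m+d}$ of Theorem \ref{shiftpi}, under which $y_j\mapsto z_{m+j}=\sum_{k<m+j}(k,m+j)+\ov{(k,m+j)}$, then projects onto the span of those $(m+d,m+d)$-Brauer diagrams having exactly the minimal number of horizontal edges and no edges supported on the first $m$ strands. Expanding each $z_{m+j}^{i_j}$ kills all terms of degree $<m$, and among the survivors one isolates ``leading terms'' in which every factor is a transposition $(N,m+\ell)$ with the $N$'s exhausting $\{1,\dots,m\}$ bijectively; Claims 2 and 3 of the paper's proof then recover $(\gamma^{(k)};i_\bullet;j_\bullet)$ uniquely from such a diagram by tracing mates, using regularity of $\gamma^{(k)}$. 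This needs no assumption $n\geq d$, no Verma modules, and no ambiguity tabulation. To complete your proof you would need either to carry out this $\pi_m$-plus-projection argument or to genuinely establish one of your two alternatives, which as stated remain programs rather than proofs.
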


The rest of this section is devoted to the proof of Theorem \ref{thm::PBW}.
We first equip $\Pd$ with a filtration by setting degrees of the generators as follows:
\begin{align*}
\text{deg}(s_a) = \text{deg}(\vep_a) =0,~ \text{deg}(y_j) =1.
\end{align*}
Denote by $w_a$ the image of $y_a$ in the corresponding graded algebra $\text{gr} \Pd$.  The following  identity in $\text{gr} \Pd$ comes immediately from $(P.2)$(c) and $(P.7)$:
\begin{align} \label{FirEq}
\tau w_a\tau^{-1} = w_{\tau(a)}, \quad \tau\in \mf S_d.
\end{align}
By \eqref{FirEq} and $(P.4)$, it follows that \begin{align} \overline{{(i,j)}} w_i^k \overline{{(i,j)}} =0,  \label{2ndEq} \end{align} for all $k\in \N$ and $1\leq i\neq j\leq d$.
By \eqref{FirEq}, $(P.2)$(c), $(P.5)$(b), $(P.7)$, $(P.8)$(a) and $(P.8)$(b), we obtain the following identities:
\begin{align}
&\overline{{(i,j)}}w_a =w_a\overline{{(i,j)}}, \text{ for $a \neq i, j$},\label{3rdEq} \\
&\overline{{(i,j)}}(w_i - w_j) = 0 =  (w_i - w_j)\overline{{(i,j)}},\text{ for $i \neq j$}. \label{4thEq}
\end{align}
The  identities \eqref{FirEq}, \eqref{2ndEq}, \eqref{3rdEq} and \eqref{4thEq} provide  all ingredients for the following two lemmas.
\begin{lemma}\label{NazarovLemma44}\cite[Lemma 4.4]{Na}
  Let $w$ be a monomial in $w_1,w_2,\ldots, w_d$. Let $\gamma,\gamma' \in \mc G(d)$, then we have the following equality in $\emph{gr}\Pd$:
$$\gamma w  \gamma' =\varepsilon w' \gamma \gamma' w'',$$ where $w',w''$ are monomials in $w_1,w_2,\ldots ,w_d$ and $\varepsilon\in\{0,1\}$.
\end{lemma}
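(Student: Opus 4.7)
The plan is to adapt Nazarov's argument \cite[Lemma 4.4]{Na} using the four identities \eqref{FirEq}--\eqref{4thEq} valid in $\gr\Pd$. The periplectic-specific feature is \eqref{2ndEq}: a ``trapped'' polynomial factor between matching caps vanishes outright, rather than becoming a power of a loop parameter $\delta$ as in Nazarov's original setting, which is why $\varepsilon$ takes values only in $\{0,1\}$ here.

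The induction is on the total degree $r$ of $w$. The case $r=0$ gives $\varepsilon=1$, $w'=w''=1$ trivially. For $r\geq 1$, write $w = w_i\tilde w$ and push the leading factor $w_i$ through $\gamma$ from right to left, one generator at a time. If the rightmost generator is $s_a$, then \eqref{FirEq} yields $s_a w_i = w_{s_a(i)}s_a$ and $w_i$ passes through with an index shift. If it is $\vep_a$ with $i\notin\{a,a+1\}$, then \eqref{3rdEq} commutes it through. If it is $\vep_a$ with $i\in\{a,a+1\}$, then \eqref{4thEq} lets us replace $w_i$ by the partner index if needed, but $w_i$ is now ``stuck'' just to the right of this cap and cannot pass further. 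A symmetric procedure pushes factors of $\tilde w$ rightward through $\gamma'$. Iterating over every letter of $w$ and invoking the inductive hypothesis on residual shorter monomials, the product $\gamma w\gamma'$ is rewritten in the form $w'\cdot\gamma\cdot u\cdot\gamma'\cdot w''$ for some monomials $w',w''$ in the $w_i$'s and a monomial $u$ whose factors are all stuck against caps inside $\gamma$ or $\gamma'$.

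To finish, classify each factor of $u$ by which strand of the composed diagram $\gamma\gamma'$ it sits on. If that strand forms a closed loop in $\gamma\gamma'$, then the factor is sandwiched between two matching caps (one from $\gamma$ and one from $\gamma'$) and relation \eqref{2ndEq} collapses it to $0$, yielding $\varepsilon=0$. Otherwise the factor lies on an open strand of $\gamma\gamma'$, in which case relations \eqref{3rdEq}--\eqref{4thEq} allow it to slide along the strand and emerge on the far left or far right, contributing to $w'$ or $w''$; this yields $\varepsilon=1$.

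The main obstacle is the combinatorial bookkeeping of index shifts. This is streamlined by first writing $\gamma=\sigma\cdot e\cdot\tau$ and $\gamma'=\sigma'\cdot e'\cdot\tau'$ in the normal form where $\sigma,\tau,\sigma',\tau'\in\mf S_d$ and $e, e'$ are products of pairwise-disjoint caps: the permutation factors then contribute only index relabelings via \eqref{FirEq}, the cap factors interact only via \eqref{2ndEq}--\eqref{4thEq}, and the final dichotomy $\varepsilon\in\{0,1\}$ reduces to the purely diagrammatic question of whether any dot in $w$ is transported onto a closed loop of $\gamma\gamma'$.
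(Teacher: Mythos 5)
Your overall strategy is the same as the paper's: normalize $\gamma,\gamma'$ using \eqref{FirEq}, transport the dots with \eqref{3rdEq}--\eqref{4thEq}, and kill trapped dots with \eqref{2ndEq}. But the closed-loop half of your final dichotomy has a real gap. Relation \eqref{2ndEq} only annihilates a configuration $\overline{(i,j)}\,w_i^k\,\overline{(i,j)}$ in which the \emph{same} pair flanks the dot, i.e.\ a loop of length $2$ in $\gamma\gamma'$. On a longer loop --- say caps of $\gamma$ at $\{1,2\}$ and $\{3,4\}$ and cups of $\gamma'$ at $\{2,3\}$ and $\{4,1\}$, with a dot at the middle vertex $1$ --- the two edges meeting the dotted vertex are \emph{not} matching, so \eqref{2ndEq} does not apply; and your sliding mechanism fails to terminate there, since repeated use of \eqref{4thEq} and \eqref{3rdEq} merely cycles the dot $1\to 4\to 3\to 2\to 1\to\cdots$ around the loop forever (a closed strand has no boundary endpoint through which the dot can escape). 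Your induction is on $\deg w$ alone, which does not decrease during this cycling, so nothing in your argument forces the process to stop.

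The missing ingredient is a second induction parameter: the number of horizontal edges (this is exactly how the paper, following Nazarov, closes the argument). Concretely, when the dot sits at a vertex $k$ shared by a cap $\overline{(k,\ell)}$ of $\gamma$ and a cup $\overline{(k,\ell')}$ of $\gamma'$ with $\ell\neq\ell'$, one uses \eqref{4thEq} twice to collect all relevant exponents onto $w_\ell$ and then \eqref{3rdEq} to pull $\overline{(k,\ell')}$ leftward past $w_\ell$, obtaining $\hat\gamma\,\overline{(k,\ell)}\,\overline{(k,\ell')}\,w_\ell^{N}(\cdots)\hat\gamma'$; the pair $(\hat\gamma\,\overline{(k,\ell)}\,\overline{(k,\ell')},\,\hat\gamma')$ has strictly fewer horizontal edges on the right, so the induction on $\min\{r,s\}$ applies. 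Iterating contracts a long loop one edge at a time until the two flanking edges \emph{do} match, at which point \eqref{2ndEq} finally gives $0$. With this extra induction added, your argument becomes the paper's proof; without it, the closed-loop case is unproved.
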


\begin{proof}
Let $2r$ and $2s$ be the number of horizontal edges of $\gamma$ and $\gamma^\prime$, respectively. We prove the statement by induction on $\min\{r,s\}$ and on the degree of $w$. The case when $r=s=0$ or $\text{deg}~w=1$ are trivial. Assume that $r,s\geq 1$ and $\text{deg}~w\geq 1$. We explain the case when $s\leq r$ explicitly here, while the case can be deduced in a very similar way.

By defining relations, we may suppose that $\gamma=\overline{{(k_1,\ell_1)}} \cdots \overline{{(k_r,\ell_r)}} \tau_1$ and 
$\gamma^\prime=\overline{{(k_1^\prime,\ell_1^\prime)}} \cdots \overline{{(k_s^\prime,\ell_s^\prime)}} \tau_2$ 
for some $\tau_1,\tau_2\in \mf S_d$, where $k_1,\ell_1,\ldots,k_r,\ell_r$ are pairwise distinct and so are $k_1^\prime,\ell_1^\prime,\ldots,k_s^\prime,\ell_s^\prime$.  
By equation (\ref{FirEq}), we may further assume that $\tau_1=\tau_2=1$.

Consider the monomial $w=w_1^{i_1}\, \cdots \, w_n^{i_n}.$   
Choose any index $1\leq k\leq n$ such that $i_k\neq 0$. 
If $k\notin \{ k_1,\ell_1,\ldots, k_r,\ell_r\}$, then $\gamma w_k^{i_k}= w_k^{i_k}\gamma$ by (\ref{3rdEq}), and we have done by induction on the degree of $w$. Similarly, if $k\notin \{ k_1^\prime ,\ell_1^\prime ,\ldots, k_s^\prime,\ell_s^\prime\}$, then $w_k^{i_k}\gamma^\prime= \gamma^\prime w_k^{i_k}$ and we have done.

Now we assume that $k=k_j=k_h^\prime$ for some $1\leq j\leq r$ and $1\leq h\leq s$. Let $\ell=\ell_j$ and let $\ell^\prime=\ell_h^\prime$.
Since $k_1,\ell_1,\ldots,k_r,\ell_r$ and $k_1^\prime,\ell_1^\prime,\ldots,k_s^\prime,\ell_s^\prime$ are pairwise distinct, we may suppose that 
$$\gamma=\hat\gamma \,\, \overline{{(k,\ell)}} \quad \text{and} \quad 
\gamma^\prime=\overline{{(k,\ell^\prime)}} \,\, \hat\gamma^\prime,$$ 
where $\hat\gamma$ and $\hat\gamma^\prime$ are the graphs obtained by removing the factor $\overline{{(k,\ell)}}$ in $\gamma$ and removing the factor $\overline{{(k,\ell^\prime)}}$ in $\gamma^\prime$, respectively.

Suppose that $\ell=\ell^\prime$. By $(\ref{2ndEq})$, $(\ref{3rdEq})$ and $(\ref{4thEq})$, we have
\begin{align*}
\gamma w \gamma^\prime & =   \hat\gamma \,\, \overline{{(k,\ell)}} \,\, w_k^{i_k} w_\ell^{i_\ell} \prod_{m\neq k,\ell} w_m^{i_m} \,\,\overline{{(k,\ell)}} \,\,\hat\gamma^\prime\\
 &= \hat\gamma \,\, \overline{{(k,\ell)}} \,\, w_\ell^{i_k+i_\ell} \,\, \overline{{(k,\ell)}} \,\, \prod_{m\neq k,\ell} w_m^{i_m} \,\, \hat\gamma^\prime=0.
\end{align*}
Finally, suppose that $\ell\neq\ell^\prime$. By a similar argument, we have
\begin{align*}
\gamma w \gamma^\prime & =   \hat\gamma \,\, \overline{{(k,\ell)}} \,\, w_k^{i_k} w_\ell^{i_\ell} u_{\ell^\prime}^{i_{\ell^\prime}} \,\, \prod_{m\neq k,\ell, \ell^\prime} w_m^{i_m} \,\, \overline{{(k,\ell^\prime)}} \,\, \hat\gamma^\prime\\
& =   \hat\gamma \,\, \overline{{(k,\ell)}} \,\, w_k^{i_k+i_{\ell^\prime}} w_\ell^{i_\ell} \,\, \overline{{(k,\ell^\prime)}} \,\, \prod_{m\neq k,\ell,\ell^\prime}  w_m^{i_m}  \,\, \hat\gamma^\prime\\
 &= \hat\gamma \,\, \overline{{(k,\ell)}} \,\, w_\ell^{i_k+i_\ell+i_{\ell^\prime}} \,\, \overline{{(k,\ell^\prime)}} \,\, \prod_{m\neq k,\ell,\ell^\prime} w_m^{i_m}  \,\, \hat\gamma^\prime\\
  &= \hat\gamma \,\, \overline{{(k,\ell)}} \,\, \overline{{(k,\ell^\prime)}} \,\, w_\ell^{i_k+i_\ell+i_{\ell^\prime}} \,\,\prod_{m\neq k,\ell,\ell^\prime} w_m^{i_m} \,\, \hat\gamma^\prime.
  \end{align*}
Note that the number of horizontal edges in $\hat\gamma^\prime$ is less than $2s$ and the induction applies.
\end{proof}

\begin{lemma}\label{NazarovLemma45}\cite[Lemma 4.5]{Na}
Let $w,w'$ be monomials in $w_1,w_2,\ldots, w_d$. Then we have the following equality in $\emph{gr}\Pd$:
  $$w\gamma w' =  w_1^{i_1} \cdots w_d^{i_d}\gamma w_1^{j_1}\cdots w_{d}^{j_d},$$ where the exponents $i_1, \ldots, i_d$ and $j_1,\ldots ,j_d$ satisfy \eqref{1stRegularity} and \eqref{2ndRegularity}.
\end{lemma}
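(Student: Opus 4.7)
The plan is to prove the lemma by iteratively reducing the exponents of $w$ and $w'$ until the regularity conditions \eqref{1stRegularity} and \eqref{2ndRegularity} are met, following the strategy of \cite[Lemma~4.5]{Na}. Since $w_iw_j=w_jw_i$ in $\gr\Pd$ by (P.5)(c), we may assume from the outset that $w=w_1^{a_1}\cdots w_d^{a_d}$ and $w'=w_1^{b_1}\cdots w_d^{b_d}$; the task is then to rewrite $w\gamma w'$ as an element of the same shape so that $a_k=0$ whenever $k$ is an upper right end of a horizontal edge of $\gamma$, and $b_t=0$ whenever $t$ is not a lower right end.

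The rewrites rest on three identities in $\gr\Pd$: (I) if $\{p,k\}$ is a top horizontal edge of $\gamma$ with $p<k$, then $w_k\gamma=w_p\gamma$; (II) if $\{t,u\}$ is a bottom horizontal edge with $t<u$, then $\gamma w_t=\gamma w_u$; and (III) if $\ov t$ lies on a through-line from an upper vertex $s$ of $\gamma$, then $\gamma w_t=w_s\gamma$. Identities (I) and (II) follow at once from the diagrammatic factorizations $\gamma=\overline{(p,k)}\,\gamma_0$ and $\gamma=\gamma_0'\,\overline{(t,u)}$ (of the same kind already exploited in the proof of Lemma~\ref{NazarovLemma44}) combined with \eqref{4thEq}. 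For (III), one writes $\gamma=\tau\cdot\overline{(k_1',\ell_1')}\cdots\overline{(k_r',\ell_r')}$ with $\tau\in\mf S_d$ and $\{\ov{k_j'},\ov{\ell_j'}\}$ exhausting the bottom horizontal edges; the hypothesis $t\notin\{k_j',\ell_j'\}$ for all $j$ allows \eqref{3rdEq} to commute $w_t$ past every $\overline{(k_j',\ell_j')}$, and then \eqref{FirEq} gives $\tau w_t=w_{\tau(t)}\tau=w_s\tau$, hence $\gamma w_t=w_s\gamma$.

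With (I)--(III) in hand, the algorithm has three stages. First, apply (III) to every factor of $w'$ whose index corresponds to a through-line; each such step strictly decreases $\deg(w')$ by moving one exponent onto the left factor. Second, apply (II) to each $w_t$ with $t$ a lower left end, strictly decreasing $\sum_{t\text{ lower left end}}b_t$ while preserving $\deg(w')$; this leaves $w'$ satisfying \eqref{2ndRegularity}. Third, apply (I) to each factor of the (possibly enlarged) left monomial indexed by an upper right end $k$, replacing it by $w_p$ for the matched left end $p<k$; since $p$ lies on a single edge in which it is the left end, $p$ is not an upper right end, so $\sum_{k\text{ upper right end}}a_k$ strictly decreases and the process terminates. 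Critically, this last stage only modifies the left factor, so the regularity of $w'$ achieved earlier persists. Re-collecting via commutativity of the $w_i$'s then yields the stated normal form.

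The main obstacle is not really conceptual but rather the bookkeeping needed to check that in each of (I)--(III) the target index ($p$, $u$, or $s$) is strictly ``better'' than the source with respect to the appropriate termination measure --- this is what prevents the three stages from looping back on each other. The diagrammatic factorizations that underlie the three identities are standard for $A_d$ and lift to $\gr\Pd$ via precisely the relations used in Lemma~\ref{NazarovLemma44}, so the entire argument reduces to the commutativity of $\{w_i\}$ together with \eqref{FirEq}--\eqref{4thEq}.
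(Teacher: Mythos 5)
Your proof is correct and follows essentially the same route as the paper's: factor $\gamma$ into disjoint $\overline{(k,\ell)}$'s and a permutation, then use \eqref{FirEq}, \eqref{3rdEq} and \eqref{4thEq} to slide exponents off upper right ends onto their left mates and to push the right-hand exponents either onto lower right ends or through the permutation onto the left factor. Your three-stage algorithm with explicit termination measures merely spells out the steps the paper compresses into ``repeat this process'' and ``the argument for \eqref{2ndRegularity} is similar.''
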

\begin{proof}
	Similar to the proof of Lemma \ref{NazarovLemma44}, we may assume that 
$\gamma$ has exactly $2r$ horizontal edges and 
$$\gamma = \ov{(k_1,\ell_1)} \cdots \ov{(k_r,\ell_r)} \cdot \sigma$$ 
for some $\sigma\in \mathfrak{S}_d$ where $k_1,\ell_1,\ldots,k_r,\ell_r$ are pairwise distinct. 
	Suppose that $w = w_1^{p_1}\cdots w_{d}^{p_d}$ and $w' = w_1^{q_1}\cdots w_{d}^{q_d}$. By \eqref{FirEq}, we have the following equality in $\gr\Pd$
\begin{equation}\label{L5.3}
w\gamma w' =  w_1^{p_1}\cdots w_{d}^{p_d} \cdot \ov{(k_1,\ell_1)} \cdots \ov{(k_r,\ell_r)}  \cdot w_{\sigma(1)}^{q_1}\cdots w_{\sigma(d)}^{q_d}\cdot \sigma.
\end{equation}
Suppose that $p_m\neq 0$ for some $1\leq m\leq d$ which is an upper right end of some horizontal edge of the graph $\ov{(k_1,\ell_1)} \cdots \ov{(k_r,\ell_r)}$. We may assume $\ov{(k_1,\ell_1)}=\ov{(n,m)}$ where $n$ is the upper left end connecting $m$.

By \eqref{4thEq}, we have
$$ \prod_{i=1}^d w_i^{p_i}  \cdot \prod_{j=1}^r \ov{(k_j,\ell_j)} 
=
 (\prod_{i\neq m}^d w_i^{p_i} ) \cdot w_m^{p_m} \cdot \ov{(m,n)} \cdot \prod_{j\neq1}^r \ov{(k_j,\ell_j)}=
 (\prod_{i\neq m}^d w_i^{p_i})  \cdot w_n^{p_m} \cdot \ov{(m,n)} \cdot \prod_{j\neq1}^r \ov{(k_j,\ell_j)} 
$$
Repeat this process, we show that one can always rewrite \eqref{L5.3} such that the exponents $p_1,\ldots,p_d$ satisfy the condition \eqref{1stRegularity}. The argument for \eqref{2ndRegularity} is similar.

\end{proof}

The following result follows immediately from Lemma \ref{NazarovLemma44} and Lemma \ref{NazarovLemma45}.
\begin{cor}\label{NazarovLemma46}
 $\Pd$ is spanned by the set of all regular monomials.
\end{cor}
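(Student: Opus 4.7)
The plan is to work in the associated graded algebra $\gr\Pd$, iteratively reduce an arbitrary element to a single-diagram normal form using Lemma~\ref{NazarovLemma44}, bring it to regular form using Lemma~\ref{NazarovLemma45}, and finally lift the conclusion to $\Pd$ itself by induction on the filtration degree.

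First I would argue the following: since $\Pd$ is generated by $s_a$, $\vep_a$, and $y_j$, every element is a linear combination of words in these generators, and each such word can be written as
$$g_0\, y^{\alpha_1}\, g_1\, y^{\alpha_2}\, \cdots\, g_{k-1}\, y^{\alpha_k}\, g_k,$$
where each $g_i$ is a (possibly empty) word in $s_a,\vep_a$ and each $y^{\alpha_i}$ is a monomial in $y_1,\ldots,y_d$. Since the relations of $\Pd$ that do not involve any $y_j$ are exactly the defining relations of the periplectic Brauer algebra $A_d$, the image in $\gr\Pd$ of each $g_i$ may be replaced by a linear combination of Brauer diagrams $\gamma_i\in\mc G(d)$. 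Consequently, modulo strictly lower filtration degree, every element of $\Pd$ is a linear combination of expressions of the form $\gamma_0\, w^{\alpha_1}\, \gamma_1\, \cdots\, \gamma_{k-1}\, w^{\alpha_k}\, \gamma_k$ in $\gr\Pd$.

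Next I would apply Lemma~\ref{NazarovLemma44} to the leftmost pair of diagrams to collapse $\gamma_{i-1}\, w^{\alpha_i}\, \gamma_i$ into $\varepsilon\, w'\cdot(\gamma_{i-1}\gamma_i)\cdot w''$ with $\varepsilon\in\{0,1\}$, strictly decreasing the number of diagram factors. Iterating reduces the product to a single expression of the form $w\,\gamma\,w'$ with $\gamma\in\mc G(d)$, up to the scalar $\varepsilon$. Lemma~\ref{NazarovLemma45} then normalizes $w\,\gamma\,w'$ into a regular monomial satisfying \eqref{1stRegularity} and \eqref{2ndRegularity}. Hence $\gr\Pd$ is linearly spanned by the images of regular monomials.

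A standard filtered-to-graded argument then lifts this to $\Pd$: given $x\in\Pd$ of filtration degree at most $N$, the preceding paragraph produces a linear combination $R$ of regular monomials of degree at most $N$ such that $x-R$ has filtration degree at most $N-1$, and induction on $N$ finishes the proof; the base case $N=0$ is the fact that $A_d$ is spanned by Brauer diagrams, which are themselves regular monomials with all exponents zero. I do not anticipate any serious obstacle: the substantive combinatorics has already been carried out in the two preceding lemmas, and the remaining step is a routine filtration argument.
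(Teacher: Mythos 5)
Your argument is correct and is exactly the intended deduction: the paper states this corollary as an immediate consequence of Lemma~\ref{NazarovLemma44} and Lemma~\ref{NazarovLemma45}, and your write-up simply makes explicit the standard steps (alternating decomposition of a word, collapsing adjacent diagrams via Lemma~\ref{NazarovLemma44}, normalizing via Lemma~\ref{NazarovLemma45}, and lifting from $\gr\Pd$ to $\Pd$ by induction on filtration degree). No gaps; same approach as the paper.
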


To prove Theorem \ref{thm::PBW}, it remains to prove the linear independence of regular monomials. By induction on degree, it suffices to prove the linear independence of those terms having maximal degree $\sum_{q=1}^{d}i_q+j_q$ ($i_q$, $j_q$ given as in \eqref{def::RegularMonomials}) in a linear combination of regular monomials.
This can be obtained by a similar method as employed in the proof of \cite[Lemma 4.8]{Na} by dots tracing from the upper row and the lower row. 
\begin{proof}[Proof of Theorem \ref{thm::PBW}] 
Let $\{F_1,\ldots , F_s\}$ be a set of regular monomials in $\Pd$.  Let $F=\sum_{k=1}^{s} \alpha_kF_k$, where $\alpha_k\in \C\backslash\{0\}$. Our goal is to show that $F\neq 0$.
For each $1\leq k \leq s$, we choose a regular monomial expression of $F_k$, say \begin{align}\label{Eq::ChoiceOfRegularEx} &F_k = y_1^{i^{(k)}_1}y_2^{i^{(k)}_2}\cdots y_d^{i^{(k)}_d} \cdot \gamma^{(k)} \cdot y_1^{j^{(k)}_1}y_2^{j^{(k)}_2}\cdots y_d^{j^{(k)}_d}. \end{align}  Then we define the degree of $F_k$ to be the number $d(F_k): = \sum_{q=1}^{d}i^{(k)}_q+j^{(k)}_q$, which turns out to be proved to be independent of the choice of expression. Let $m$ be the maximal degree among $d(F_1), \ldots, d(F_s)$. For each $1\leq k \leq s$ we denote the number of horizontal edges of $\gamma^{(k)}$ by $2r^{(k)}$ and set $2r$ to be the minimal number among $2r^{(1)}, \ldots ,2r^{(s)}$. 

We proceed our argument by considering the tensor product representation $\pi_m$ in Theorem~\ref{shiftpi}. Consider the set $\mc G \subseteq \mc G(m+d)$ consisting of $(m+d,m+d)$-Brauer diagram $\Gamma$ satisfying the following three conditions:
\begin{itemize}
	\item[(i)] $\Gamma$ has exactly $2r$ horizontal edges.
	\item[(ii)]  There are no vertical edges in $\Gamma$ of the form $\{k, \ov k\}$ with $1\leq k \leq m$.
	\item[(iii)] There are no horizontal edges in $\Gamma$ of the form $\{k,\ell\}$ or $\{\ov k, \ov \ell\}$, with $1\leq k,\ell \leq m$.
\end{itemize} Let $\C\mc G$ be the subspace of $A_{m+d}$ spanned by $\mc G$.  Also, 
we define the projection $p_m$ of $A_{m+d}$ on $\C\mc G$. We may note that $p_m(\pi_m(F_k)) = 0$, if either  $d(F_k) < m$ or $2r^{(k)}\neq 2r$. To see this, if $2r^{(k)} \neq 2r$, then $p_m(\pi_m(F_k)) = 0$ by (i). If $d(F_k)<m$, then by \eqref{Eq::JM-element}, $\pi(F_k)$ must contain a vertical edge connecting $k$ and $\ov{k}$ for some $1\leq k \leq m$, and hence $p_m(\pi_m(F_k))= 0$. As a consequence, from now on we may assume that  $d(F_k) = m$ and $2r^{(k)} = 2r$, for each $1\leq k \leq s$. We shall show that $p_m(\pi_m(F_1)), \ldots, p_m(\pi_m(F_s))$ are linearly independent.


For each $1\leq k\leq s$, $p_m(\pi_m(F_k))$ is a linear combination of $(m+d,m+d)$-Brauer diagrams of the form: 
\begin{align} \label{Thm::Eq::BasicExpreesion} &\Pi_{\ell=1}^d c({N_{\ell,1}, m+\ell})\cdots c({N_{\ell,i_\ell}, m+\ell})\cdot \pi_m(\gamma^{(k)}) \cdot \Pi_{\ell=1}^d c({N_{\ell,1}', m+\ell}) \cdots c({N_{\ell,j_\ell}', m+\ell}), \end{align} where $c({a,b}) \in \{(a,b), \ov{(a,b)}\}$ for given $1\leq a\leq m,~ ,m+1 \leq b \leq m+d$, and there is a bijection between the following two sets  \begin{align}\label{Eq::BijectionCondition}&\bigcup_{\ell=1}^d\{N_{\ell,1}, N_{\ell,2},\ldots, N_{\ell,i_\ell},  ~N_{\ell,1}', N_{\ell,2}',\ldots, N_{\ell,j_\ell}'\}_{\ell = 1}^d \longleftrightarrow \{1,2,\ldots ,m\}.\end{align}

{\bf Claim 1}: In the expression \eqref{Thm::Eq::BasicExpreesion}, the element $p_m(\pi_m(F_k))$ is a linear combination of $(m+d,m+d)$-Brauer diagrams $$L_1\cdots L_d\cdot \pi_m(\gamma^{(k)})\cdot R_1 \cdots R_d,$$
where there are   $$\bigcup_{\ell=1}^d\{N_{\ell,1}, N_{\ell,2},\ldots, N_{\ell,i_\ell}, ~N_{\ell,1}', N_{\ell,2}',\ldots, N_{\ell,j_\ell}'\} =\{1,2,\ldots, m\},$$ satisfying condition \eqref{Eq::BijectionCondition} such that for each $1\leq \ell \leq d$ the products $L_\ell$ and $R_\ell$ are of the following forms 
\begin{align}&L_\ell = (N_{\ell,1}, m+\ell)\cdots(N_{\ell,i_\ell}, m+\ell), \text{or}~\ov{(N_{\ell,1}, m+\ell)}\cdots\ov{(N_{\ell,i_\ell}, m+\ell)}, \\   &R_{\ell}=(N_{\ell,1}', m+\ell)\cdots(N_{\ell,j_\ell}', m+\ell),\text{or}~ \ov{(N_{\ell,1}', m+\ell)}\cdots\ov{(N_{\ell,j_\ell}', m+\ell)}. 
\end{align} To prove this, suppose on the contrary that the coefficient of the element \begin{align} \label{Thm::Eq::G1}  &\cdots (a,b)\ov{(c,b)} \cdots \pi_m(\gamma^{(k)}) \cdots,\end{align} in the expression \eqref{Thm::Eq::BasicExpreesion} is non-zero. However, the Brauer diagram \eqref{Thm::Eq::G1} contains the horizontal edge $\{a,c\}$ with $1\leq a,c\leq m$, which is a contradiction. 
With exactly the same method, one can deduce that
the coefficients of the elements \begin{align*}   &\cdots \ov{(a,b)}(c,b) \cdots \pi_m(\gamma^{(k)}) \cdots, \\
   & \cdots\pi_m(\gamma^{(k)}) \cdots \ov{(a,b)}(c,b) \cdots, \\
  & \cdots\pi_m(\gamma^{(k)}) \cdots (a,b)\ov{(c,b)} \cdots,\end{align*} in the expression \eqref{Thm::Eq::BasicExpreesion} must be zero.

 Recall a leading term defined in \cite[Section 4]{Na} is a $(m+d,m+d)$-Brauer diagrams $T$ obtained in \eqref{Thm::Eq::BasicExpreesion} of the following form: \begin{align} \label{Thm::Eq::LeadingTerm}
 &T=\Pi_{\ell=1}^d(N_{\ell,1}, m+\ell)\cdots(N_{\ell,i_\ell}, m+\ell)\cdot \pi_m(\gamma^{(k)}) \cdot \Pi_{\ell=1}^d(N_{\ell,1}', m+\ell)\cdots(N_{\ell,j_\ell}', m+\ell),\end{align} where $$\bigcup_{\ell = 1}^d \{N_{\ell,1}, N_{\ell,2},\ldots, N_{\ell,i_\ell},~N_{\ell,1}', N_{\ell,2}',\ldots, N_{\ell,j_\ell}'\}=\{1,2,\ldots ,m\},$$ satisfying condition \eqref{Eq::BijectionCondition}. As used in the paragraph above, for each $1\leq \ell \leq d$, we set \begin{align} \label{Thm::Eq::LeadingTerm::LR} &L_\ell = (N_{\ell,1}, m+\ell)\cdots(N_{\ell,i_\ell}, m+\ell),~  R_{\ell}=(N_{\ell,1}', m+\ell)\cdots(N_{\ell,j_\ell}', m+\ell). 
 \end{align} A $(m+d,m+d)$-Brauer diagram which is obtained in \eqref{Thm::Eq::BasicExpreesion} and not a leading term is called non-leading term. \\
 
 {\bf Claim 2}: A leading term and a non-leading term can not be proportional.
 
 We suppose on the contrary that there are some $1\leq k'\leq s$, $1\leq q\leq d$ and a non-leading term \begin{align}&T'=L'_1\cdots L'_d  \cdot \pi_m(\gamma^{(k')}) \cdot R'_1\cdots R'_\ell.
  \end{align} with $$L'_q  = \ov{(M_{q,1}, m+q)}\cdots\ov{(M_{q,i'_q}, m+q)}\neq 1~(\text{i.e.}~ i'_q\geq 1)$$  such that
  $T'$ is proportional to $T$. 
  
  We first note that the mate of $m+q$ in the diagram $T'$ is $M_{q,1}$, namely, $T'$ has the horizontal edge $\{M_{q,1}, m+q\}$. If $L_{q} \neq 1$ then $T$ has the line $\{\ov{N_{q,1}},m+q\}$, this is a contradiction. Therefore we have $L_q = 1$. Now we trace the mate of $m+q$ in the Brauer diagram $T$. Since $\gamma^{(k)}$ is regular and $L_q =1$, we may conclude that the mate of $m+q$ in the Brauer diagram $T$ is identical to that in the Brauer diagram $$\pi_m(\gamma^{(k)}) \cdot \Pi_{\ell=1}^d(N_{\ell,1}', m+\ell)\cdots(N_{\ell,j_\ell}', m+\ell),$$ and so it must lie in $$\{m+1,\ldots m+d\}\cup \{\ov{m+1},\ldots \ov{m+d}\}.$$ This is a contradiction to $N_{q,1}\leq m$.
 
  Next we suppose on the contrary that there is $$R'_q  = \ov{(M'_{q,1}, m+q)}\cdots\ov{(M'_{q,j'_q}, m+q)}\neq 1 ~(\text{i.e.}~ j'_q \geq 1)$$  such that
 $T'$ is propositional to $T$. 
 
 Similarly, we note that $T'$ has the horizontal edge $\{\ov{M'_{q,j'_q}}, \ov{m+q}\}$. If $R_{q} \neq 1$ then $T$ has the line $\{{N_{q,j_q}},\ov{m+q}\}$, this is a contradiction. Since $\gamma^{(k)}$ is regular and $R_q =1$, we may conclude that the mate of $\ov{m+q}$ in the Brauer diagram $T$ is in $\{\ov{m+1},\ldots \ov{m+d}\}$. This is a contradiction to $M'_{q,j'_q}\leq m$. This completes the proof of Claim 2.\\
 
 {\bf Claim 3}: The Leading term $T$ in \eqref{Thm::Eq::LeadingTerm} is uniquely determined by the parameters $$(\gamma^k; i_1,\ldots ,i_d; j_1,\ldots, j_d; N_{1,1}, \ldots, N_{1,i_1},\ldots ;N'_{1,1},\ldots, N'_{1,j_1},\ldots). $$
 

For each $1\leq \ell \leq d$, we compute the parameters $i_\ell, j_\ell, N_{\ell, 1},\ldots , N_{\ell,i_\ell}, N'_{\ell, 1},\ldots , N'_{\ell,j_\ell}$ via the following two algorithms of tracing mate for a given number in $$\{1,\ldots, m+d,\ov 1, \ldots, \ov{m+d}\},$$ in the Brauer diagram $T$. \\ 

The following is the algorithm for computing parameters $i_\ell, N_{\ell, 1},\ldots , N_{\ell,i_\ell}$:\\

{\bf Step 0}: Set $N_{\ell,0}:=m+\ell$ and go to Step 1 with parameter $i=0$. 

{\bf Step 1 with parameter $i$}: \\
If the mate of $N_{\ell,i}$ in $T$ does not lie in $\{\ov 1, \ov 2,\ldots, \ov m\}$, then it must lie in $$\{m+1,\ldots, m+d,\ov{m+1},\ldots,\ov{m+d}\},$$ since $\gamma^{(k)}$ is regular. In this case we set $i_\ell$ to be $i$ and quit this algorithm. If the mate of $N_{\ell,i}$ in $T$ lie in $\{\ov 1, \ov 2,\ldots, \ov m\}$ then we define $N_{\ell,i+1}$ such that $\ov{N_{\ell,i+1}}$ is the mate of $N_{\ell,i}$  in $T$ and re-run Step 1 with parameter $i+1$.  \\

The following is the algorithm for computing parameters $j_\ell, N'_{\ell, 1},\ldots , N'_{\ell,j_\ell}$:\\

{\bf Step 0}{$^{\prime}$}: Set $N'_{\ell,0}:=m+\ell$ and go to Step $1^{\prime}$ with parameter $j=0$. 

{\bf Step 1}{$^{\prime}$ {\bf with parameter} $j$}: \\
If the mate of $\ov{N'_{\ell,j}}$ in $T$ does not lie in $\{1,2,\ldots, m\}$, then it must lie in $\{\ov{m+1},\ldots,\ov{m+d}\}$ since $\gamma^{(k)}$ is regular. In this case we set $j_\ell$ to be $j$ and quit this algorithm. If the mate of $\ov{N'_{\ell,j}}$ in $T$ lie in $\{1,2,\ldots, m\}$ then we define $N'_{\ell,j+1}$ to be the mate of $\ov{N'_{\ell,j}}$ in $T$ and re-run Step 1' with parameter $j+1$.  \\

These two algorithms determine desired parameters. As a consequence,  $\gamma^{(k)}$ is also obtained since elements $L_1,\ldots, L_d$ and $R_1,\ldots, R_d$ defined in \eqref{Thm::Eq::LeadingTerm::LR} are invertible.  This completes the proof of Claim 3.

Claim 1, Claim 2 and Claim 3 implies that $p_m (\pi_m(F_1)), p_m (\pi_m(F_2)), \ldots, p_m (\pi_m(F_s))$ are linearly independent. This completes the proof.
 
\end{proof}

Along with the proof of Theorem \ref{thm::PBW}, we obtain a $\mf p(n)$ counterpart of \cite[Theorem 4.7]{Na}.
 \begin{thm}
  We have
  $$\bigcap_{m\geq 0} \emph{Ker}(\pi_m) =0.$$
 \end{thm}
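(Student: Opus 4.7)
The plan is to observe that the linear independence argument inside the proof of Theorem \ref{thm::PBW} in fact produces, for every nonzero $F \in \Pd$, an explicit index $m$ at which $\pi_m(F) \neq 0$. So the statement is essentially a repackaging of that proof.

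Concretely, suppose for contradiction that $F \neq 0$ lies in $\bigcap_{m \geq 0}\mathrm{Ker}(\pi_m)$. First I would apply Theorem \ref{thm::PBW} to expand $F = \sum_{k=1}^s \alpha_k F_k$, where $F_k$ are pairwise distinct regular monomials and $\alpha_k \in \C \setminus \{0\}$. For each $k$, choose a regular monomial expression as in \eqref{Eq::ChoiceOfRegularEx} and associate the degree $d(F_k) = \sum_q (i_q^{(k)} + j_q^{(k)})$. Let $m := \max_k d(F_k)$, and let $2r := \min\{2r^{(k)} : d(F_k) = m\}$ be the minimum number of horizontal edges of $\gamma^{(k)}$ among the maximal-degree terms.

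Next I would apply $\pi_m$ followed by the projection $p_m$ onto the subspace $\C\mc G \subseteq A_{m+d}$ spanned by those $(m+d,m+d)$-Brauer diagrams with exactly $2r$ horizontal edges, no vertical edge $\{k, \ov k\}$ for $1 \leq k \leq m$, and no horizontal edge inside $\{1,\ldots,m\}$ or $\{\ov 1, \ldots, \ov m\}$. By the discussion in the proof of Theorem \ref{thm::PBW} (the paragraph justifying the restriction to $d(F_k)=m$ and $2r^{(k)}=2r$), we have $p_m(\pi_m(F_k)) = 0$ whenever $d(F_k) < m$ or $2r^{(k)} \neq 2r$. Hence
\[
p_m(\pi_m(F)) = \sum_{\substack{k:\, d(F_k) = m\\ 2r^{(k)} = 2r}} \alpha_k\, p_m(\pi_m(F_k)).
\]
Claims 1, 2 and 3 of the proof of Theorem \ref{thm::PBW} then show that the leading Brauer-diagram terms in each $p_m(\pi_m(F_k))$ appearing in this sum are uniquely recoverable from the combinatorial data of $F_k$ (via the mate-tracing algorithms), so the vectors $\{p_m(\pi_m(F_k))\}$ in the remaining index set are linearly independent in $\C\mc G$. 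Since at least one $\alpha_k$ in this sum is nonzero by the choice of $m$, we conclude $p_m(\pi_m(F)) \neq 0$, whence $\pi_m(F) \neq 0$. This contradicts $F \in \mathrm{Ker}(\pi_m)$ and completes the argument.

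The potential obstacle here is purely organizational rather than mathematical: one must be careful to check that every $F_k$ of smaller degree or with too many horizontal edges is killed by $p_m$ (so it cannot cancel the maximum-degree contributions), but this is exactly the content of the degree and $2r$-minimization steps already carried out in the proof of Theorem \ref{thm::PBW}. Hence no new estimate or identity in $\gr \Pd$ is needed; the theorem follows by quoting that proof almost verbatim.
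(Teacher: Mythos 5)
Your proposal is correct and is essentially the paper's own argument: the paper derives this theorem directly from the proof of Theorem \ref{thm::PBW}, which already shows that any nonzero linear combination of regular monomials survives $p_m\circ\pi_m$ for $m$ the maximal degree, and you have simply spelled that out. (Your choice of $2r$ as the minimum over only the maximal-degree terms is, if anything, a slightly more careful reading that guarantees at least one surviving summand.)
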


We conclude this section by a similar result in \cite[Corollary~4.9]{Na}.
Recall that the {\em degenerate affine Hecke algebra} $H_d$ (see e.g. \cite{Dr}) is generated by
the group algebra of the symmetric group $\C[\mf S_d]$ and the polynomial generators $v_1, v_2,\ldots,v_d$ subject
to the relations
\begin{enumerate}
\item $s_iv_j=v_js_i$    for $j \not\in \{i,i+1\}$ ,
\item $s_iv_i-v_{i+1}s_i=-1$, \qquad $s_iv_{i+1}-v_is_{i}=1$.
\end{enumerate}
 The following corollary can be observed from the defining relations of $\Pd$.
\begin{cor}
The map $s_a\mapsto s_a$, $\vep_a\mapsto 0$, $y_j\mapsto v_j$
defines a homomorphism from $\Pd$ to $H_d$.
\end{cor}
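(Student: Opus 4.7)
The proof plan is a direct relation-by-relation check: since $\Pd$ is defined by generators and relations $(P.1)$--$(P.8)$, to exhibit an algebra homomorphism it suffices to verify that the images under the prescribed map satisfy each of those relations in $H_d$.

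I would begin by sorting the relations into three groups. \emph{Group I} consists of those relations that involve no occurrence of $\vep$: namely $(P.1)$, $(P.2)$(a),(b),(c), $(P.5)$(c), and the ``surviving'' parts of $(P.7)$. Under $s_a\mapsto s_a$, $y_j\mapsto v_j$, these become, respectively, $s_a^2=1$, the braid/commutation relations among the $s_a$'s, the relation $s_av_j=v_js_a$ for $j\notin\{a,a+1\}$, the polynomial commutation $v_iv_j=v_jv_i$, and (from $(P.7)$) $s_av_a-v_{a+1}s_a=-1$ and $v_as_a-s_av_{a+1}=-1$. Each of these is either a defining relation of $H_d$ or an immediate consequence of them (the $v_i$'s commute by the standard definition of $H_d$, and $s_a^2=1$ and the braid relations hold because $\C[\mathfrak S_d]\subset H_d$).

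\emph{Group II} consists of relations that become trivial because $\vep_a\mapsto 0$: namely $(P.3)$, $(P.4)$, $(P.5)$(a),(b), all of $(P.6)$, and $(P.8)$. In each of these, both sides contain at least one factor of some $\vep_b$, so both sides of the image relation are $0$. One should note explicitly that in $(P.6)$(a)--(d) and in $(P.8)$(a),(b) the right hand sides are also scalar multiples of some $\vep_c$, so nothing survives after applying the map.

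\emph{Group III} is the one place where $\vep_a$ appears but must still cancel: the relation $(P.7)$, which reads $s_ay_a-y_{a+1}s_a=\vep_a-1$ and $y_as_a-s_ay_{a+1}=-\vep_a-1$. Under the map the right hand sides become $-1$ and $-1$ respectively, giving precisely the two defining relations of $H_d$ listed in (2) of the corollary. I would highlight this as the only relation where the vanishing of $\vep_a$ and a defining relation of $H_d$ have to match numerically, and indeed the signs work out on the nose.

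There is no real obstacle: once the relations are grouped as above, the verification is mechanical. The only mild subtlety worth flagging is that the polynomial commutation $v_iv_j=v_jv_i$ is needed (coming from $(P.5)$(c)), and this is indeed part of the standard definition of $H_d$ (or an immediate consequence of the relations listed). After these checks, the universal property of a presentation by generators and relations yields the desired algebra homomorphism $\Pd\to H_d$.
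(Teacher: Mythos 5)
Your verification is correct and is exactly the relation-by-relation check that the paper leaves implicit (the paper simply asserts the corollary ``can be observed from the defining relations of $\Pd$''). The grouping into relations killed by $\vep_a\mapsto 0$, relations inherited from $\C[\mf S_d]$ and the polynomial commutativity, and the sign match in $(P.7)$ with the two defining relations of $H_d$ is the whole content of the argument.
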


\section{A diagrammatic description for $\Pd$}\label{Section::diagram}


In this section, we extend the results in \cite{Mo, KT} so that a diagrammatic realization of $\Pd$ is obtained. 
Nevertheless, many results in Section \ref{Section::PBWbasis} can be translated into the language of diagrams, which provides an intuitive way to understand the meaning of the definitions and the arguments in the proofs there.

Note that the periplectic Brauer algebra $A_d$, which has a diagrammatic realization, is naturally contained in $\Pd$. We would like to extend the realization of $A_d$ to $\Pd$. It suffices to decide the diagrams of $y_j$ for $1\leq j\leq d$, and then to describe how the multiplication is performed involving the new defined diagrams.

Similar to the case in \cite{ES}, we define the graph of $y_j$ to be the graph obtained by adding a dot to the $j$-th vertical line of the unity in $A_d$:

$$
\begin{tikzpicture}[scale=0.7,thick,>=angle 90]
\begin{scope}[xshift=4cm]
\draw (2,0.5) node[] {${y}_j := $};
\draw  (3,0) -- +(0,1);
\draw (5,0) -- +(0,1);
\draw [dotted] (3.55,.5) -- +(0.4,0);
\draw  (4.4,0) -- +(0,1);
\draw  (5.6,0) -- +(0,1);
\draw [dotted] (6.1,.5) -- +(0.4,0);
\draw  (7,0) -- +(0,1);
\put(177,25){\tiny{$j$}}
\put(177,-10){\tiny{$\overline{j}$}}
\put(176.5, 13){$\bullet$}
\end{scope}
\end{tikzpicture}\\[3mm]
$$
Moreover, we allow a dot to freely move up or move down along a vertical line hence we will force them to be attached either on the top end or on the bottom end. For example, the following graphs are considered to be the same one but we use the first one or the last one to represent it:

$$\begin{tikzpicture}[scale=0.7,thick,>=angle 90]
\begin{scope}[xshift=4cm]
\draw (-4.2,0.5) node[] {${y}_1 = $};
\draw  (-3,0) -- +(0,1);
\draw  (-2,0) -- +(0,1);
\draw  (-1,0) -- +(0,1);
\put(17,13){$\bullet$};
\put (67.5,6){$\equiv$};
\draw  (0,0) -- +(0,1);
\draw  (1,0) -- +(0,1);
\draw  (2,0) -- +(0,1);
\put(77,7){$\bullet$};
\put (130,6){$\equiv$};
\draw  (3.2,0) -- +(0,1);
\draw  (4.2,0) -- +(0,1);
\draw  (5.2,0) -- +(0,1);
\put(141,0){$\bullet$};
\end{scope} \end{tikzpicture}$$

\begin{df}
Let $d\in\mathbb{N}$. A dot Brauer $d$-diagram is a Brauer $(d,d)$-diagram $\gamma$ with finitely many dots attached to the lines of $\gamma$ such that each dot is attached to the end of a straight line (could be top or bottom) or the end of a horizontal edge (could be left or right). 
\end{df}

\begin{df} \label{Der::RegularDotBrauerDiagram}
A dot Brauer $d$-diagram is called {\em regular} if its dots satisfy the following conditions:
\begin{enumerate}
\item If a dot appears in bottom, then it must be attached to the right end of a cap.
\item  A dot can not attach to the right end of a cup. 
\end{enumerate}
We denote the $\mathbb{C}$-span of all regular dot Brauer $d$-diagrams by $\hat{\mc G}(d)$
\end{df}

\begin{rem}
The restrictions for regular graphs are exactly the conditions for regular monomials given in (\ref{1stRegularity}) and (\ref{2ndRegularity}).

\end{rem}

For example, the first graph is a regular dot Brauer $5$-diagram, the second one is a dot Brauer $5$-diagrams but not a regular one, and the third one is not a dot Brauer $5$-diagram:
$$ \begin{tikzpicture}[scale=0.7,thick,>=angle 90]
\begin{scope}[xshift=4cm]
\draw  (-10.0,0) -- +(0,1);
\put(-122.5,14){$\bullet$}

\draw (-9.5,0) to [out=65,in=-115] +(0.6,1);
\put(-102,14){$\bullet$}

\draw (-8.9,0) to [out=115,in=-65] +(-0.6,1);

\draw (-8.2,0) to [out=90, in=180] +(0.3,0.3);
\draw (-7.6,0) to [out=90, in=0] +(-0.3,0.3);
\put(-73.8,0){$\bullet$}

\draw (-8.2,1) to [out=-90, in=180] +(0.3,-0.3);
\draw (-7.6,1) to [out=-90, in=0] +(-0.3,-0.3);

\put(-60,0){$,$}

\draw  (-4.5,0) -- +(0,1);
\draw (-4.1,0) to [out=65,in=-115] +(0.6,1);
\draw (-3.5,0) to [out=115,in=-65] +(-0.6,1);
\put(-4.2,0){$\bullet$}

\draw (-3,0) to [out=90, in=180] +(0.3,0.3);
\draw (-2.4,0) to [out=90, in=0] +(-0.3,0.3);

\draw (-3,1) to [out=-90, in=180] +(0.3,-0.3);
\draw (-2.4,1) to [out=-90, in=0] +(-0.3,-0.3);

\put(50,0){$,$}


\draw  (1.2,0) -- +(0,1);
\draw (1.7,0) to [out=65,in=-115] +(0.6,1);
\draw (2.3,0) to [out=115,in=-65] +(-0.6,1);

\draw (2.8,0) to [out=90, in=180] +(0.3,0.3);
\draw (3.4,0) to [out=90, in=0] +(-0.3,0.3);

\draw (2.8,1) to [out=-90, in=180] +(0.3,-0.3);
\draw (3.4,1) to [out=-90, in=0] +(-0.3,-0.3);

\put(139,2.2){$\bullet$}

\end{scope}
\end{tikzpicture}$$

Recall that the multiplication in $\mc G(d)\cong A_d$ is performed by concatenation of graphs as in \cite{KT}, and this is exactly the multiplication that we will use. Therefore it is natural to set  \\
$$
\begin{tikzpicture}[scale=0.7,thick,>=angle 90]
\begin{scope}[xshift=4cm]
\draw (2,0.5) node[] {${y}_j^2 := $};
\draw  (3,0) -- +(0,1);
\draw (5,0) -- +(0,1);
\draw [dotted] (3.55,.5) -- +(0.4,0);
\draw  (4.4,0) -- +(0,1);
\draw  (5.6,0) -- +(0,1);
\draw [dotted] (6.1,.5) -- +(0.4,0);
\draw  (7,0) -- +(0,1);
\put(177,25){\tiny{$j$}}
\put(177,-10){\tiny{$\overline{j}$}}
\put(176.5, 13){$\bullet$}
\put(176.5,6){$\bullet$}
\end{scope}
\end{tikzpicture}\\[3mm]
$$
and similarly for $y_j^k$ for any $k\in\mathbb{N}$. It is also clear that $y_iy_j=y_jy_i$ for $i\neq j$ since we allow the dots to freely move in a vertical line.

However, the concatenation of two dot Brauer $d$-diagrams may produce some graph which is not a dot Brauer $d$-diagram. Our next goal is to explain how to move a dot appearing in the middle of a graph until it is moved to the top or the bottom of a diagram, and this is performed by repeat using of the defining relations of $\Pd$.
Note that all of the defining relations of $\Pd$ in Definition \ref{def:VW} can be translated into the language of dot Brauer $d$-diagrams, where those relations without any $y_i$ have been already recorded in \cite{KT}. We list a few relations involving $y_i$ here as illustrating examples: \\ 

$(P.8)$ \begin{tikzpicture}[scale=0.7,thick,>=angle 90]
\begin{scope}[xshift=4cm]
\draw (0.8,0) to [out=90, in=180] +(0.3,0.3);
\draw (1.4,0) to [out=90, in=0] +(-0.3,0.3);
\draw (0.8,1) to [out=-90, in=180] +(0.3,-0.3);
\draw (1.4,1) to [out=-90, in=0] +(-0.3,-0.3);
\put(104,0){$\bullet$}
\put(123,0){$\bullet$}
\put(112,7){$=$}
\draw (2.3,0) to [out=90, in=180] +(0.3,0.3);
\draw (2.9,0) to [out=90, in=0] +(-0.3,0.3);
\draw (2.3,1) to [out=-90, in=180] +(0.3,-0.3);
\draw (2.9,1) to [out=-90, in=0] +(-0.3,-0.3);
\put(200,1){$,$}
\put(243.5,13.5){$\bullet$}
\put(263,13.5){$\bullet$}
\put(143,7){$+$}
\draw (3.8,0) to [out=90, in=180] +(0.3,0.3);
\draw (4.4,0) to [out=90, in=0] +(-0.3,0.3);
\draw (3.8,1) to [out=-90, in=180] +(0.3,-0.3);
\draw (4.4,1) to [out=-90, in=0] +(-0.3,-0.3);
\put(230,-35){$,$}

\draw (7.8,0) to [out=90, in=180] +(0.3,0.3);
\draw (8.4,0) to [out=90, in=0] +(-0.3,0.3);
\draw (7.8,1) to [out=-90, in=180] +(0.3,-0.3);
\draw (8.4,1) to [out=-90, in=0] +(-0.3,-0.3);
\put(252,7){$=$}
\draw (9.3,0) to [out=90, in=180] +(0.3,0.3);
\draw (9.9,0) to [out=90, in=0] +(-0.3,0.3);
\draw (9.3,1) to [out=-90, in=180] +(0.3,-0.3);
\draw (9.9,1) to [out=-90, in=0] +(-0.3,-0.3);
\put(283,7){$-$}
\draw (10.8,0) to [out=90, in=180] +(0.3,0.3);
\draw (11.4,0) to [out=90, in=0] +(-0.3,0.3);
\draw (10.8,1) to [out=-90, in=180] +(0.3,-0.3);
\draw (11.4,1) to [out=-90, in=0] +(-0.3,-0.3);
\end{scope} \end{tikzpicture}\\

$(P.7)$ \begin{tikzpicture}[scale=0.7,thick,>=angle 90]
\begin{scope}[xshift=4cm]

\draw (0.8,0) to [out=65,in=-115] +(0.6,1);
\draw (1.4,0) to [out=115,in=-65] +(-0.6,1);
\put(103,13.5){$\bullet$}
\put(111,7){$=$}
\draw (2.3,0) to [out=65,in=-115] +(0.6,1);
\draw (2.9,0) to [out=115,in=-65] +(-0.6,1);
\put(124,0){$\bullet$}
\put(143,7){$+$}
\draw (3.8,0) to [out=90, in=180] +(0.3,0.3);
\draw (4.4,0) to [out=90, in=0] +(-0.3,0.3);
\draw (3.8,1) to [out=-90, in=180] +(0.3,-0.3);
\draw (4.4,1) to [out=-90, in=0] +(-0.3,-0.3);
\put(173,7){$+$}
\draw (5.4,0) -- +(0,1);
\draw (5.9,0) -- +(0,1);


\draw (8.8,0) to [out=65,in=-115] +(0.6,1);
\draw (9.4,0) to [out=115,in=-65] +(-0.6,1);
\put(284,13.5){$\bullet$}
\put(272,7){$=$}
\draw (10.3,0) to [out=65,in=-115] +(0.6,1);
\draw (10.9,0) to [out=115,in=-65] +(-0.6,1);
\put(263,0){$\bullet$}
\put(303,7){$-$}
\draw (11.8,0) to [out=90, in=180] +(0.3,0.3);
\draw (12.4,0) to [out=90, in=0] +(-0.3,0.3);
\draw (11.8,1) to [out=-90, in=180] +(0.3,-0.3);
\draw (12.4,1) to [out=-90, in=0] +(-0.3,-0.3);
\put(333,7){$+$}
\draw (13.4,0) -- +(0,1);
\draw (13.9,0) -- +(0,1);
\end{scope} \end{tikzpicture}\\

Using the relations, we may move a dot in the middle to either the top or the bottom of the graph, where some terms with less dots might appear. For example, the concatenation of the following two graphs will have a dot in the middle:
$$ \begin{tikzpicture}[scale=0.7,thick,>=angle 90]
\begin{scope}[xshift=4cm]

\draw  (-4.6,0) -- +(0,1);
\draw (-4.1,0) to [out=65,in=-115] +(0.6,1);
\draw (-3.5,0) to [out=115,in=-65] +(-0.6,1);
\put(5.0,0){$\bullet$}

\draw (-4.7,-1.2) to [out=65,in=-115] +(0.6,1);
\draw (-4.0,-1.2) to [out=115,in=-65] +(-0.6,1);
\draw  (-3.5,-1.2) -- +(0,1);

\end{scope}
\end{tikzpicture}$$
But we can use the relation $(P.7)$ to replace the graph on the upper half. Using distribution law, the resulted concatenation equals to

$$ \begin{tikzpicture}[scale=0.7,thick,>=angle 90]
\begin{scope}[xshift=4cm]

\draw  (-4.6,0) -- +(0,1);
\draw (-4.1,0) to [out=65,in=-115] +(0.6,1);
\draw (-3.5,0) to [out=115,in=-65] +(-0.6,1);
\put(-4,15){$\bullet$}

\draw (-4.7,-1.2) to [out=65,in=-115] +(0.6,1);
\draw (-4.0,-1.2) to [out=115,in=-65] +(-0.6,1);
\draw  (-3.5,-1.2) -- +(0,1);

\put(20,-5){$-$}

\draw  (-2,0) -- +(0,1);
\draw (-1.5,0) to [out=90, in=180] +(0.3,0.3);
\draw (-.9,0) to [out=90, in=0] +(-0.3,0.3);
\draw (-1.5,1) to [out=-90, in=180] +(0.3,-0.3);
\draw (-0.9,1) to [out=-90, in=0] +(-0.3,-0.3);

\draw (-2.0,-1.2) to [out=65,in=-115] +(0.6,1);
\draw (-1.4,-1.2) to [out=115,in=-65] +(-0.6,1);
\draw  (-0.9,-1.2) -- +(0,1);

\put(80,-5){$+$}

\draw  (1,0) -- +(0,1);
\draw  (1.6,0) -- +(0,1);
\draw  (2.2,0) -- +(0,1);

\draw (1.0,-1.2) to [out=65,in=-115] +(0.6,1);
\draw (1.6,-1.2) to [out=115,in=-65] +(-0.6,1);
\draw  (2.2,-1.2) -- +(0,1);

\end{scope}
\end{tikzpicture}$$

We point out here that the dot in the first graph is located in the top, while the other two graphs have no dot anymore.

Let $\gamma$ be a dot Brauer $d$-diagram. Define the degree of $\gamma$ by
$$\deg \gamma \text{ = the number of dots in the graph } \gamma.$$
It gives a filtration on $\hat{\mc G}(d)$ and let $\gr\hat{\mc G}(d)$ be the associated graded algebra.
The next two lemmas follow from using the defining relations finitely many times and induction on degree, as illustrated by the example above.

\begin{lemma}[Lemma \ref{NazarovLemma44}]
If $\gamma$ is a concatenation of two dot Brauer $d$-diagrams with some dots in the middle, then $\gamma$ can be expressed as a linear combination of dot Brauer $d$-diagrams.
\end{lemma}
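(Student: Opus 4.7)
The plan is to prove the lemma by induction on the number $N$ of ``forbidden'' dots in the concatenation $\gamma$, meaning dots that sit in the interior of a strand or loop rather than at one of the permitted positions (top or bottom end of a through-strand, or left/right end of a horizontal edge). When two dot Brauer $d$-diagrams are stacked, the only dots that can be in a forbidden position are those inherited from the bottom row of the top factor or the top row of the bottom factor, so this number is finite and the base case $N=0$ is trivial.

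First I would spell out the local rewriting rules coming from the defining relations of $\Pd$. Relation $(P.5)(c)$ allows dots to slide freely past each other along a single strand; relation $(P.7)$ rewrites a dot passing through a crossing as a dot on the adjacent strand plus a correction with strictly fewer dots arising from the $\vep_a-1$ and $-\vep_a-1$ summands; relation $(P.8)$ rewrites a dot crossing a cup or cap as a dot on the other endpoint of that horizontal edge plus a correction with one fewer dot. Each such move either preserves $N$ while strictly decreasing the graph-distance of the chosen dot to the nearest permitted endpoint, or strictly decreases $N$. Equipping dotted diagrams with the lexicographic order on the pair (number of forbidden dots, sum over forbidden dots of the distance to the nearest permitted endpoint) gives a well-founded measure, so the rewriting terminates after finitely many steps. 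The correction terms of strictly smaller $N$ are handled by the inductive hypothesis, and the resulting expression is a finite $\C$-linear combination of dot Brauer $d$-diagrams, as required.

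The main obstacle I anticipate is disposing of forbidden dots that sit on a closed loop created by the concatenation (such loops do arise because the periplectic Brauer category contains cups and caps that can close up when two diagrams are stacked). After pushing all interior dots onto a single point of the loop using the rules above, such a loop carrying $k$ dots is, after conjugation by a permutation (which is harmless by $(P.2)$ together with $(P.5)(a),(b)$), of the form $\vep_1 y_1^k \vep_1$; by relation $(P.4)$, equivalently by Lemma \ref{relation4}, this expression vanishes. Hence any loop that ends up carrying at least one dot contributes zero, while dotless loops are resolved by the usual diagrammatic relations of $A_d$, which are contained in $\Pd$ via Theorem \ref{Thm::Quotient}. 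This completes the induction and the lemma.
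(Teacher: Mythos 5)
Your proof is correct and follows essentially the same route as the paper, which disposes of this lemma with the one-line remark that it ``follows from using the defining relations finitely many times and induction on degree'' as in the worked example; you have simply made the rewriting rules, the termination measure, and the induction on the number of interior dots explicit. Your treatment of closed loops via $(P.4)$ and $\vep_a^2=0$ likewise matches the paper's subsequent remark that any concatenation containing a circle, dotted or not, vanishes.
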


\begin{lemma}[Lemma \ref{NazarovLemma45}]
A non-regular dot Brauer d-diagram with degree $n$ can be expressed as a linear combination of regular ones with degree not greater than $n$.
\end{lemma}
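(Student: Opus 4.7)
My plan is to argue by induction on the pair $(n,\nu(\gamma))$, ordered lexicographically, where $n$ is the total number of dots on $\gamma$ and $\nu(\gamma)$ counts the dots sitting in \emph{non-regular} positions, namely at the bottom of a propagating line, at the left end of a cap, or at the right end of a cup. The base case $\nu(\gamma)=0$ is already regular, so nothing needs to be done. Dots at interior points of edges do not appear thanks to the preceding lemma, so I only need to worry about these three kinds of offending endpoints.

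If $\gamma$ has a non-regular dot at the bottom of a propagating line, I would simply slide it to the top of the same line using the convention that dots move freely along vertical strands. This strictly decreases $\nu(\gamma)$ without changing $n$. For the two genuinely algebraic cases I would apply the defining relations $(P.8)(a)$ and $(P.8)(b)$ locally. A dot at the right end of a cup joining top vertices $a$ and $a+1$ corresponds, after isolating the relevant factors, to a term of the form $y_{a+1}\vep_a$, and rewriting
\[
y_{a+1}\vep_a \;=\; y_a\vep_a \;+\; \vep_a
\]
via $(P.8)(b)$ replaces this by the same diagram with the dot pushed to the left end of that cup (which is a regular position, so $\nu$ drops by one) plus the same diagram with the offending dot simply deleted (so $n$ drops by one). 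Symmetrically, a dot at the left end of a cap joining $\bar a$ and $\overline{a+1}$ is a factor $\vep_a y_a$, and $(P.8)(a)$ rewrites it as $\vep_a y_{a+1}+\vep_a$, again producing one summand of the same degree with $\nu$ decreased and one summand of strictly smaller degree.

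The induction then closes: the first summand is handled by the inner induction on $\nu$, while the second falls under the outer induction on $n$. The key point to verify carefully, and the only real subtlety, is that each local rewriting is genuinely \emph{local} and does not spawn new non-regular dots elsewhere. Since $(P.8)(a)$ and $(P.8)(b)$ only involve the single cup or cap in question together with the dot adjacent to it, the remaining edges and dots of $\gamma$ are untouched; all other dots keep their old positions and hence their regularity status. I therefore expect the main obstacle to be purely bookkeeping: tracking multiple dots stacked on the same strand (encoded as $y_a^{i_a} y_{a+1}^{i_{a+1}}$) and repeatedly reducing the exponent at the forbidden position one step at a time using the identities above, which is routine once the one-dot case is cleared. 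Combined with $(P.5)(c)$ and the freedom to commute distant factors, this produces the desired expansion as a $\mathbb{C}$-linear combination of regular dot Brauer $d$-diagrams of degree at most $n$.
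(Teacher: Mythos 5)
Your argument is correct and is essentially the paper's own: the algebraic counterpart (Lemma \ref{NazarovLemma45}) is proved there by moving dots off upper right ends of cups and onto lower right ends of caps via the graded forms of $(P.8)$, with lower-degree corrections absorbed by induction on degree, and your lexicographic induction on $(n,\nu(\gamma))$ is just a more explicit bookkeeping of that same process. The one imprecision is that sliding a dot from the bottom to the top of a propagating line, and applying $(P.8)$ to a cup or cap with non-adjacent endpoints, are not cost-free in $\Pd$ itself --- conjugating the $y$'s past crossings via $(P.7)$ spawns additional terms --- but all such terms have strictly fewer dots and are exactly what your outer induction on $n$ is set up to absorb.
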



\begin{cor} \label{Cor::GraphAlgebras}
The concatenation gives a well-defined multiplication on $\hat{\mc G}(d)$.
In particular, $\hat{\mc G}(d)\cong\Pd$ as associative algebras.
\end{cor}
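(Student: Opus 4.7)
The plan is to construct an algebra homomorphism $\Phi\colon \Pd \to \hat{\mc G}(d)$ sending $s_a, \vep_a, y_j$ to their diagrammatic counterparts and then to upgrade it to an isomorphism using Theorem \ref{thm::PBW}. The broad strategy mirrors the flow already used in Section \ref{Section::PBWbasis}: define the multiplication on the diagram side by reduction to regular form, then read off both confluence and associativity from the known algebra structure of $\Pd$ via $\Phi$.

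For the tentative multiplication on $\hat{\mc G}(d)$, I would proceed as follows. Given two regular dot Brauer $d$-diagrams, first concatenate them vertically; some dots may land in the middle of the composite graph. By the first lemma (the diagrammatic avatar of Lemma \ref{NazarovLemma44}), iterated application of the local dot-sliding and dot-annihilation moves coming from (P.5)(c), (P.7), (P.8) pushes all such dots to the top or bottom boundary, producing a linear combination of dot Brauer $d$-diagrams. The second lemma (the diagrammatic avatar of Lemma \ref{NazarovLemma45}) then expresses any non-regular term as a linear combination of regular dot Brauer $d$-diagrams of no larger degree. The subtle point here is confluence of these local rewrites; rather than verifying this directly on diagrams, I plan to sidestep it by transporting the algebra structure through $\Phi$.

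The map $\Phi$ is defined on generators by the pictures of $s_a, \vep_a, y_j$ given in the excerpt, and one checks that it descends through the defining relations (P.1)--(P.8). The relations that do not involve $y_j$ are the diagrammatic identities for $A_d$ already established in \cite{Mo, KT}. The dot-involving relations (P.5)(c), (P.7), (P.8) are, by construction, exactly the local moves singled out above and are immediate from the pictures (some illustrated in the text preceding the corollary). Relation (P.4) then follows by Lemma \ref{relation4}. Thus $\Phi$ is a well-defined algebra homomorphism.

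Finally I would show $\Phi$ is bijective. A regular dot Brauer $d$-diagram, in the sense of Definition \ref{Der::RegularDotBrauerDiagram}, with underlying Brauer diagram $\gamma\in\mc G(d)$ and dot multiplicities $i_1,\ldots,i_d$ on top and $j_1,\ldots,j_d$ on bottom, is precisely the image under $\Phi$ of the regular monomial $y_1^{i_1}\cdots y_d^{i_d}\cdot \gamma \cdot y_1^{j_1}\cdots y_d^{j_d}$, where the exponents automatically satisfy \eqref{1stRegularity} and \eqref{2ndRegularity}. This gives surjectivity and a bijection between the set of regular monomials and the set of regular dot Brauer $d$-diagrams. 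Since the regular monomials form a $\C$-basis of $\Pd$ by Theorem \ref{thm::PBW}, and the regular dot Brauer $d$-diagrams span $\hat{\mc G}(d)$ by the two lemmas above, $\Phi$ is an isomorphism of $\C$-vector spaces and hence of algebras. As a by-product one concludes simultaneously that the regular dot Brauer $d$-diagrams are actually a basis of $\hat{\mc G}(d)$, that the concatenation multiplication on $\hat{\mc G}(d)$ is well defined and associative, and that the reduction procedure used to define it is confluent. The main conceptual obstacle is precisely this confluence question, which is elegantly dissolved by invoking the PBW basis from Theorem \ref{thm::PBW} rather than being tackled head-on.
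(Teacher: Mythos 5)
Your proposal is correct and follows essentially the route the paper intends: the paper states this corollary without proof, relying on the two diagrammatic lemmas together with the bijection between regular monomials and regular dot Brauer diagrams, and your use of Theorem \ref{thm::PBW} to dispose of the confluence of the dot-reduction procedure is exactly the mechanism the paper itself exploits when it combines the corollary with the PBW theorem in Theorem \ref{Cor::DiagramInterpretation}. The only cosmetic caveat is that $\Phi$ cannot literally be introduced as an algebra homomorphism before the multiplication on $\hat{\mc G}(d)$ is known to be well defined; your final paragraph already repairs this by working with the linear bijection between the two bases and transporting the product from $\Pd$, so the argument stands.
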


\begin{rem}
	As in the case of \cite{KT}, if any circle (no matter how many dots attached to it) appears in a graph $\gamma$ resulted from the concatenation of two dot Brauer $d$-diagrams, then $\gamma=0$.
\end{rem}

We conclude this article by the following theorem, which is an immediate result of Theorem \ref{thm::PBW} and Corollary \ref{Cor::GraphAlgebras}.
\begin{thm} \label{Cor::DiagramInterpretation}
$\Pd$ can be realized as a diagram algebra with basis consisting of regular dot Brauer $d$-diagrams by the following identifications:\\
$$
\begin{tikzpicture}[scale=0.7,thick,>=angle 90]
\begin{scope}[xshift=4cm]
\draw (-6,0.5) node[] {${\varepsilon}_i\equiv$};
\draw  (-5,0) -- +(0,1);
\draw [dotted] (-4.65,.5) -- +(0.4,0);
\draw  (-4,0) -- +(0,1);
\draw (-3.2,0) to [out=90, in=180] +(0.3,0.3);
\draw (-2.6,0) to [out=90, in=0] +(-0.3,0.3);
\draw (-3.2,1) to [out=-90, in=180] +(0.3,-0.3);
\draw (-2.6,1) to [out=-90, in=0] +(-0.3,-0.3);
\draw  (-1.8,0) -- +(0,1);
\draw [dotted] (-1.5,.5) -- +(0.4,0);
\draw  (-0.85,0) -- +(0,1);
\put(12,22){\tiny{$i$}}\put(22,22){\tiny{$i+1$}}
\put(12,-10){\tiny{$\overline{i}$}}\put(22,-10){\tiny{$\overline{i+1}$}}

\draw (2,0.5) node[] {${s}_i\equiv$};
\draw  (3,0) -- +(0,1);
\draw (4.6,0) to [out=65,in=-115] +(0.6,1);
\draw (5.2,0) to [out=115,in=-65] +(-0.6,1);
\draw [dotted] (3.35,.5) -- +(0.4,0);
\draw  (4,0) -- +(0,1);
\draw  (6,0) -- +(0,1);
\draw [dotted] (6.3,.5) -- +(0.4,0);
\draw  (7,0) -- +(0,1);
\put(167,22){\tiny{$i$}}\put(178,22){\tiny{$i+1$}}
\put(167,-10){\tiny{$\overline{i}$}}\put(178,-10){\tiny{$\overline{i+1}$}}

\end{scope}
\end{tikzpicture}
$$

$$
\begin{tikzpicture}[scale=0.7,thick,>=angle 90]
\begin{scope}[xshift=4cm]
\draw (2,0.5) node[] {${y}_i\equiv$};
\draw  (3,0) -- +(0,1);
\draw (5,0) -- +(0,1);
\draw [dotted] (3.55,.5) -- +(0.4,0);
\draw  (4.4,0) -- +(0,1);
\draw  (5.6,0) -- +(0,1);
\draw [dotted] (6.1,.5) -- +(0.4,0);
\draw  (7,0) -- +(0,1);
\put(177,22){\tiny{$i$}}
\put(177,-10){\tiny{$\overline{i}$}}
\put(176.5, 13){$\bullet$}

\end{scope}
\end{tikzpicture}
$$
\\
The regular monomials correspond to regular dot Brauer $d$-diagrams under the identification above.
\end{thm}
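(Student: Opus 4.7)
The plan is to combine the two ingredients already in place: Theorem~\ref{thm::PBW} provides a linear basis of $\Pd$ by regular monomials, while Corollary~\ref{Cor::GraphAlgebras} identifies $\Pd$, as an associative algebra, with the diagram algebra $\hat{\mc G}(d)$ under the prescribed image of the generators $s_i$, $\vep_i$, $y_j$. The theorem is then a translation: regular monomials correspond bijectively to regular dot Brauer $d$-diagrams.

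First, I would invoke Corollary~\ref{Cor::GraphAlgebras} to work inside $\hat{\mc G}(d)$, so that the assignments
$s_i \mapsto$ crossing at $(i,i{+}1)$, $\vep_i \mapsto$ cup-cap at $(i,i{+}1)$, and $y_j \mapsto$ identity with a single dot on the $j$-th strand are an algebra isomorphism. Second, I would unpack what happens to a regular monomial
\[
u = y_1^{i_1}\cdots y_d^{i_d}\cdot \gamma \cdot y_1^{j_1}\cdots y_d^{j_d}, \qquad \gamma\in \mc G(d),
\]
under this isomorphism. Since dots commute with each other and may freely slide along straight lines, the left factor stacks $i_k$ dots on top of the $k$-th upper endpoint of $\gamma$, and the right factor stacks $j_\ell$ dots on the bottom of the $\ell$-th lower endpoint of $\gamma$; the result is a dot Brauer $d$-diagram whose underlying graph is $\gamma$.

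The key step is to match the two regularity conditions. The condition \eqref{1stRegularity} says $i_k=0$ whenever $k$ is an upper right end of a horizontal edge of $\gamma$, which under the diagrammatic interpretation is exactly the statement that no dot lies on the right end of a cup; this is condition (2) of Definition~\ref{Der::RegularDotBrauerDiagram}. Symmetrically, \eqref{2ndRegularity} says $j_t\ne 0$ forces $\ov t$ to be a lower right end of a horizontal edge of $\gamma$, which is exactly condition (1): a bottom dot must sit on the right end of a cap. Thus the map $u\mapsto$ its diagram is a bijection between regular monomials and regular dot Brauer $d$-diagrams.

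The main (minor) obstacle is purely bookkeeping: one must verify that after the identification of $y_j$ with the dotted strand, the ordered product $y_1^{i_1}\cdots y_d^{i_d}$ on the top and $y_1^{j_1}\cdots y_d^{j_d}$ on the bottom really produce exactly the decorated graph described, with no hidden crossings or additional relations being invoked. This is immediate once one observes that the dots are distributed on pairwise distinct strands and can be slid freely to the top (respectively, bottom) along those strands. Combining the bijection with Theorem~\ref{thm::PBW}, we conclude that the regular dot Brauer $d$-diagrams form a $\C$-basis of $\Pd$, which is the content of Theorem~\ref{Cor::DiagramInterpretation}. \qed
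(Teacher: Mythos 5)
Your proposal is correct and follows essentially the same route as the paper, which derives the theorem directly from Theorem~\ref{thm::PBW} together with Corollary~\ref{Cor::GraphAlgebras}, with the matching of the regularity conditions \eqref{1stRegularity}--\eqref{2ndRegularity} to Definition~\ref{Der::RegularDotBrauerDiagram} already noted in the remark following that definition. Your write-up simply makes the bookkeeping of that correspondence explicit.
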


\appendix
\section{}
\subsection{} We need some preparations to prove the relations $(P.8)$ in Proposition \ref{not4}.
Note that we may assume the relations $(P.6)$ hold in $\End_\frak{g}(M\otimes V^{\otimes d})$, since their proofs do not involve $(P.8)$ or their consequences. For convenience, set $\ell=\dim \mf g$ throughout the appendix.
\begin{lemma} \label{lem::BasicLemForCalculation} For each $m\in M\otimes V^{\otimes i-1}$ and $w \in V^{\otimes d-(i+1)}$, we have
\begin{align}
   &\sum_{k\in I}(-1)^{\ov{e_k}}m\otimes \left( x_j e_{k} \otimes e_{\bar{k}} + (-1)^{\ov{x_j}\ov{e_k}}  e_{k} \otimes x_j e_{\bar{k}}\right) \otimes w =0 , \label{Eq::lem::BasicLemForCalculation-Eq1} \\
   &\vep_i\left(m \otimes( x_j e_{a} \otimes e_b + (-1)^{\ov{x_j}\ov{e_a}} e_{a} \otimes x_j e_b)\otimes w  \right)=0, \label{Eq::lem::BasicLemForCalculation-Eq2}
\end{align} for all $1\leq j \leq  \ell$ and $1\leq a,b \leq n$.
\end{lemma}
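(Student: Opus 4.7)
The plan is to reduce both identities to a single input: the $\mf g$-invariance of the odd bilinear form $(\cdot,\cdot)$ on $V$, namely
\[
(x_j v, w) + (-1)^{\ov{x_j}\ov{v}}(v, x_j w) = 0
\]
for all $1 \leq j \leq \ell$ and homogeneous $v, w \in V$. This is simply the defining property of $\mf g = \mf p(n) \subset \gl(n|n)$ as the stabilizer of $(\cdot,\cdot)$, and it is the only structural ingredient needed.

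For \eqref{Eq::lem::BasicLemForCalculation-Eq2}, since $\vep_i$ only touches the tensor slots occupied by $e_a$ and $e_b$, it is enough to check
\[
\vep(x_j e_a \otimes e_b) + (-1)^{\ov{x_j}\ov{e_a}} \vep(e_a \otimes x_j e_b) = 0
\]
in $V \otimes V$. Setting $\Theta := \sum_{k \in I}(-1)^{\ov{e_k}} e_k \otimes e_{\bar k}$, Lemma \ref{FormulaForelemente} reads $\vep(u_1 \otimes u_2) = (u_1, u_2)\,\Theta$, so the displayed expression collapses to $\bigl[(x_j e_a, e_b) + (-1)^{\ov{x_j}\ov{e_a}}(e_a, x_j e_b)\bigr]\Theta$, which vanishes by invariance. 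This part is essentially a one-liner.

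For \eqref{Eq::lem::BasicLemForCalculation-Eq1}, observe that the left-hand side equals $m \otimes (x_j \cdot \Theta) \otimes w$, where $\mf g$ acts on $V \otimes V$ through the super-coproduct. Hence the identity reduces to $x_j \cdot \Theta = 0$, i.e., to the $\mf g$-invariance of $\Theta$. I would verify this by extracting, for arbitrary $p, q \in I$, the coefficient of $e_p \otimes e_q$ in $x_j \cdot \Theta$. Writing $(v)_r$ for the $e_r$-coefficient of $v$, only $k = \bar q$ and $k = p$ contribute, giving $(-1)^{\ov{e_{\bar q}}}(x_j e_{\bar q})_p$ and $(-1)^{\ov{e_p}(1+\ov{x_j})}(x_j e_{\bar p})_q$ respectively. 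The invariance relation, applied with $v = e_{\bar q}$ and $w = e_{\bar p}$, rewrites $(x_j e_{\bar q})_p = -(-1)^{\ov{x_j}\ov{e_{\bar q}}}(x_j e_{\bar p})_q$, so the total coefficient becomes
\[
(x_j e_{\bar p})_q \cdot \Bigl[(-1)^{\ov{e_p}(1+\ov{x_j})} - (-1)^{\ov{e_{\bar q}}(1+\ov{x_j})}\Bigr].
\]
Whenever the scalar factor is nonzero, the parity of $e_q$ forces $\ov{e_q} \equiv \ov{x_j} + \ov{e_{\bar p}}$, hence $\ov{e_{\bar q}} - \ov{e_p} \equiv \ov{x_j} \pmod 2$, and the bracket vanishes because $\ov{x_j}(1 + \ov{x_j}) \equiv 0 \pmod 2$.

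The only real obstacle I anticipate is the sign bookkeeping in the second part, since one must juggle the super-coproduct signs, the formula for $\vep$, and the parity constraints coming from the condition $(x_j e_{\bar p})_q \neq 0$. No input from the explicit choice of basis $\{x_i\}$ in Section~\ref{Section::glnnandpn} or from the earlier relations $(P.1)$--$(P.7)$ enters; just the single invariance relation.
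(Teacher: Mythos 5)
Your proof is correct. For the second identity your argument is in substance the paper's: both reduce, via Lemma~\ref{FormulaForelemente}, to the vanishing of $\bigl[(x_j e_{a},e_b)+(-1)^{\ov{x_j}\ov{e_a}}(e_{a},x_je_b)\bigr]$ times the vector $\sum_{k\in I}(-1)^{\ov{e_k}}e_k\otimes e_{\bar{k}}$; the paper phrases this cancellation through two parity facts about the values $(x_je_b,e_a)$, which together amount to exactly the invariance-plus-symmetry relation you invoke, so your formulation is the same cancellation stated more directly. For the first identity the routes genuinely differ: the paper disposes of it in one line by observing that $\vep(V\otimes V)$ is a trivial $\mf g$-module (implicitly using that $\vep$ is $\mf g$-equivariant, so its one-dimensional homogeneous image carries no odd action and only a character of $\mf g_{\bar 0}=\gl(n)$, which still has to be checked to vanish), whereas you verify $x_j$ kills $\sum_{k\in I}(-1)^{\ov{e_k}}e_k\otimes e_{\bar{k}}$ directly by extracting the coefficient of $e_p\otimes e_q$ and applying the invariance of the form. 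I checked your sign bookkeeping, including the parity constraint forced by $(x_je_{\bar{p}})_q\neq 0$, and it is right. Your version is more self-contained, since it does not presuppose the equivariance of $\vep$ or the triviality of the resulting one-dimensional module; the paper's is shorter but leaves that observation to the reader.
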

\begin{proof}
Observe that $\vep(V\otimes V)$ is a trivial $\mf g$-module, and \eqref{Eq::lem::BasicLemForCalculation-Eq1} follows.

Consider \eqref{Eq::lem::BasicLemForCalculation-Eq2}.
    \begin{align}
    &\vep_i\left(m \otimes( x_j e_{a} \otimes e_b + (-1)^{\ov{x_j}\ov{e_a}} e_{a} \otimes x_j e_b)\otimes w  \right) \\
    &= \vep_i\left(m \otimes \big( (x_j e_{a},e_b )e_\barb \otimes e_b + (-1)^{\ov{x_j}\ov{e_a}} e_{a} \otimes (x_j e_b,e_a)e_\bara \big)\otimes w  \right)\\
    &= m \otimes \left( \big( (x_j e_{a},e_b )  + (-1)^{\ov{x_j}\ov{e_a}} (x_j e_b,e_a) \big) \sum_{k\in I}(-1)^{\ov{e_k}}e_k \otimes e_{\bar{k}} \right) \otimes w =0.   \label{Eq::lem::BasicLemForCalculation::ProofEq2}
  \end{align}
      The  equation \eqref{Eq::lem::BasicLemForCalculation::ProofEq2} follows from the following two facts \begin{itemize}
      \item[(1)] $(-1)^{\ov{x_j}+\ov{e_b}} = (-1)^{\ov{e_{a}}+1}$ and so $(-1)^{\ov{x_j}\ov{e_b}} = (-1)^{\ov{x_j}\ov{e_{a}}}$, if $(x_je_{b}, e_a)\neq 0$.
      \item[(2)] $(x_je_{{b}},e_{a})  = -(-1)^{\ov{x_j}\ov{e_b}}(x_je_{a}, e_{b})$.
    \end{itemize}
\end{proof}

The following lemma is an analogue of \cite[Lemma 8.4]{ES}.
\begin{lemma} \label{lem::ComLem1}
  For $k>i+1$ and $i>0$, we have \begin{align}\label{Eq::eAndOmegaEq1}(\Omega_{i,k}+\Omega_{i+1,k})\vep_i = 0 = \vep_i(\Omega_{i,k}+\Omega_{i+1,k}). \end{align}
\end{lemma}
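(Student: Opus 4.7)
The plan is to reduce both identities in \eqref{Eq::eAndOmegaEq1} to the two vanishing statements proved in Lemma~\ref{lem::BasicLemForCalculation}. The key observation is that $\vep_i$ only ``sees'' the tensor slots $i$ and $i+1$, while $\Omega_{i,k}+\Omega_{i+1,k}$ acts on slots $i$, $i+1$ (via either $x_j$ or nothing) together with slot $k$ (via $x_j^*$), so after pulling the $x_j^*$ factor at slot $k$ out as a scalar per $j$, the whole identity collapses to a pair of local computations on slots $i,i+1$.

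First I would fix a homogeneous tensor $m\otimes e_{t_1}\otimes\cdots\otimes e_{t_d}\in M\otimes V^{\otimes d}$ and expand $\vep_i(\Omega_{i,k}+\Omega_{i+1,k})$ using the definitions in \eqref{VWact} and the displayed formulas for $\Omega_{r,j}$. Because $k>i+1$, the operator $x_j^{*}$ sitting at slot $k$ commutes past $\vep_i$ and can be factored out. What remains at slots $i,i+1$ is, up to common sign factors of the form $(-1)^{\overline{x_j}\cdot(\text{prefix parity})}$, exactly
\[
\vep_i\Bigl(m\otimes\cdots\otimes\bigl(x_j e_{t_i}\otimes e_{t_{i+1}}+(-1)^{\overline{x_j}\,\overline{e_{t_i}}}\,e_{t_i}\otimes x_j e_{t_{i+1}}\bigr)\otimes\cdots\Bigr),
\]
and each such term vanishes by \eqref{Eq::lem::BasicLemForCalculation-Eq2}. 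This gives $\vep_i(\Omega_{i,k}+\Omega_{i+1,k})=0$.

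For the other identity $(\Omega_{i,k}+\Omega_{i+1,k})\vep_i=0$, I would first apply $\vep_i$ using Lemma~\ref{FormulaForelemente}: the result is nonzero only if $t_i=\overline{t_{i+1}}$, in which case slots $i,i+1$ are replaced by $\sum_{r\in I}(-1)^{\overline{e_r}}e_r\otimes e_{\overline{r}}$. Then applying $\Omega_{i,k}+\Omega_{i+1,k}$ pulls $x_j^*$ out at slot $k$ (again, $k>i+1$ means the intervening parity factors are common to both summands), and the sum at slots $i,i+1$ becomes
\[
\sum_{r\in I}(-1)^{\overline{e_r}}\bigl(x_j e_r\otimes e_{\overline{r}}+(-1)^{\overline{x_j}\,\overline{e_r}}\,e_r\otimes x_j e_{\overline{r}}\bigr),
\]
which is precisely the expression in \eqref{Eq::lem::BasicLemForCalculation-Eq1} and therefore vanishes.

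The main obstacle I anticipate is bookkeeping: one has to verify that the prefactors $(-1)^{\overline{x_j}\cdot(\sum\overline{e_{t_s}}+\overline{m})}$ appearing in $\Omega_{i,k}$ and $\Omega_{i+1,k}$ differ only by the factor $(-1)^{\overline{x_j}\,\overline{e_{t_i}}}$ (relative shift by one slot), so that the two summands combine into exactly the pattern recognized by Lemma~\ref{lem::BasicLemForCalculation}, and similarly that the $(-1)^{\overline{e_r}}$ from $\vep_i$ does not conflict with the parity signs generated by $\Omega_{i+1,k}$ when the expansion is read off. Once these signs are checked once and for all, both equalities in \eqref{Eq::eAndOmegaEq1} follow immediately.
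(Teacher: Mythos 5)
Your plan is correct and follows essentially the same route as the paper: the paper likewise reduces $\vep_i(\Omega_{i,k}+\Omega_{i+1,k})=0$ to \eqref{Eq::lem::BasicLemForCalculation-Eq2} and $(\Omega_{i,k}+\Omega_{i+1,k})\vep_i=0$ to \eqref{Eq::lem::BasicLemForCalculation-Eq1}, after factoring out the $x_j^*$ at slot $k$ and checking that the two summands differ by exactly the sign $(-1)^{\overline{x_j}\,\overline{e_{t_i}}}$. The only cosmetic difference is that the paper writes out the case $i=1$, $k=3$ explicitly and appeals to "similar calculations" for the general case, whereas you sketch the general bookkeeping directly.
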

\begin{proof}
   We first prove \eqref{Eq::eAndOmegaEq1} for $i=1$ and $k=3$. For all $a,b,c\in I$, we may observe that
   \begin{align*}
     &\big((\Omega_{1,3}+\Omega_{2,3})\vep_1\big)(m\otimes e_a\otimes e_\bara \otimes e_c) \\
     &=  (\Omega_{1,3}+\Omega_{2,3}) \sum_{k\in I} (-1)^{\ov{e_k}}m\otimes e_k\otimes e_{\ov{k}}\otimes e_c \\
     &=  \sum_{j=1}^{\ell}\sum_{k\in I} (-1)^{\ov{e_k} +\ov{x_j} }m\otimes x_je_k\otimes e_{\ov{k}}\otimes x_j^*e_c + \sum_{j=1}^{\ell}\sum_{k\in I} (-1)^{\ov{e_k}+\ov{x_j}+\ov{x_j} \ov{e}_k}m\otimes e_k\otimes x_je_{\ov{k}}\otimes x_j^*e_c \\ &=  \sum_{j=1}^{\ell} \sum_{k\in I} (-1)^{\ov{e_k} +\ov{x_j} }m\otimes \left(x_je_k\otimes e_{\ov{k}} + (-1)^{\ov{x_j} \ov{e}_k} e_k\otimes x_je_{\ov{k}}\right)\otimes x_j^*e_c, \\
   \end{align*} which is zero  by Lemma \ref{lem::BasicLemForCalculation}

     We now consider the right hand side of identity \eqref{Eq::eAndOmegaEq1} as follows: \begin{align*}
     &\big(\vep_1(\Omega_{1,3}+\Omega_{2,3})\big)(m\otimes e_a\otimes e_b \otimes e_c) \\
     &=  \vep_1 \left(\sum_{j=1}^{\ell}  m\otimes \left( x_je_a\otimes e_{b}+ (-1)^{\ov{e_a}\ov{x_j}}e_a\otimes x_je_b\right)\otimes (-1)^{\ov{x_j}(\ov{e_a}+\ov{e_b})}x_j^*e_c\right),
   \end{align*} which is zero by Lemma \ref{lem::BasicLemForCalculation}.

   Similar calculations hold for general $i$ and $k$ by using Lemma \ref{lem::BasicLemForCalculation}. This completes the proof.
\end{proof}

\subsection{Proof of equation $(P.8)$(a)} \label{PfOf8a} In this subsection, we prove the equation (P.8)(b).
\begin{lemma} \label{lem::Eq::Cal::VW82} We have
 \begin{align}\label{Eq::Cal::VW82} &\sum_{k\in I}(-1)^{\ov{e_{k}}} x^*_ie_k\otimes e_{\bar{k}} - \sum_{k\in I}(-1)^{\ov{e_{k}}+\ov{x_i}\ov{e_k}}e_k\otimes x^*_ie_{\bar{k}} = 0 ,\end{align}
 for all $1\leq i\leq \ell$.
\end{lemma}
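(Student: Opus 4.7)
My plan is to reduce \eqref{Eq::Cal::VW82} to a coordinate identity on matrix entries and then verify it case-by-case on the explicit dual basis. Writing the action of $x_i^*\in\mf g^*\subset\mf{gl}(n|n)$ on $V$ in coordinates as $x_i^*e_k = \sum_\ell (x_i^*)_{\ell k}e_\ell$, expansion of both sides of \eqref{Eq::Cal::VW82} in the standard basis $\{e_a\otimes e_b\}_{a,b\in I}$ of $V\otimes V$ shows that the coefficient of $e_a\otimes e_b$ on the left-hand side equals $(-1)^{\ov{e_{\bar b}}}(x_i^*)_{a,\bar b}$, while the corresponding coefficient on the right-hand side equals $(-1)^{\ov{e_a}+\ov{x_i^*}\ov{e_a}}(x_i^*)_{b,\bar a}$. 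Using $(-1)^{\ov{e_{\bar b}}}=-(-1)^{\ov{e_b}}$, the lemma is equivalent to checking
\[
(x_i^*)_{a,\bar b} \;=\; -(-1)^{\ov{e_a}+\ov{e_b}+\ov{x_i^*}\ov{e_a}}\,(x_i^*)_{b,\bar a}
\]
for every pair $a,b\in I$.

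Since this condition is linear in $x_i^*$, I would verify it on the explicit basis $\{E_{st}^*,X_{st}^*,X_{ss}^*,Y_{st}^*\}$ of $\mf g^*$ given just before the appendix. Alternatively, one can exploit the uniform block description $\mathfrak g^*=\{\begin{pmatrix} A & J\\ K & A^t\end{pmatrix}: J \text{ skew},\, K \text{ symmetric}\}$ and split into four cases depending on whether each of $a,b$ lies in $I^0$ or in $I\setminus I^0$. The ``both in $I^0$'' case reduces to the skew-symmetry of $J$; the ``both barred'' case reduces to the symmetry of $K$ together with the fact that the generators with $K\ne 0$ (namely $X_{st}^*$ and $X_{ss}^*$) are all odd, supplying the needed sign $(-1)^{\ov{x_i^*}}=-1$; the two mixed cases reduce to the tautology $A_{xy}=A_{xy}$, using that $E_{st}^*$ (the only basis element with $A\ne 0$) is even, so $(-1)^{\ov{x_i^*}}=1$.

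Conceptually, the identity expresses the fact that, whereas elements of $\mf g=\mf p(n)$ \emph{preserve} the odd form $(\cdot,\cdot)$ (this is exactly what produces the $+$ sign in Lemma \ref{lem::BasicLemForCalculation}\eqref{Eq::lem::BasicLemForCalculation-Eq1}), the dual space $\mf g^*$ consists of elements which satisfy the twisted ``anti-invariance'' $(Xv,w)=(-1)^{\ov{X}\ov v}(v,Xw)$; the $-$ sign in \eqref{Eq::Cal::VW82} is precisely the shadow of this sign change. I expect no serious obstacle beyond careful sign bookkeeping in the four parity cases; once those are collected, linearity over $\mf g^*$ concludes the proof.
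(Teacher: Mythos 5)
Your proposal is correct. The paper's own proof of Lemma \ref{lem::Eq::Cal::VW82} is likewise a direct case-by-case verification, but it is organized by substituting each of the four families of explicit dual basis vectors $E_{ts}^*$, $Y_{st}^*$, $X_{st}^*$, $X_{ss}^*$ into the two sums and computing the resulting tensors outright, whereas you first extract the coefficient of $e_a\otimes e_b$ and reduce the lemma to the single matrix-entry identity
$(x_i^*)_{a,\bar b} = -(-1)^{\ov{e_a}+\ov{e_b}+\ov{x_i^*}\,\ov{e_a}}(x_i^*)_{b,\bar a}$,
which you then verify from the block description of $\mf g^*$ (skew-symmetry of $J$, symmetry of $K$, the $A$ versus $A^t$ pattern, together with the parities of the homogeneous pieces). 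I checked your coefficient extraction and the four parity cases; they are all right, using $\ov{x_i}=\ov{x_i^*}$ and $(-1)^{\ov{e_{\bar b}}}=-(-1)^{\ov{e_b}}$. Your organization has the advantage of being basis-independent --- it shows the identity holds for every homogeneous element of $\mf g^*$, not merely the chosen basis --- and it isolates the conceptual source of the minus sign (the twisted anti-invariance of $\mf g^*$ relative to the odd form), which complements equation \eqref{Eq::lem::BasicLemForCalculation-Eq1} for $\mf g$. The paper's version avoids coefficient bookkeeping at the cost of four explicit tensor computations. One small caution: the condition is linear in $x_i^*$ only within a fixed parity, since the sign $(-1)^{\ov{x_i}\ov{e_k}}$ is not linear in $x_i^*$; so the reduction should be phrased as a reduction to homogeneous elements of $\mf g^*$ rather than to an arbitrary basis. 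As all the listed basis vectors are homogeneous, this costs nothing.
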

\begin{proof}
The proof is completed by the following calculations.
\begin{itemize}
  \item[(1)] Set $s,t\in I^0$  and $x_i^*:=E_{ts}^*$ in \eqref{Eq::Cal::VW82}, we obtain  $$ \frac{1}{2}\left((e_s \otimes e_{\ov t} - e_{\ov t}\otimes e_s) -(-e_{\ov t}\otimes e_s +e_s \otimes e_{\ov t})\right) =0.$$
  \item[(2)] Set $s,t\in I^0$ with $s\neq t$,  and $x_i^*:=Y_{st}^*$ in \eqref{Eq::Cal::VW82}, we obtain  $$ \frac{-1}{2}\left((-e_s \otimes e_{t} - e_{t}\otimes e_s) -(-e_{t}\otimes e_s -e_s \otimes e_{t})\right) =0.$$
  \item[(3)] Set $s,t\in I^0$ with $s\neq t$,  and $x_i^*:=X_{st}^*$ in \eqref{Eq::Cal::VW82}, we obtain  $$ \frac{-1}{2}\left((e_{\ov s} \otimes e_{\ov t} -(-1)(-1) e_{\ov t}\otimes e_{\ov s}) +(e_{\ov t}\otimes e_{\ov s} - e_{\ov s} \otimes e_{\ov t})\right) =0.$$
  \item[(4)] Set $s\in I^0$ and $x_i^*:=X_{ss}^*$ in \eqref{Eq::Cal::VW82}, we obtain  $$ -(e_{\ov s} \otimes e_{\ov s} -(-1)(-1) e_{\ov s}\otimes e_{\ov s}) =0.$$
\end{itemize}
\end{proof}

\begin{lemma} \emph{(}Equation  $(P.8)$\emph{(}a\emph{)}\emph{)}
  $\vep_a (y_a- y_{a+1}) = \vep_a$, for all $1\leq a\leq d-1$.
\end{lemma}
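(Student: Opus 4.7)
I plan to verify the operator identity $(y_a - y_{a+1})\circ \vep_a = \vep_a$ on $M \otimes V^{\otimes d}$; under the right-action convention $\Psi_M : \Pd \to \End_{\mf g}(M \otimes V^{\otimes d})^{\mathrm{opp}}$ this is exactly what $(P.8)(\mathrm{a})$ asserts. I would evaluate both sides on a homogeneous basis element $X = m \otimes e_{t_1} \otimes \cdots \otimes e_{t_d}$; by Lemma \ref{FormulaForelemente} we may assume $t_{a+1} = \ov{t_a}$, in which case
\[ \vep_a(X) \;=\; m \otimes e_{t_1}\otimes \cdots \otimes e_{t_{a-1}} \otimes \omega \otimes e_{t_{a+2}}\otimes\cdots\otimes e_{t_d}, \qquad \omega := \sum_{k\in I}(-1)^{\ov{e_k}} e_k \otimes e_{\ov k}. \]
Using the decomposition $y_a - y_{a+1} = \sum_{\ell=0}^{a-1}(\Omega_{\ell,a} - \Omega_{\ell,a+1}) - \Omega_{a,a+1}$ from \eqref{def::PolynomialVars}, the verification splits into two parts.

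First, I would show $(\Omega_{\ell,a} - \Omega_{\ell,a+1})\vep_a = 0$ for each $0 \leq \ell \leq a-1$. After substituting $\vep_a(X)$ and unpacking the two $\Omega$'s by definition, the inner sum over $k \in I$ reduces for each fixed summation index $j$ to
\[ \sum_{k\in I}\left((-1)^{\ov{e_k}} x_j^* e_k \otimes e_{\ov k} \;-\; (-1)^{\ov{e_k} + \ov{x_j}\ov{e_k}} e_k \otimes x_j^* e_{\ov k}\right),\]
which vanishes by Lemma \ref{lem::Eq::Cal::VW82}. Conceptually this is the $\mf g^*$-analogue of Lemma \ref{lem::ComLem1}; the fact that Lemma \ref{lem::Eq::Cal::VW82} has the opposite sign to Lemma \ref{lem::BasicLemForCalculation}(1) is precisely what causes the difference $\Omega_{\ell,a} - \Omega_{\ell,a+1}$, rather than the sum, to annihilate $\vep_a(X)$.

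Second, I would compute $\Omega_{a,a+1}\vep_a = -\vep_a$. Reading off the Koszul sign in the definition shows $\Omega_{a,a+1}$ restricted to positions $a, a+1$ acts as $2C_{V,V}$, and by \eqref{def::vep} we have $2 C_{V,V} = \vep + s$. A short direct calculation then yields $\vep(\omega) = \bigl(\sum_{k\in I}(-1)^{\ov{e_k}}\bigr)\,\omega = 0$ (since $|I^0| = |I \setminus I^0| = n$) and $s(\omega) = -\omega$ (via the relabelling $k \mapsto \ov k$, using $\ov{e_k}\cdot\ov{e_{\ov k}}\equiv 0 \pmod 2$), so $2C_{V,V}(\omega) = -\omega$ and therefore $\Omega_{a,a+1}\vep_a(X) = -\vep_a(X)$. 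Combining both steps gives $(y_a - y_{a+1})\circ \vep_a = 0 - (-\vep_a) = \vep_a$, as required.

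The main obstacle is the careful Koszul sign bookkeeping needed to unpack $\Omega_{\ell,a}$ and $\Omega_{\ell,a+1}$ into the form where Lemma \ref{lem::Eq::Cal::VW82} applies. The delicate point is that the sign convention in Lemma \ref{lem::Eq::Cal::VW82} (controlling the dual element $x_j^* \in \mf g^*$ that appears at position $a$ or $a+1$) is opposite to that in Lemma \ref{lem::BasicLemForCalculation}(1) (for $x_j \in \mf g$); this asymmetry is exactly what selects $\Omega_{\ell,a} - \Omega_{\ell,a+1}$ as the vanishing combination, leaving only the diagonal contribution $-\Omega_{a,a+1}\vep_a = \vep_a$.
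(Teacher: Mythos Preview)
Your proposal is correct and follows essentially the same approach as the paper: both split the computation into the cross terms (which vanish via Lemma~\ref{lem::Eq::Cal::VW82}) and the diagonal piece $\Omega_{a,a+1}\vep_a$, and both identify the latter with $2C_{V,V}\circ\vep = -\vep$. The only cosmetic differences are that the paper collapses all of your $\ell$-indexed cross terms into a single sum by viewing $m\in M\otimes V^{\otimes a-1}$, and that the paper obtains $2C_{V,V}\circ\vep=-\vep$ from the already-established relations $(P.3)$ and $(P.6)(\mathrm a)$ rather than by computing $\vep(\omega)$ and $s(\omega)$ directly.
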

\begin{proof}
   Let $m\in M\otimes V^{\otimes a-1}$ and $v\in V^{\otimes d-a-1}$ be homogenous elements. Then for all $i,j \in I$ we have the following calculation.
   \begin{align}
   &\big((y_a- y_{a+1})\vep_a\big)(m\otimes e_i \otimes e_j \otimes v)  \\
   & = 2\delta_{i,\ov j}(y_a -y_{a+1}) \left(m\otimes \sum_{k\in I} (-1)^{\ov{e_k}} e_k \otimes e_{\ov k} \otimes v \right) \\
   &= 2\delta_{i,\ov j}\Big(\sum_{q=1}^{\ell} x_qm\otimes \sum_{k\in I} (-1)^{\ov{e_k}+\ov{x_q}\ov m} x^*_qe_k \otimes e_{\ov k} \otimes v - \sum_{q=1}^{\ell} x_qm\otimes \sum_{k\in I} (-1)^{\ov{e_k}+ \ov{x_q}(\ov m+ \ov{e_k})} e_k \otimes x_q^*e_{\ov k} \otimes v \\
   &-\sum_{q=1}^{\ell} m\otimes \sum_{k\in I} (-1)^{\ov{e_k}+\ov{x_q}\ov{e_k}} x_qe_k \otimes x_q^*e_{\ov k} \otimes v \Big) =  -2\delta_{i,\ov j}\sum_{q=1}^{\ell} m\otimes \sum_{k\in I} (-1)^{\ov{e_k}+\ov{x_q}\ov{e_k}} x_qe_k \otimes x_q^*e_{\ov k} \otimes v.
   \end{align} The last equation follows from Lemma \ref{lem::Eq::Cal::VW82}. We now observe that the last term above can be rewritten as follows:
   \begin{align}
   &-2\delta_{i,\ov j}\sum_{q=1}^{\ell} m\otimes \sum_{k\in I} (-1)^{\ov{e_k}+\ov{x_q}\ov{e_k}} x_qe_k \otimes x_q^*e_{\ov k} \otimes v = - \delta_{i,\ov j} \,m\otimes  \Big(2C_{V,V}\big(\vep(e_i \otimes e_{j})\big)\Big) \otimes v.
   \end{align}
   By definition of $\vep$ in \eqref{def::vep} and $(P.6)$(a), we have $2C_{V,V}\circ \vep = (\vep +s)\circ \vep =s\circ \vep = -\vep$ in $\text{End}_{\G}(V^{\otimes 2})$. This completes the proof.
\end{proof}

\subsection{Proof of equation (P.8)(b)}\label{PfOf8b} In this subsection, we prove the equation (P.8)(b).
\begin{lemma} \label{lem::Eq::Cal::VW81} We have
 \begin{align}\label{Eq::Cal::VW81} &\vep(x_i^*e_p \otimes e_q - (-1)^{\ov{x_i}\ov{e_p}}e_p\otimes x_i^*e_q)= 0 ,\end{align}
 for all $1\leq i\leq \ell$ and $p,q \in I$.
\end{lemma}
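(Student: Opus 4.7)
My plan is to exploit the simple, rank-one nature of $\vep$ revealed by Lemma \ref{FormulaForelemente}. That lemma says $\vep(e_a\otimes e_b)=(e_a,e_b)\cdot\Omega$, where
$$\Omega:=\sum_{k\in I}(-1)^{\ov{e_k}}(e_k\otimes e_{\ov k})$$
is a single fixed vector, independent of $a,b$. In other words, $\vep$ factors as the coevaluation $c\colon\C\to V\otimes V$, $1\mapsto\Omega$, composed with the evaluation $e\colon V\otimes V\to\C$, $e_a\otimes e_b\mapsto(e_a,e_b)$. Applying $\vep$ to the expression in \eqref{Eq::Cal::VW81} therefore reduces the lemma to the scalar identity
$$(x_i^*e_p,e_q)-(-1)^{\ov{x_i}\ov{e_p}}(e_p,x_i^*e_q)=0,$$
since in both terms the output is a scalar multiple of the same vector $\Omega$.

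The second (and main) step is to recognize this identity as the defining property of $\mf g^*\subset\gl(n|n)$ with respect to the odd form $(\cdot,\cdot)$. Recall that $\mf p(n)$ is precisely the subalgebra of $\gl(n|n)$ consisting of $X$ that \emph{preserve} the form, i.e.\ satisfy $(Xv,w)+(-1)^{\ov X\ov v}(v,Xw)=0$. The complement $\mf g^*$ appearing in the decomposition $\gl(n|n)=\mf g\oplus\mf g^*$ of Section 2 is by construction the \textbf{super-symmetric} (``self-adjoint'') part of $\gl(n|n)$ with respect to $(\cdot,\cdot)$, that is, elements $X$ satisfying
$$(Xv,w)=(-1)^{\ov X\ov v}(v,Xw).$$
Specialising to $X=x_i^*$ (so $\ov X=\ov{x_i^*}=\ov{x_i}$) and $v=e_p$, $w=e_q$ yields exactly the required identity.

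If one prefers a direct verification to the abstract one, it suffices to check the self-adjointness case-by-case on the explicit basis $\{E_{st}^*,\,X_{st}^*,\,X_{ss}^*,\,Y_{st}^*\}$ of $\mf g^*$ given in Section 2. Using the matrix realisation and the definition $(e_a,e_b)=\delta_{a,\ov b}$, each of the four families reduces to a short, purely combinatorial check of equal Kronecker-delta expressions (entirely analogous to the case analysis carried out in the proof of Lemma \ref{lem::Eq::Cal::VW82}).

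I expect the conceptual step above to be essentially routine; the only mild obstacle is bookkeeping of the sign $(-1)^{\ov{x_i}\ov{e_p}}$ relative to the sign that appears when one uses super-symmetry of the form. The key point to watch is that the nonvanishing of either $(x_i^*e_p,e_q)$ or $(e_p,x_i^*e_q)$ already forces $\ov{x_i}+\ov{e_p}+\ov{e_q}\equiv\ov 1\pmod 2$ (because the form is odd), so the two sign conventions $(-1)^{\ov{x_i}\ov{e_p}}$ and $(-1)^{\ov{x_i}\ov{e_q}}$ differ only on the zero locus and the identity holds without ambiguity.
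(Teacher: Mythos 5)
Your proof is correct, and it takes a genuinely different (and more conceptual) route than the paper. The paper proves this lemma by a ten-case direct computation: it plugs each explicit dual basis element $E_{ss}^*, E_{ts}^*, X_{st}^*, X_{ss}^*, Y_{st}^*$ into \eqref{Eq::Cal::VW81} and evaluates $\vep$ on the resulting vectors one by one. You instead use the rank-one structure of $\vep$ from Lemma \ref{FormulaForelemente} (which extends bilinearly to $\vep(v\otimes w)=(v,w)\,\Omega$) to reduce the whole statement to the single scalar identity $(x_i^*e_p,e_q)=(-1)^{\ov{x_i}\,\ov{e_p}}(e_p,x_i^*e_q)$, and then recognize this as self-adjointness of $\mf g^*$ with respect to the odd form. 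One small caveat: the paper does not \emph{define} $\mf g^*$ as the self-adjoint part --- it gives $\mf g^*$ by the explicit matrix shape $\left(\begin{smallmatrix} A & J\\ K & A^t\end{smallmatrix}\right)$ with $J$ skew-symmetric and $K$ symmetric --- so your argument still owes a (one-line per block) verification that this matrix shape is exactly the super-self-adjoint condition $(Xv,w)=(-1)^{\ov X\ov v}(v,Xw)$; this is the mirror image of the check that $\mf p(n)$ preserves the form ($D=-A^t$, $U$ symmetric, $L$ skew) and is immediate. With that supplied, your case analysis is shorter and sign-safer than the paper's, since it involves only matrix entries and Kronecker deltas rather than repeated evaluations of $\vep$, and it explains \emph{why} the lemma is true: it is precisely the statement that the $\mf g^*$-component of the Casimir-like element acts self-adjointly on $V$. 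Your closing remark about the two sign conventions is not actually needed here, since the identity as stated already uses the sign $(-1)^{\ov{x_i}\ov{e_p}}$ matching the self-adjointness convention with $v=e_p$.
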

\begin{proof} Recall that $\vep(e_p\otimes e_q) =0$ if $p\neq \ov q$. Therefore, the proof reduces to the following calculations:
\begin{itemize}
  \item[(1)] Set $s\in I^0$ and $x_i^*:=E_{ss}^*$ in \eqref{Eq::Cal::VW81}, and assume that $p=s$, $q=\ov j$ for some $j\in I^0$ with $j\neq s$. Then we obtain  $$ \frac{1}{2}\vep\left( e_s\otimes e_{\ov j} \right)= 0.$$
  \item[(2)] Set $s\in I^0$ and $x_i^*:=E_{ss}^*$ in \eqref{Eq::Cal::VW81}, and assume that $p=j$, $q=\ov s$ for some $j\in I^0$ with $j\neq s$. Then we obtain  $$ \frac{1}{2}\vep\left( e_j\otimes e_{\ov s} \right)= 0.$$
  \item[(3)] Set $s\in I^0$ and $x_i^*:=E_{ss}^*$ in \eqref{Eq::Cal::VW81}, and assume that $p=s$, $q=\ov s$ (resp. $p=\ov  s$, $q=s$). Then we obtain  $$\frac{1}{2} \vep\left( e_s\otimes e_{\ov s} -   e_s\otimes e_{\ov s} \right)= \vep(0) = 0.$$  $$(\text{resp. } \vep\left( e_{\ov s}\otimes e_{ s} -   e_{\ov s}\otimes e_{s} \right)= \vep(0) = 0)$$
  \item[(4)] Set $s,t\in I^0$, $s\neq t$ and $x_i^*:=E_{ts}^*$ in \eqref{Eq::Cal::VW81}, and assume that $p=t$, $q=\ov j$. Then we obtain  $$ \frac{1}{2}\vep\left( e_s\otimes e_{\ov j} -  (\delta_{j,s} e_t\otimes e_{\ov t}) \right)= 0.$$
   \item[(5)] Set $s,t\in I^0$, $s\neq t$ and $x_i^*:=E_{ts}^*$ in \eqref{Eq::Cal::VW81}, and assume that $p=\ov s$, $q= j$. Then we obtain  $$\frac{1}{2} \vep\left( e_{\ov t}\otimes e_{j} -  (\delta_{j,t} e_{\ov s}\otimes e_{s}) \right)= 0.$$
   \item[(6)] Set $s,t\in I^0$, $s\neq t$ and $x_i^*:=E_{ts}^*$ in \eqref{Eq::Cal::VW81}, and assume that $p= j$, $q= \ov s$. Then we obtain  $$ \frac{1}{2}\vep\left( \delta_{j,t}e_{s}\otimes e_{\ov s} -  e_{ j}\otimes e_{\ov t} \right)= 0.$$
   \item[(7)] Set $s,t\in I^0$, $s\neq t$ and $x_i^*:=E_{ts}^*$ in \eqref{Eq::Cal::VW81}, and assume that $p= \ov j$, $q= t$. Then we obtain  $$ \frac{1}{2}\vep\left( \delta_{j,s}e_{\ov t}\otimes e_{t} -  e_{\ov j}\otimes e_{s} \right)= 0.$$
   \item[(8)] Set $s,t\in I^0$, $s\neq t$ and $x_i^*:=Y_{st}^*$ in \eqref{Eq::Cal::VW81}, and assume that $p, q \in I\backslash I^0$. Then we obtain  \begin{align*} & -\frac{1}{2}\left(\vep\left( \delta_{\ov p, t}e_{s}\otimes e_{q} - (-1) e_{ p}\otimes \delta_{\ov q,t} e_{s} \right) -\vep\left(\delta_{\ov p, s}e_t\otimes e_q -(-1)e_p\otimes \delta_{\ov q, s}e_t \right)\right) \\ &= -\frac{1}{2}\left(\delta_{\ov p,s}\delta_{\ov q,t} - \delta_{t,\ov q}\delta_{s, \ov p}\right)\sum_{k\in I}(-1)^{\ov{e_k}}e_k \otimes e_{\ov k} = 0.\end{align*}
   \item[(9)] Set $s,t\in I^0$, $s\neq t$ and $x_i^*:=X_{st}^*$ in \eqref{Eq::Cal::VW81}, and assume that $p, q \in I^0$. Then we obtain  \begin{align*} & -\frac{1}{2}\left(\vep\left( \delta_{p, t}e_{\ov s}\otimes e_{q} - e_{p}\otimes \delta_{q,t} e_{\ov s} \right) +\vep\left(\delta_{p, s}e_{\ov t}\otimes e_q -e_p\otimes \delta_{q, s}e_{\ov t} \right)\right) \\ &= -\frac{1}{2}\left(\delta_{q,s}\delta_{p,t} - \delta_{s,p}\delta_{t, q}+ \delta_{p,s}\delta_{q,t} - \delta_{p,t}\delta_{q,s}\right)\sum_{k\in I}(-1)^{\ov{e_k}}e_k \otimes e_{\ov k} = 0.\end{align*}
   \item[(10)] Set $s\in I^0$ and $x_i^*:=X_{ss}^*$ in \eqref{Eq::Cal::VW81}, and assume that $p, q \in I^0$. Then we obtain  \begin{align*} &-\vep\left( \delta_{p, s}e_{\ov p}\otimes e_{q} - \delta_{q,s}e_{p}\otimes e_{\ov q} \right) =-\left(\delta_{p,s}\delta_{p,q} - \delta_{q,s}\delta_{p, q}\right)\sum_{k\in I}(-1)^{\ov{e_k}}e_k \otimes e_{\ov k} = 0.\end{align*}

\end{itemize}
\end{proof}

\begin{lemma} \emph{(}Equation  $(P.8)$\emph{(}b\emph{)}\emph{)}
  $(y_a- y_{a+1})\vep_a =  -\vep_a$, for all $1\leq a\leq d-1$.
\end{lemma}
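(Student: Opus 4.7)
The plan is to mirror the preceding proof of (P.8)(a), with Lemma~\ref{lem::Eq::Cal::VW81} now playing the role that Lemma~\ref{lem::Eq::Cal::VW82} played there. Under the right-action convention of Theorem~\ref{thm:first}, proving the $\Pd$-relation $(y_a - y_{a+1})\vep_a = -\vep_a$ amounts to verifying the operator identity $\vep_a \circ (y_a - y_{a+1}) = -\vep_a$ on $M \otimes V^{\otimes d}$. I would verify this by applying it to a homogeneous element $m \otimes e_i \otimes e_j \otimes v$ with $m \in M \otimes V^{\otimes a-1}$, $v \in V^{\otimes d-a-1}$ and $i, j \in I$, using the decomposition $y_a - y_{a+1} = \sum_{k=0}^{a-1}(\Omega_{k,a} - \Omega_{k,a+1}) - \Omega_{a,a+1}$ coming from \eqref{def::PolynomialVars}.

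The first step is to show that each of the differences $\Omega_{k,a} - \Omega_{k,a+1}$ for $0 \leq k \leq a-1$ is annihilated by $\vep_a$ from the left. Indeed, $\Omega_{k,a}$ moves $x_q$ to position $k$ and places $x_q^*$ on $e_i$ at position $a$ (leaving $e_j$ untouched), whereas $\Omega_{k,a+1}$ does the same modification at position $k$ but places $x_q^*$ on $e_j$ at position $a+1$. A direct comparison of the sign conventions for the two $\Omega$'s shows that they differ by precisely $(-1)^{\ov{x_q}\ov{e_i}}$, so positions $a, a+1$ in the difference $(\Omega_{k,a} - \Omega_{k,a+1})(m \otimes e_i \otimes e_j \otimes v)$ carry the expression
\[
x_q^* e_i \otimes e_j - (-1)^{\ov{x_q}\ov{e_i}}\, e_i \otimes x_q^* e_j,
\]
which is annihilated by $\vep$ thanks to Lemma~\ref{lem::Eq::Cal::VW81}. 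Summing over $q$ and over $k = 0,\ldots,a-1$, the entire sum collapses, and we are left with $\vep_a \circ (y_a - y_{a+1}) = -\vep_a \circ \Omega_{a,a+1}$.

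For the surviving term, $\Omega_{a,a+1}$ acts on positions $a, a+1$ exactly as $2 C_{V,V}$, which equals $\vep + s$ by the definition \eqref{def::vep}. Thus $\Omega_{a,a+1} = \vep_a + s_a$ as operators on those two factors, and
\[
\vep_a \circ \Omega_{a,a+1} = \vep_a^2 + \vep_a \circ s_a = 0 + \vep_a = \vep_a,
\]
invoking $\vep_a^2 = 0$ from (P.3) and the operator form $\vep_a \circ s_a = \vep_a$ of the relation $s_a \vep_a = \vep_a$ in (P.6)(a), whose operator-level validity was already established in Proposition~\ref{not4}. Combining the two steps gives $\vep_a \circ (y_a - y_{a+1}) = -\vep_a$, as desired. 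The principal obstacle is the parity bookkeeping in the first step---one has to confirm that the sign difference between $\Omega_{k,a}$ and $\Omega_{k,a+1}$ matches the $(-1)^{\ov{x_q}\ov{e_i}}$ appearing in Lemma~\ref{lem::Eq::Cal::VW81}; once that parity check is in place, the remainder is a short reduction essentially identical in spirit to the proof of (P.8)(a).
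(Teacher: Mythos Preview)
Your proposal is correct and follows essentially the same route as the paper's proof. The only cosmetic difference is that the paper treats $m \in M \otimes V^{\otimes a-1}$ as a single module element and writes $y_a - y_{a+1}$ as three terms (two coming from $x_q$ landing on $m$ and one from $\Omega_{a,a+1}$), whereas you unpack the first two terms into the individual differences $\Omega_{k,a} - \Omega_{k,a+1}$ for $0 \le k \le a-1$; in both cases Lemma~\ref{lem::Eq::Cal::VW81} kills the non-local part and the identity $\vep \circ 2C_{V,V} = \vep \circ (\vep + s) = \vep$ handles the surviving $\Omega_{a,a+1}$-term.
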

\begin{proof}
 Let $m\in M\otimes V^{\otimes a-1}$ and $v\in V^{\otimes d-a-1}$ be homogenous elements. Then for all $i,j \in I$ we have the following calculation.
   \begin{align}
   &\big(\vep_a(y_a- y_{a+1})\big)(m\otimes e_i \otimes e_j \otimes v)  \\ &=2\vep_a( \sum_{k=1}^{\ell} x_k m\otimes (-1)^{\ov{x_k}\ov m} x_k^*e_i \otimes e_j \otimes v - \sum_{k=1}^{\ell} (-1)^{\ov{x_k}(\ov m+\ov{e_i})}x_k m\otimes e_i \otimes x_k^*e_j \otimes v  \\ &-\sum_{k=1}^{\ell} (-1)^{\ov{x_k}\ov{e_i}} m\otimes x_ke_i \otimes x_k^*e_j \otimes v) \\ & =-2\vep_a \left(\sum_{k=1}^{\ell} (-1)^{\ov{x_k}\ov{e_i}} m\otimes x_ke_i \otimes x_k^*e_j \otimes v\right).
   \end{align} The last equation follows from Lemma \ref{lem::Eq::Cal::VW81}. We now observe that the last term above can be rewritten as follows:
   \begin{align}
   &-2\vep_a \left(\sum_{k=1}^{\ell} (-1)^{\ov{x_k}\ov{e_i}} m\otimes x_ke_i \otimes x_k^*e_j \otimes v\right) = - m\otimes  \vep\big(2 C_{V,V}(e_i \otimes e_j)\big) \otimes v.
   \end{align} By definition of $\vep$ in \eqref{def::vep} and $(P.6)$(a), we have $\vep\circ 2C_{V,V} = \vep \circ (\vep +s) =\vep \circ s = \vep$ in $\text{End}_{\G}(V^{\otimes 2})$. This completes the proof.
\end{proof}

\vspace{5mm}


\end{document}